\DeclareMathAlphabet{\mathcal}{OMS}{cmsy}{m}{n}
\newtheorem*{thma}{Theorem~A}
\newtheorem*{thmb}{Theorem~B}
\newtheorem*{cora}{Corollary~A}
\newtheorem{thm}{Theorem}[section]
\newtheorem{lemma}[thm]{Lemma}
\newtheorem{cor}[thm]{Corollary}
\newtheorem{fact}[thm]{Fact}
\newtheorem{claim}[thm]{Claim}
\newtheorem{subclaim}[thm]{Subclaim}
\newtheorem{corollary}[thm]{Corollary} 
\theoremstyle{definition}
\newtheorem{definition}[thm]{Definition}
\newtheorem*{convention}{Convention} 
\theoremstyle{remark}
\newtheorem{remark}[thm]{Remark}  
\DeclareMathOperator{\otp}{otp}
\DeclareMathOperator{\crit}{crit}
\DeclareMathOperator{\rud}{rud}
\DeclareMathOperator{\pred}{pred}
\newcommand{\cM}{\mathcal{M}}
\newcommand{\cN}{\mathcal{N}}
\newcommand{\cQ}{\mathcal{Q}}
\newcommand{\Uu}{\mathcal{U}}
\newcommand{\Ult}{Ult}
\newcommand{\lh}{lh}
\newcommand{\dirlim}{dirlim}
\newcommand{\ran}{ran}
\newcommand{\cW}{\mathcal{W}}
\DeclareMathOperator{\h}{ht}
\newcommand{\ord}{OR}
\newcommand{\gch}{GCH}
\newcommand{\zf}{ZF}
\newcommand{\zfc}{ZFC}
\newcommand{\Tt}{\mathcal{T}}
\newcommand{\cK}{\mathcal{K}}
\DeclareMathOperator{\cf}{cf}
\DeclareMathOperator{\dom}{dom}
\title{Tall Cardinals in Extender Models }
\author{Gabriel Fernandes$^{*}$}
\address{Department of Mathematics, Bar-Ilan University, Ramat-Gan 5290002, Israel.}
\urladdr{http://u.math.biu.ac.il/$\sim$zanettg}
\email{zanettg@macs.biu.ac.il}
\thanks{$^{*}$The author is funded by the European Research Council (grant
	agreement ERC-2018-StG 802756) as a postdoctoral fellow at Bar-Ilan
	University.}
\author{Ralf Schindler$^{\ddagger}$}
\address{Institut f\"{u}r mathematische Logik und Grundlagenforschung\\Universit\"{a}t M\"{u}nster\\Einsteinstr.\ 62, 48149\\M\"{u}nster, Germany}
\email{rds@math.uni-muenster.de} 
\urladdr{https://ivv5hpp.uni-muenster.de/u/rds/}
\thanks{$^{\ddagger}$The author is funded by the Deutsche Forschungsgemeinschaft (DFG, German Research Foundation) under
	Germany's Excellence Strategy EXC 2044 390685587, Mathematics M\"unster: Dynamics - Geometry
	- Structure.}
\subjclass[2010]{Primary 03E55. Secondary 03E45.}
\keywords{Tall cardinals, Strong cardinals, Extender models, Core model}
\begin{document}

\maketitle

\begin{abstract}  Assuming that there is no inner model with a Woodin cardinal, we obtain a characterization of $\lambda$-tall cardinals in extender models that are iterable. In particular we prove that in such extender models, a cardinal $\kappa$ is a tall cardinal if and only if it is either a strong cardinal or a measurable
	limit of strong cardinals.
\end{abstract}


\section{Introduction}
\sectionmark{Equivalence}

Tall cardinals appeared in varying contexts as hypotheses in the work of Woodin and Gitik but they were only named as a distinct type of large cardinal by Hamkins in \cite{tall}. 

\begin{definition} Let $\alpha$ be an ordinal and $\kappa$ a cardinal. We say that $\kappa$ is \emph{$\alpha$-tall} iff there is an elementary embedding $j:V\rightarrow M$ such that the following holds:
	\begin{enumerate}
		\item[a)] $\crit(j)=\kappa$,
		\item[b)]  $j(\kappa) > \alpha$,
		\item[c)]  $^{\kappa}M \subseteq M$.
	\end{enumerate}
	We say that $\kappa$ is a \emph{tall cardinal} iff $\kappa$ is $\alpha$-tall for every ordinal $\alpha$.
\end{definition}

One can compare this notion with that of strong cardinals. 

\begin{definition}
	Let $\alpha$ be an ordinal and $\kappa$ a cardinal. We say that $\kappa$ is \emph{$\alpha$-strong}  iff there is an elementary embedding $j:V\rightarrow M$ such that the following holds:
	\begin{enumerate}
		\item[a)]  $\crit(j)=\kappa$,
		\item[b)]  $j(\kappa) > \alpha$,
		\item[c)]  $V_{\alpha} \subseteq M$.
	\end{enumerate}
	We say that $\kappa$ is a \emph{strong cardinal} iff $\kappa$ is $\alpha$-strong for every ordinal $\alpha$.
\end{definition}

In this paper, working under the hypothesis that there is no inner model with a Woodin cardinal, we present a characterization of $\lambda$-tall cardinals in `extender models' (see Definition \ref{DefExtMod}) that are `self-iterable' (see Definition  \ref{DefSelfIt}).

Given a cardinal $\kappa$, if $\kappa$ is $\alpha$-strong then $\kappa$ is $\alpha$-tall, and the existence of a strong cardinal is equiconsistent with the existence of a tall cardinal (see \cite{tall}). We will prove that the following equivalence holds in extender models:

\begin{cora}\label{cora} Suppose that there is no inner model with a Woodin cardinal, $V$ is an extender model of the form $L[E]$ which is iterable. Then given a cardinal $\kappa$ the following equivalence holds: $\kappa$ is a tall cardinal iff $\kappa$ is either a strong cardinal or  a measurable limit of strong cardinals.
\end{cora}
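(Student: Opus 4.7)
My plan is to treat the two directions of the equivalence separately.

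For the $(\Leftarrow)$ direction, if $\kappa$ is strong then $\kappa$ is tall, since any $\alpha$-strongness witness $j \colon V \to M$ with $\alpha \geq \kappa$ satisfies $V_\alpha \subseteq M$ and therefore $^\kappa M \subseteq M$. If $\kappa$ is a measurable limit of strong cardinals then $\kappa$ is tall by a theorem of Hamkins \cite{tall}, which is a \zfc\ result and independent of the extender-model setting; morally, for $\lambda < j_U(\kappa)$ one composes the ultrapower $j_U$ by a normal measure $U$ on $\kappa$ with an $M$-internal strongness witness for some $\delta$ with $\max(\kappa,\lambda) < \delta < j_U(\kappa)$ (extracted by elementarity from the fact that $\kappa$ is a limit of strong cardinals), and for larger $\lambda$ one iterates or upgrades to extenders in the standard fashion.

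For the $(\Rightarrow)$ direction, the hypothesis that $\kappa$ is tall already implies $\kappa$ is measurable: the ultrafilter $U = \{X \subseteq \kappa : \kappa \in j(X)\}$ derived from any tallness witness $j$ is $\kappa$-complete and nonprincipal. It therefore suffices to show that if $\kappa$ is tall in $V = L[E]$ but not strong, then $\kappa$ is a limit of strong cardinals. I plan to deduce this from the main characterization theorems of the paper (Theorems A--C), which provide a $\lambda$-by-$\lambda$ fine-structural description of tallness in $L[E]$. From these, one expects a dichotomy: for each $\lambda$, the $\lambda$-tallness of $\kappa$ is witnessed either by an extender on the $E$-sequence of critical point $\kappa$ that already gives $\lambda$-strongness of $\kappa$, or by a configuration requiring $V$-strong cardinals cofinally below $\kappa$. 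Since $\kappa$ is not strong, the first alternative must fail for cofinally many $\lambda$; hence the second alternative holds cofinally, producing strong cardinals unboundedly below $\kappa$, and $\kappa$ is a measurable limit of strong cardinals.

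The main obstacle is the underlying iteration-theoretic analysis that justifies the dichotomy: one must show that an arbitrary elementary $j \colon V \to M$ with $\crit(j) = \kappa$, $j(\kappa) > \lambda$, and $^\kappa M \subseteq M$ is induced, in a controlled way, by extenders drawn from the $E$-sequence. This is precisely the content of Theorems A--C and relies on comparison of iterable premice, the Mitchell--Steel fine structure of $L[E]$, and the covering lemma for the core model $K$ available under the hypothesis that no inner model contains a Woodin cardinal. Granting the main theorems, the corollary follows from the case analysis above, and is essentially a global packaging of the $\lambda$-by-$\lambda$ characterization proven as the principal results of the paper.
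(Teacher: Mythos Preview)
Your overall strategy matches the paper's: derive Corollary~A from the level-by-level Theorem~A (together with Hamkins's results for the $(\Leftarrow)$ direction). However, your description of the dichotomy in Theorem~A is not quite right, and this creates a genuine gap in the $(\Rightarrow)$ direction.

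Theorem~A's alternatives, for a fixed admissible $\mu$, are (i) $o(\kappa) > \mu$, or (ii) $o(\kappa) > \kappa^{+}$ and $\sup\{\nu < \kappa : o(\nu) > \mu\} = \kappa$. Your paraphrase of (ii) as ``a configuration requiring $V$-strong cardinals cofinally below $\kappa$'' is too strong: alternative (ii) only gives, for that particular $\mu$, unboundedly many $\nu < \kappa$ with $o(\nu) > \mu$. Knowing this for each $\mu$ separately does not automatically produce a single $\nu$ with $o(\nu) > \mu$ for \emph{all} $\mu$. Your sentence ``the second alternative holds cofinally, producing strong cardinals unboundedly below $\kappa$'' skips exactly this step.

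The missing argument is a use of Replacement, as in the proof of Corollary~\ref{maincor}: assuming $\kappa$ is tall but not strong, $o(\kappa)$ is a set-sized ordinal, and likewise for every non-strong $\nu < \kappa$. Since there are at most $\kappa$ many such $\nu$, the ordinal $\alpha^{*} := \sup\{o(\nu) : \nu < \kappa,\ \nu \text{ not strong}\}$ exists. Now choose a single regular $\mu > \max(\alpha^{*}, o(\kappa))$ meeting the stability hypothesis of Theorem~A. Alternative (i) fails, so (ii) gives cofinally many $\nu < \kappa$ with $o(\nu) > \mu > \alpha^{*}$; any such $\nu$ must be strong. This is the step you should make explicit. (Both you and the paper are tacitly using the standard fact that, in an iterable extender model below a Woodin, $\nu$ is strong iff $\{\beta : \crit(E_{\beta}) = \nu\}$ is a proper class; one direction is Lemma~\ref{TallMeas}, and the other is folklore.)
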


\begin{remark}
	In contrast to Corollary A,  Hamkins in  \cite[Theorem~4.1]{tall} adapted  Magidor's results in \cite{MR429566} to prove that it is consistent (assuming the consistency of $\zfc$ plus a strong cardinal) that there is a model of  $\zfc$ where there exists  a cardinal $\kappa$ such that $\kappa$ is a tall cardinal  which is neither a strong cardinal nor a limit of strong cardinals. Hence the equivalence from Corollary A does not hold in such model. 
\end{remark}

An extender\footnote{For an introduction to the theory of extender we recommend \cite{Kana}.} is a means of encoding elementary embeddings of models of (fragments of) $\zfc$ in a set-size object. There are various ways to represent extenders.  
Extenders are a generalization of measures, in particular, notions such as a `critical point' which are used in the context of measures can also be used when talking about extenders.

Extender models are a generalization of G\"odel's constructible universe that can accommodate large cardinals. In general, given a predicate $E$, which can be a set or a proper class, $L[E]$ is the smallest inner model\footnote{An inner model is a transitive proper class that models $\zf$.} closed under the operation $x \mapsto E \cap x$. Inner models of the form $L[E]$ can be stratified using the $J$-hierarchy:
\begin{itemize}
	\item $J_{\emptyset}^{E}=\emptyset$,
	\item $J_{\alpha+1}^{E}:= \rud_{E}(\{J_{\alpha}^{E}\}\cup J_{\alpha}^{E})$,
	\item $J_{\gamma}^{E}:= \bigcup_{\xi <\gamma}J_{\xi}^{E}$ for $\gamma$ a limit ordinal,
	\item $L[E] = \bigcup_{\xi \in \ord} J_{\xi}^{E}$
\end{itemize}

where $\rud_{E}$ is the closure under rudimentary functions\footnote{See \cite{MR2768688} for the definition of $\rud_{E}$.} and the function $x \mapsto E\cap x$.

We are interested in the special case where $E$ is such that  $E:\ord \rightarrow V$ and for every ordinal $\alpha$, either $E_{\alpha}=\emptyset$ or $E_{\alpha}$ is a partial extender (see Definition \ref{DefExtMod}). That is, $E$ is a `sequence of extenders'.

\begin{convention}
There are different ways of organizing sequences of extenders, we will use Jensen's $\lambda$-indexing (see Remark \ref{Indexing}).	
\end{convention}

\begin{definition}\label{DefO} Suppose $V$ is an extender model of the form $L[E]$. Given a cardinal $\kappa$ we define\footnote{Note that our definitions of $O(\kappa)$ and $o(\kappa)$  are not standard, because we do not only consider extenders which are total, but also consider partial extenders. For this reason, our definitions differ from those in other references such as \cite{MR1876087} and \cite{MR1423421}.} $  o(\kappa) := \text{otp}(\{ \beta \mid crit(E_{\beta}) = \kappa \})$ \index{$o(\kappa)$} and $O(\kappa) := sup\{ \beta \mid crit(E_{\beta}) = \kappa \}$.
\end{definition}

Our main result, Theorem A, is a level-by-level version of Corollary A. The statement of Theorem A uses the notion of $\mu$-stable premouse which is introduced in Definition \ref{stableDef}. 
 \begin{thma}\label{TallStrong} Suppose that there is no inner model with a Woodin cardinal and that the universe $V$ is an iterable extender model $L[E]$. Let $\kappa <  \mu$ be regular cardinals. Suppose further that $L[E]|\mu$ is $\mu$-stable above $\kappa$. \underline{Then} $  \kappa \ \text{is} \ \mu\text{-tall} $ iff \begin{gather*} o(\kappa) > \mu \\ \text{ or} \\ \Big( o(\kappa) > \kappa^{+} \wedge  sup\{ \nu < \kappa \mid  o(\nu) > \mu \} = \kappa  \Big) \end{gather*}
 \end{thma}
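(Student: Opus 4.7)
The plan is to prove the two implications of the biconditional separately.

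For the direction $(\Leftarrow)$, I split into the two disjuncts. If $o(\kappa) > \mu$, then I pick an index $\beta > \mu$ on the $E$-sequence with $\crit(E_\beta) = \kappa$. Under Jensen's $\lambda$-indexing the ultrapower map $j_{E_\beta}\colon L[E] \to \Ult(L[E], E_\beta)$ satisfies $\crit(j_{E_\beta}) = \kappa$ and $j_{E_\beta}(\kappa) = \beta > \mu$, and moreover $V_\beta^{L[E]} \subseteq \Ult(L[E], E_\beta)$, so that $\kappa$ is even $\mu$-strong in $L[E]$, and in particular $\mu$-tall. If instead the second disjunct holds, then each $\nu < \kappa$ with $o(\nu) > \mu$ is $\mu$-tall by the previous case, so $\kappa$ is a limit of $\mu$-tall cardinals; the condition $o(\kappa) > \kappa^+$ ensures that $\kappa$ is measurable in $L[E]$. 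A Hamkins-style argument (cf.\ \cite{tall}) then shows that a measurable limit of $\mu$-tall cardinals is $\mu$-tall.

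For the direction $(\Rightarrow)$, suppose $\kappa$ is $\mu$-tall in $L[E]$, witnessed by $j\colon L[E] \to M$ with $\crit(j) = \kappa$, $j(\kappa) > \mu$, and $^{\kappa}M \subseteq M$. I derive the $(\kappa, \mu)$-extender $F$ from $j$, form the phalanx $(L[E], \Ult(L[E], F), \kappa)$, and coiterate it against $L[E]$. Iterability of $L[E]$ together with the no-Woodin-cardinal hypothesis ensures that the comparison terminates, and the $\mu$-stability of $L[E]|\mu$ above $\kappa$ confines the $L[E]$-side iteration to indices below $\mu$. A case analysis on the first extender applied on the $L[E]$-side then yields the disjunction: if that extender has critical point $\kappa$, it must be some $E_\beta$ with $\beta > \mu$, whence $o(\kappa) > \mu$; otherwise only extenders with critical points strictly below $\kappa$ are applied, and to match the strength of $F$ up to $\mu$ their critical points must be cofinal in $\kappa$ and their indices must exceed $\mu$, yielding the cofinality condition of the second disjunct. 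The sharper lower bound $o(\kappa) > \kappa^+$, rather than mere measurability, then follows from a finer inspection of the normal measures on $\kappa$ derived from $j$.

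The main obstacle is the $(\Rightarrow)$ direction, and within it the coiteration argument that extracts combinatorial information about the $E$-sequence from an abstract embedding $j$. This step demands careful fine-structural bookkeeping and decisive use of both the no-Woodin-cardinal hypothesis, to ensure the standard premouse comparison is available and terminates, and $\mu$-stability above $\kappa$, to localize the $L[E]$-side iteration below $\mu$. Ruling out intermediate scenarios --- in which neither a single extender of critical point $\kappa$ nor cofinally many lower critical points with indices above $\mu$ suffice --- together with obtaining the sharp threshold $\kappa^+$ in the second disjunct, are the subtle quantitative points in the argument.
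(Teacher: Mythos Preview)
Your $(\Leftarrow)$ direction has the right shape but two inaccuracies: the extender $E_\beta$ you pick may be only partial on $L[E]$ (the paper's $o(\kappa)$ counts partial extenders too, see Definition~\ref{DefO}), and under $\lambda$-indexing $j_{E_\beta}(\kappa)=\lambda(E_\beta)$, not $\beta$. The paper does not argue this direction directly but simply cites Hamkins (Theorems~\ref{Hamkins} and~\ref{PropI}), so this is not where the work lies.

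The $(\Rightarrow)$ direction is where your proposal diverges substantially from the paper and where there is a genuine gap. The paper does \emph{not} form a phalanx or compare $\Ult(L[E],F)$ with $L[E]$. Instead it first invokes core model theory: under the hypotheses one has $V=\cK$ (Lemma~\ref{lemma}, Steel), and then Schindler's Theorem~\ref{VM} gives that $j$ itself is the branch embedding $\pi^{\Tt}_{0,\infty}$ of a normal tree $\Tt$ on $L[E]$ with $\cM^{\Tt}_\infty=M$; since $M^\omega\subseteq M$ this tree is finite (Lemma~\ref{finite}). The heart of the proof is then a contradiction argument: assuming $o(\kappa)\leq\mu$ and that $\{\nu<\kappa:o(\nu)>\mu\}$ is bounded, one defines ordinals $\beta^*<\kappa\leq\Theta\leq\mu$ and uses the ``gap'' Lemma~\ref{gap} to show inductively (Claim~\ref{bound}) that every extender index $\nu^{\Tt}_n$ used lies below $\mu$. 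Only \emph{then} does $\mu$-stability enter, via Lemmas~\ref{Lives} and~\ref{IterationBounds}, to conclude $j(\kappa)=\pi^{\Tt}_{0,\infty}(\kappa)\leq\mu$, a contradiction.

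Your sketch inverts this logic. You assert first that $\mu$-stability ``confines the $L[E]$-side iteration to indices below $\mu$'', and in the next sentence that the first extender with critical point $\kappa$ ``must be some $E_\beta$ with $\beta>\mu$''. These two claims are in tension, and neither is justified: $\mu$-stability by itself says nothing about where disagreements in a comparison occur, and in the paper the confinement below $\mu$ is the hard-won \emph{conclusion} (requiring the gap lemma together with the contradiction hypothesis $o(\kappa)\leq\mu$), not an input. A phalanx comparison might in principle be made to work, but since $F$ witnesses only tallness and not strongness, $\Ult(L[E],F)$ need not agree with $L[E]$ up to $\mu$, so the comparison is not easily localized; you would still need an analogue of the gap analysis to control the indices of the extenders used, and your outline gives no indication of how to carry this out.
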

 
\vskip 1cm

We prove Theorem A in Section 4. The rest of this introduction gives a technical overview of our proof of Theorem A. 

In order to prove Theorem A we will need some results from core model theory. Specifically, we shall need the core model $\mathcal{K}$ below a Woodin cardinal. This model is an extender model\footnote{See \cite{HODasK} for an example where $\mathcal{K}$ is not an extender model.} that generalizes the covering, absoluteness, and definability properties of $L$. The following result due to Jensen and Steel guarantees that such a model exists. 

\begin{thm} (\cite{Knm}) \label{Knm} There are $\Sigma_2$ formulae $\psi_{\cK}(v)$ and $\psi_{\Sigma}(v)$ such that, if there is no inner model with a Woodin cardinal, then
		\begin{enumerate}
			\item[(i)] $\cK=\{v \mid \psi_{\cK}(v) \}$ is an inner model satisfying $\zfc$,
			\item[(ii)] $\psi_{\cK}^{V}=\psi_{\cK}^{V[g]}$, and $\psi_{\Sigma}^{V}=\psi_{\Sigma}^{V[g]}\cap V$, whenever $g$ is $V$-generic over a poset of set size,
		    \item[(iii)] For every singular strong limit cardinal $\kappa$,  $\kappa^{+}=(\kappa^+)^{\cK}$, 
		   	\item[(iv)] $\{v \mid \psi_{\Sigma}(v) \}$ is an iteration strategy for $\cK$ for set-sized iteration trees, and moreover the unique such strategy,
		   	\item[(v)] $\cK|\omega_{1}$ is $\Sigma_{1}$ definable over $J_{\omega_{1}}(\mathbb{R})$.
	\end{enumerate}
\end{thm}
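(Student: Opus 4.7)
The plan is to follow Jensen--Steel's strategy in \cite{Knm}: first build an auxiliary, ``countably certified'' premouse $K^{c}$ by a maximal construction; prove $K^{c}$ is fully iterable using the no-Woodin hypothesis; establish weak covering for $K^{c}$; and finally extract $\mathcal{K}$ as the union of its stable cores. The formulas $\psi_{\mathcal{K}}$ and $\psi_{\Sigma}$ will be read off this construction, and each comes out $\Sigma_{2}$ because membership in the construction is witnessed by a single set (a level of $K^{c}$ together with its certificates), while the construction itself quantifies over all ordinals.

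First, define $K^{c}$ stage by stage. At stage $\alpha+1$, given $K^{c}|\alpha$, add the largest partial extender $F$ over $K^{c}|\alpha$ such that $(K^{c}|\alpha, F)$ is a premouse in Jensen's $\lambda$-indexing \emph{and} satisfies the \emph{countable certificate} condition: every countable elementary substructure $\pi:\bar{\mathcal{N}} \to (K^{c}|\alpha, F)$ extends to a genuine $V$-extender acting correctly on $\bar{\mathcal{N}}$. The certificates give countable iterability of $K^{c}$ essentially for free. To upgrade to full $(\omega,\omega_{1},\omega_{1}+1)^{*}$-iterability one runs Steel's $Q$-structure argument: if some iteration tree $\mathcal{T}$ on $K^{c}$ had no cofinal well-founded branch, then reflecting $\mathcal{T}$ to a countable hull and examining the $Q$-structures would define a Woodin cardinal inside some iterate, contradicting the hypothesis.

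Next, establish \emph{weak covering} for $K^{c}$: for every singular strong limit cardinal $\kappa$, $(\kappa^{+})^{K^{c}}=\kappa^{+}$. This is the technical heart of \cite{Knm} and proceeds by a Jensen-style hull-and-definability analysis. A failure of weak covering at $\kappa$ produces, via a $\Sigma_{n}$-hull construction at $\kappa$, a premouse projecting across $\kappa$ that cannot be the $\omega$-core of any level of $K^{c}$; this contradicts universality of $K^{c}$ and ultimately the no-Woodin hypothesis. Having weak covering, call $\mathcal{N}\trianglelefteq K^{c}$ \emph{stable} when its $\omega$-core is absolutely computed, and set
\[
\mathcal{K} := \bigcup \{\mathfrak{C}_{\omega}(\mathcal{N}) \mid \mathcal{N}\trianglelefteq K^{c} \text{ is stable}\}.
\]
Coherence of the extender sequence of the stable cores yields an inner model of $\zfc$, which is clause (i); weak covering of $K^{c}$ transfers to $\mathcal{K}$, giving (iii).

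Finally I would verify the remaining clauses. Set-generic absoluteness (ii) follows because any countable certificate available in $V[g]$ for a set in $V$ can be replaced by one in $V$, via a standard absoluteness argument through the L\'evy collapse; the same reasoning handles $\psi_{\Sigma}$. Uniqueness of the iteration strategy (iv) comes from branch uniqueness via $Q$-structures, so one may take $\psi_{\Sigma}(v)$ to say ``$v$ is the unique cofinal branch whose $Q$-structure is correct.'' Clause (v) is almost immediate: at the $\omega_{1}$ level both certificates and $Q$-structures are real numbers, so $\mathcal{K}|\omega_{1}$ has a $\Sigma_{1}$ definition over $J_{\omega_{1}}(\mathbb{R})$. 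The main obstacle is weak covering: everything else in the argument is, once the right notion of stability is in place, essentially packaging, but weak covering is where the no-inner-model-with-a-Woodin hypothesis is genuinely used and where the subtle hull analyses reside.
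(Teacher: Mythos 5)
This statement is not proved in the paper at all: it is Theorem~1.3, quoted as a black box from Jensen--Steel \cite{Knm}, and the surrounding text only ever uses it (via Lemmas \ref{Universal}, \ref{lemma}, etc.). So the only fair comparison is with the literature proof you are sketching, and there your proposal is a roadmap rather than a proof: countable-certificate $K^{c}$ constructions, iterability from $Q$-structures under ``no Woodin'', weak covering, and the extraction of the true $\cK$ are each the subject of long technical developments, and your sketch asserts them rather than establishes them. More importantly, the outline you give is essentially Steel's older construction from \emph{The Core Model Iterability Problem}, and it never engages with the actual content of \cite{Knm}: that construction needs a measurable cardinal $\Omega$ to get universality of $K^{c}$, the thick-class/hull-and-definability machinery, and weak covering below $\Omega$, and the whole point of Jensen--Steel is to remove that hypothesis (via their notions of stability, local ``stacking'' of mice over $\cK|\kappa$, and soundness witnesses such as the $(\Xi^{+},\kappa)$-stabilizations that the present paper borrows). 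A sketch that does not say how iterability, universality and weak covering are obtained \emph{without} the measurable is missing precisely the new idea the citation is for.

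There is also a concrete error in the extraction step. Levels of $K^{c}$ are already sound, so ``$\cK:=\bigcup\{\mathfrak{C}_{\omega}(\cN)\mid \cN\trianglelefteq K^{c} \text{ stable}\}$'' just yields an initial segment of $K^{c}$; but the true core model is not in general an initial segment of $K^{c}$, and with that definition clause (ii) would fail, because the $K^{c}$ construction (which extenders admit certificates) is not invariant under set forcing --- generic absoluteness is exactly what forces one to pass from $K^{c}$ to an invariantly characterized $\cK$, in \cite{Knm} via the local characterization of $\cK|\kappa^{+}$ as the stack of sound iterable mice projecting to $\kappa$ that compare into stabilized witnesses (this is what Lemma \ref{ISK} and Theorem \ref{SK} in the present paper reflect). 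Likewise clause (iii) does not simply ``transfer'' from $K^{c}$ to $\cK$; it requires the comparison/universality arguments relating $\cK$ to the $K^{c}$-like witnesses. So, as a proof of Theorem \ref{Knm} the proposal has genuine gaps; as a summary of \cite{Knm} it misdescribes both the definition of $\cK$ and the source of the theorem's difficulty.
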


We now describe our strategy for proving Theorem A.  One direction is due to Hamkins (see Theorem~\ref{Hamkins} and Theorem~\ref{PropI}), so we start from the assumption that $\kappa$ and $\mu$ are cardinals such that $\mu > \kappa$, $\kappa$ is $\mu$-tall and $j$ witnesses that $\kappa$ is $\mu$-tall. Note that this implies that $\kappa$ is measurable and that $\mu <j(\kappa)$. 

We now consider two cases. Either $\kappa$ is a limit of cardinals $\beta$ such that $o(\beta) > \mu$, in which case we get the second alternative of the direction we are proving using Lemma~\ref{TallMeas}. 

So, suppose that $\kappa$ is not a limit of cardinals $\beta$ such that $o(\beta) > \mu$, and then we work towards proving that $o(\kappa) > \mu$. 

As a first step, we combine Lemma \ref{lemma} and Theorem \ref{VM} to obtain that $j$ is an iteration map coming from an iteration tree $\Tt $ on $L[E]$ such that $j=\pi^{\Tt}_{0,\infty}$. This is Lemma~\ref{InducedIteration}. We will spend the main part of the proof of Theorem A analyzing the iteration tree $\Tt$.  

We shall prove that $o(\kappa) > \mu$ by contradiction. That is, we shall start with the assumption that $o(\kappa)\leq \mu$. Then, we will find $\Theta$ and $\beta^*$ such that $\beta^* < \kappa \leq \Theta \leq \mu$ and $\tau \in (\beta^{*},\Theta] $ implies $o(\tau) < \Theta$ (see Lemma \ref{gap}).  Using the upper bounds that we obtain in Section \ref{Section3} we shall finally prove that $j(\kappa)=\pi^{\Tt}_{0,\infty}(\kappa) \leq \Theta \leq \mu$, which will contradict the fact that $\mu < j(\kappa) =\pi^{\Tt}_{0,\infty}(\kappa)$.

The following is essentially a reformulation of Theorem A which we also prove in Section 4. 
\begin{thmb}\label{thmb} Suppose there is no inner model with a Woodin cardinal and $L[E]$ is an extender model that is self-iterable. Let $\kappa$, $\mu$ be ordinals such that $\kappa < \mu$ and $\mu$ is a regular cardinal. 
	If $L[E]|\mu$ is $\mu$-stable above $\kappa$, then $(\kappa \text{ is } \mu\text{-tall})^{L[E]}$ iff 
	\begin{equation}
	\begin{gathered} \label{EquivalenceEquation} (o(\kappa)) > \mu)^{L[E]} \\ \text{or} \\ (o(\kappa) > 0 \wedge \kappa = \sup\{\nu < \kappa \mid o(\nu)>\mu\})^{L[E]}
	\end{gathered}
	\end{equation}
	It follows that if $L[E]$ is weakly iterable and $L[E]|\mu$ is $\mu$-stable above $\kappa$, then $(\kappa \text{ is } \mu\text{-tall})^{L[E]}$ iff \eqref{EquivalenceEquation} holds.
\end{thmb}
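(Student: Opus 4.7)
The plan is to derive Theorem B by internalizing Theorem A inside $L[E]$. All the quantities appearing in the statement of Theorem B — tallness of $\kappa$, the function $o$, regularity of $\mu$, and $\mu$-stability — are computed inside $L[E]$, so the task reduces to verifying, from the point of view of $L[E]$, the hypotheses of Theorem A. The absence of an inner model with a Woodin cardinal transfers from $V$ to $L[E]$ because every inner model of $L[E]$ is an inner model of $V$. The self-iterability hypothesis, by Definition~\ref{DefSelfIt}, is precisely the statement that $L[E]$ believes itself to be iterable. Regularity of $\mu$ and the $\mu$-stability of $L[E]|\mu$ above $\kappa$ are given directly. Applying Theorem A inside $L[E]$ thus yields
\begin{equation*}
(\kappa \text{ is } \mu\text{-tall})^{L[E]} \iff (o(\kappa) > \mu)^{L[E]} \text{ or } (o(\kappa) > \kappa^{+} \wedge \kappa = \sup\{\nu < \kappa \mid o(\nu) > \mu\})^{L[E]}.
\end{equation*}

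The next step is to show that, inside $L[E]$ and under the limit hypothesis $\kappa = \sup\{\nu < \kappa \mid o(\nu) > \mu\}$, the conditions $o(\kappa) > \kappa^{+}$ and $o(\kappa) > 0$ are equivalent. One direction is trivial. For the other, if $\kappa$ is measurable in $L[E]$ and is a limit of $\nu$ with $o(\nu) > \mu > \kappa$, then any extender $E_\alpha$ on the $L[E]$-sequence with $\crit(E_\alpha) = \kappa$ yields a normal measure concentrating on $\{\nu < \kappa \mid o(\nu) > \mu\}$. In the corresponding ultrapower, $\kappa$ inherits many extenders with critical point $\kappa$ of varying lengths; by coherence of the $L[E]$-sequence together with the $\mu$-stability assumption, this forces $o(\kappa) > \kappa^{+}$ back in $L[E]$. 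This reconciles the two formulations.

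For the final sentence of the theorem, weak iterability suffices because the iteration strategy for $L[E]$ given by Theorem~\ref{Knm}(iv) is unique and, by Theorem~\ref{Knm}(v), sufficiently definable that its restriction to trees in $L[E]$ lies in $L[E]$; hence weak iterability upgrades to self-iterability, reducing to the case already treated. The main obstacle I anticipate is the upgrade from $o(\kappa) > 0$ to $o(\kappa) > \kappa^{+}$, which requires unpacking the fine-structural content of $\mu$-stability — a step where the technical apparatus introduced in Definition~\ref{stableDef} and the coherence structure of $L[E]$ come into play.
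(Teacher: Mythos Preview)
Your main strategy --- verify that $L[E]\models(\Delta)$ and then apply Theorem~A internally --- is exactly the paper's approach for the first part. However, two steps in your proposal do not go through as written.

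First, the passage from weak iterability to self-iterability. You appeal to Theorem~\ref{Knm}(iv),(v), but that theorem is about the core model $\mathcal{K}$, not about an arbitrary extender model $L[E]$. To invoke it you would first need $L[E]=\mathcal{K}$, which is Lemma~\ref{lemma}; but Lemma~\ref{lemma} already presupposes $(\Delta)$, i.e.\ full iterability of $L[E]$, so the reasoning is circular. The paper instead cites Lemma~\ref{Standard}, which proceeds by a reflection argument: take a countable elementary substructure containing a putative tree $\mathcal{T}\in L[E]$, use weak iterability to find the branch determined by the $\mathcal{Q}$-structure $\mathcal{Q}(\bar{\mathcal{T}})$ in the collapse, and pull the branch back by $\Sigma^1_1$-absoluteness and homogeneity of the collapse. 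That is the missing ingredient.

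Second, your reconciliation of $o(\kappa)>0$ with $o(\kappa)>\kappa^{+}$ is not a proof. The condition $o(\kappa)>0$ in the paper's (nonstandard) sense allows partial extenders, so it does not immediately yield a normal measure on $\kappa$; and even granting a measure, unboundedness of $\{\nu<\kappa\mid o(\nu)>\mu\}$ in $\kappa$ does not put that set into the measure. Your invocation of ``coherence together with $\mu$-stability'' is not an argument --- $\mu$-stability constrains the structure near $\mu$, not near $\kappa$. In fact the paper sidesteps this entirely: when Theorem~B is restated and proved in Section~4, the second disjunct reads $o(\kappa)>\kappa^{+}$, matching Theorem~A verbatim, and the proof is a one-line citation. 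The $o(\kappa)>0$ formulation in the introduction is a discrepancy the paper does not address.
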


\section{Preliminaries} 

In this section we summarize the notation that will be used in this paper. We follow closely the notation used in \cite{MR1876087}.

\begin{definition}
	We say that $\mathcal{M}:=\langle J_{\alpha}^{E}, \in, E\restriction \alpha, E_{\alpha} \rangle$ is an \emph{acceptable $J$-structure} iff $\mathcal{M}$ is a transitive amenable structure and for every $\xi < \alpha$ and $\tau < \alpha \omega$ if $(\mathcal{P}(\tau) \cap J_{\xi+1}^{E}) \setminus J_{\xi}^{E} \neq \emptyset$, then there is $f:\tau \rightarrow J_{\xi}^{E}$ surjective such that $f \in J_{\xi+1}^{E}$. 
\end{definition} 

Acceptablity is a strong form of $\gch$. We can define fine structure for acceptable $J$-structures. Let $\cM$ be an acceptable $J$-structure. We shall write (see \cite[Chapter 2]{MR1876087} ):

\begin{itemize}
	\item $\h(\cM)$ for the ordinal $\cM \cap \ord$,
	\item $\rho_{n}(\cM)$ for the $n$-th projectum of $\cM$,
	\item $P^{\cM}_{n}$ for the set of good parameters (i.e., for the set of parameters witnessing	$\rho_{n}(\cM)$ is the $n$-th projectum),
	\item $p^{\cM}_{n}$ for the $n$-th standard parameter of $\cM$ (i.e, the least element of $P^{\cM}_{n}$ where least refers to the canonical well-order of $[\ord]^{<\omega}$),
	\item $h^{n,p}_{\cM}$ for the canonical $\Sigma_{1}$ Skolem function of $\cM^{n,p}$,
	\item $\tilde{h}^{n}_{\cM}$ for the good uniformly $\Sigma^{(n-1)}_{1}(\cM)$ function with two parameters which is the result of iterated composition of the Skolem functions of the $i$-th reducts.	
\end{itemize}

\begin{definition}
	Let $\cM = \langle J_{\alpha}^{A},\in,F \rangle $ be an acceptable $J$-structure. We say that $\cM$ is a coherent $J$-structure iff there is an $\bar{\alpha} < \alpha$ \begin{itemize}
		\item $F$ is a whole extender\footnote{See \cite[p.42]{MR1876087} for the definition of extender and \cite[p. 53]{MR1876087} for the definition of whole extender.} in $J_{\bar{\alpha}}^{A}$ where $\bar{\alpha}< \alpha$,
			 \item $J_{\bar{\alpha}}^{A} \models ``\crit(F)$ is the largest cardinal''
			\item $J_{\alpha}^{A}=\Ult_{0}(J_{\bar{\alpha}}^{A},F)$.
	\end{itemize}
     
     Given $\beta < \alpha $ we define $\cM|\beta:= \langle J_{\beta}^{A}, \in, E\restriction \omega \beta \rangle$ and $\cM||\beta:= \langle J_{\beta}^{A}, \in, E\restriction \omega \beta, E_{\omega\beta} \rangle$\footnote{Depending on the reference $\cM||\beta$ and $\cM|\beta$ may have their roles switched. We stick to the notation in \cite{MR1876087}.}.
    
	We say that $\mathcal{M}:= \langle J_{\alpha}^{E},\in,E\restriction \omega \alpha, E_{\omega \alpha} \rangle $ is a premouse iff
	\begin{itemize}
		\item $E$ is a set of triples $\langle \nu,x,y \rangle$ for $\nu \leq \alpha$ such that, setting 
		$$E_{\omega \nu}:=\{\langle x,y \rangle \mid \langle \nu, x,y \rangle \in E \},$$ the structure $\mathcal{M}||\nu$ is coherent whenever $E_{\omega \nu} \neq \emptyset$.
		\item For every $\nu \leq \alpha$, if $E_{\omega \nu} \neq \emptyset$, then $E_{\omega \nu}$ is weakly amenable w.r.t. $\mathcal{M}||\nu$.
		\item $\mathcal{M}||\nu$ is sound for every $\nu < \alpha$.
	
	\end{itemize}
\end{definition}

\begin{remark}[Indexing] \label{Indexing} Notice that implicitly in our definition of a premouse we use $\lambda$-indexing, also called Jensen indexing, which means that  extenders are indexed  at the successor of the image of their critical point under the ultrapower map, i.e., if $\mathcal{M}$ is a premouse, $ E^{\mathcal{M}}_{\beta}\neq \emptyset$, $\mathcal{N} = Ult_{0}(\mathcal{M}||\beta,E_{\beta}^{\mathcal{M}})$ and $\pi_{E_{\beta}^{\cM}}:\cM||\beta \rightarrow \cN$ is the ultrapower map, then $\beta = \pi_{E_{\beta}^{\mathcal{M}}}(\crit(E_{\beta}^{\mathcal{M}}))^{+\mathcal{N}}$.
\end{remark}

\begin{definition}\label{lambda}
	Let $F$ be an extender over a premouse $\cM$. We denote by $\lambda(F)$ the image of the critical point of $F$ under the ultrapower map, i.e. if $\pi_{F}: \cM \rightarrow \Ult_{0}(\cM,F)$ is the ultrapower map, we let $\lambda(F) = \pi_{F}(\crit(F))$. 
\end{definition}

\begin{definition}\label{DefExtMod} We say that $L[E]$ is an \emph{extender model} iff $L[E]$ is a proper class premouse. 	
\end{definition}

\section{Upper bounds for the images of ordinals under iteration maps \label{Section3} }

In this section we define iteration trees\footnote{The reader interested in the intuition behind the definition of iteration trees is refered to  \cite{MR1224594}.} and prove general facts about upper bounds for the images of ordinals under iteration maps.

\begin{definition}
	A tree $T=\langle \theta, \leq_{T} \rangle $ on an ordinal $\theta$ is an \emph{iteration tree} iff 
	\begin{itemize}
		\item[a)] $0$ is the root of $T$ and each successor ordinal $\alpha < \theta$ has an immediate $T$-predecessor $\pred_{T}(\alpha) < \alpha$;
		\item[b)] if $\alpha < \theta$ is a limit ordinal, then $\alpha = sup\{\xi < \alpha \mid \xi <_{T} \alpha \}$
	\end{itemize}
If $T \subseteq \theta$ is an iteration tree, given $\alpha < \beta$ elements of $T$ we write 
$(\alpha,\beta]_{T}=\{\gamma \mid \alpha <_{T} \gamma \leq_{T} \beta \}$ and similarly for $(\alpha,\beta)_{T}$, $[\alpha,\beta]_{T}$.
\end{definition}

\begin{definition} \label{ItTrees} Let $\cM$ be a sound premouse and $\theta \in \ord$. We say that $\Tt$ is an \emph{iteration tree $\Tt$ on $\cM$ with $\lh(\Tt)=\theta$} iff $\Tt$ is a 6-tuple\footnote{When $\cM$ is a proper class premouse we allow $\eta_{\alpha}=\ord$, and formally we use $\eta_{\alpha}=\emptyset$.}:
	
	$$\mathcal{T}= \langle \langle \cM_{\alpha} \mid \alpha < \theta \rangle, \langle \nu_{\beta} \mid \beta+1 \in \theta \rangle, \langle \eta_{\beta} \mid \beta+1 < \theta \rangle, \langle \pi_{\alpha,\beta} \mid \alpha \leq_{T} \beta < \theta \rangle, D, T \rangle $$	

satisfying: 

\begin{enumerate}
	\item[(a)] $T$ is an iteration tree.
	\item[(b)] Each $\cM_{\alpha}$ is a premouse and $\cM_{0}=\cM$.
	\item[(c)] $\langle \pi_{\alpha,\beta} \mid \alpha \leq_{T} \beta \rangle $ is a commutative system of partial maps, where $\pi_{\alpha,\beta}:\cM_{\alpha} \rightarrow \cM_{\beta}$. 
	\item[(d)] Setting $\xi_{\alpha}=\pred_{T}(\alpha+1)$, we have $\eta_{\alpha} \leq ht(\cM_{\xi_{\alpha}})$ and for every $\beta < \theta$ there are only finitely many $\alpha$ such that  $\xi_{\alpha} <_{T} \beta $ and $\eta_{\alpha} < ht(\cM_{\xi_{\alpha}})$. 
	\item[(e)] $D\subseteq \theta$ and  $\alpha \in D $ iff $  \eta_{\alpha} < \h(\cM_{\xi_{\alpha}}) $.
    \item[(f)] If $\alpha+1 < \theta$, setting $\kappa_{\alpha}:= \crit(E_{\nu_{\alpha}}^{\cM_{\alpha}})$ and $\tau_{\alpha}:=\kappa_{\alpha}^{+\cM_{\alpha}||\nu_{\alpha}}$, we have $\tau_{\alpha}=\kappa_{\alpha}^{+\cM_{\xi_{\alpha}}||\eta_{\alpha}}$, $E^{\cM_{\alpha}}\restriction \tau_{\alpha}=E^{\cM_{\xi_{\alpha}}}\restriction \tau_{\alpha}$, and 
	$$\pi_{\xi_{\alpha},\alpha+1}: \cM_{\xi_{\alpha}}|| \eta_{\alpha} \longrightarrow_{E_{\alpha}}^{*} \Ult^{*}(\cM_{\xi_{\alpha}}||\eta_{\alpha} , E_{\nu_{\alpha}}^{\cM_{\alpha}} ).$$
	\item[(g)] If $\alpha < \theta$ is a limit ordinal, then $\langle M_{\alpha}, \pi_{\beta,\alpha} \mid \beta < \alpha \rangle $ is the direct limit of the diagram $\langle \cM_{\beta} , \pi_{\bar{\beta},\beta} \mid \bar{\beta} \leq_{T} \beta <_{T} \alpha \rangle $.
\end{enumerate}

Given an iteration $\Tt$, we denote the objects from the above definition related to $\Tt$ by $\cM_{\alpha}^{\Tt}$, $\nu_{\alpha}^{\Tt}$, $\eta_{\alpha}^{\Tt}$,$D^{\Tt}$,$T^{\Tt}$. We shall often write $T$ instead of $T^{\Tt}$. We also set:
\begin{itemize}
	\item $E^{\Tt}_{\alpha} := E^{\cM^{\Tt}_{\alpha}}_{\nu_{\alpha}}$ (the extender used to form $\cM_{\alpha+1}^{\Tt})$,
	\item $\kappa^{\Tt}_{\alpha} := \crit(E_{\alpha}^{\Tt})$,
	\item $\lambda_{\alpha}^{\Tt} := \lambda(E_{\alpha}^{\Tt})$,
	\item  $\tau_{\alpha}^{\Tt} := ((\kappa_{\alpha}^{\Tt})^{+})^{\cM_{\alpha}^{\Tt}||\nu_{\alpha}^{\Tt}}$ and 
	\item $D^{\Tt} := \{\alpha+1 \mid \eta_{\alpha}^{\Tt}< ht(\cM_{\xi_{\alpha}}^{\Tt})\}$.
\end{itemize}

If in addition $\lh(\Tt)=\gamma+1$ for some $\gamma \in \ord$, we write
\begin{itemize}
	\item  $\cM_{\infty}^{\Tt}$ for the last model  $\cM_{\gamma}^{\Tt}$ in the iteration tree $\Tt$,
	\item $[0,\gamma]_{T}$ is called the main branch of $\Tt$, and 
	\item if $\beta \in \lh(\Tt)$ and $\beta <_{T} \gamma$, we let $\pi_{\beta,\infty}^{\Tt}:=\pi_{\beta,\gamma}^{\Tt}$.
\end{itemize}

We say that $\Tt$ is a \emph{normal iteration tree} iff 

\begin{itemize}
	\item $\nu_{\beta} <\nu_{\alpha}$ whenever $\beta, \alpha \in \lh(\Tt)$ and $\beta < \alpha$;
	\item $\xi_{\alpha}=$ the least $\xi \in B$ such that $\kappa_{\alpha} < \lambda_{\xi}$;
	\item $\eta_{\alpha}$= the maximal $\eta \leq ht(M_{\xi_{\alpha}})$ such that $\tau_{\alpha}=\kappa_{\alpha}^{+\cM_{\xi_{\alpha}}^{\Tt}||\eta}$. 
\end{itemize}
\end{definition}

\begin{remark} We stress that the maps $\pi_{\bar{\beta},\beta}^{\Tt}$ are partial functions.
	\end{remark} 

\begin{fact}\label{fact1} If $\Tt$ is a normal iteration tree on a premouse $\cM$, then the inductive application of the coherency condition yields: 
	\begin{itemize}
		\item $\cM_{\alpha}^{\Tt}| \nu_{\beta}=\cM^{\Tt}_{\beta} |\nu_{\beta}$ whenever  $\beta \leq \alpha$.
		\item If  $\beta < \alpha$, then $\nu_{\beta}$ is a successor cardinal in $\cM_{\alpha}^{\Tt}$, but  not a cardinal in $\cM_{\beta}^{\Tt}$ when  $\nu_{\beta}^{\Tt} \in \cM_{\beta}^{\Tt}$.
	\end{itemize}
	\end{fact}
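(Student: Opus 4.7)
The plan is to prove both bullets by a joint induction on $\alpha$, leaning on two structural features of the setup: the normality condition, which forces the indices $\nu_\gamma$ to be strictly increasing in $\gamma$ together with the fact that $\xi_\gamma$ is the least $\xi$ with $\kappa_\gamma < \lambda_\xi$, and the Jensen-indexing coherency property built into the definition of a premouse, namely that if $F = E^{\mathcal{N}}_{\nu}$ is the active extender of a coherent $J$-structure $\mathcal{N}$, then the ultrapower $\Ult(\mathcal{N}, F)$ satisfies $\Ult(\mathcal{N},F)\,|\,\nu = \mathcal{N}\,|\,\nu$ and carries no extender at its index $\nu$.

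For the first bullet, fix $\beta$ and induct on $\alpha \geq \beta$. The base case $\alpha = \beta$ is trivial, and the case $\beta = \alpha+1$ at a successor step is also trivial; assume $\beta \leq \alpha$, so by IH we have $\mathcal{M}^{\Tt}_{\alpha}\,|\,\nu_\beta = \mathcal{M}^{\Tt}_{\beta}\,|\,\nu_\beta$ and it suffices to show $\mathcal{M}^{\Tt}_{\alpha+1}\,|\,\nu_\beta = \mathcal{M}^{\Tt}_{\alpha}\,|\,\nu_\beta$. Normality gives $\nu_\beta \leq \nu_\alpha$. Condition (f) of Definition~\ref{ItTrees} tells us that $\mathcal{M}^{\Tt}_{\xi_\alpha}\,|\,\tau_\alpha = \mathcal{M}^{\Tt}_{\alpha}\,|\,\tau_\alpha$, and combined with the IH applied to $\xi_\alpha$ this propagates the agreement up through the largest common initial segment used to form the ultrapower. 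Jensen coherency then yields $\mathcal{M}^{\Tt}_{\alpha+1}\,|\,\nu_\alpha = \mathcal{M}^{\Tt}_{\alpha}\,|\,\nu_\alpha$, and truncating to $\nu_\beta \leq \nu_\alpha$ finishes the step. At a limit $\alpha$, choose $\gamma$ on the main branch $[0,\alpha)_T$ with $\gamma > \beta$ and far enough along that every edge map $\pi^{\Tt}_{\xi_\rho,\rho+1}$ for $\rho+1 \in (\gamma,\alpha]_T$ has critical point $\kappa_\rho \geq \nu_\beta$; such $\gamma$ exists because, by the normality clause $\xi_\rho = $ least $\xi$ with $\kappa_\rho < \lambda_\xi$, once $\gamma$ exceeds $\beta$ only the first edge on $(\gamma, \alpha]_T$ can have $\xi_\rho = \gamma$ and only finitely many branch-nodes can sit below any fixed $\lambda_\xi$. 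Then $\pi^{\Tt}_{\gamma,\alpha}$ is the identity below $\nu_\beta$, so $\mathcal{M}^{\Tt}_{\alpha}\,|\,\nu_\beta = \mathcal{M}^{\Tt}_{\gamma}\,|\,\nu_\beta = \mathcal{M}^{\Tt}_{\beta}\,|\,\nu_\beta$ by IH.

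For the second bullet, the first bullet hands us $\mathcal{M}^{\Tt}_{\alpha}\,|\,\nu_\beta = \mathcal{M}^{\Tt}_{\beta}\,|\,\nu_\beta$ for $\beta < \alpha$. Since $E_\beta^{\Tt}$ is the active extender indexed at $\nu_\beta$ in $\mathcal{M}^{\Tt}_{\beta}$, by Jensen indexing $\nu_\beta = \lambda(E_\beta^{\Tt})^{+\,\mathcal{M}^{\Tt}_{\beta}||\nu_\beta}$ is a successor cardinal in the common initial segment $\mathcal{M}^{\Tt}_{\alpha}||\nu_\beta = \mathcal{M}^{\Tt}_{\beta}||\nu_\beta$. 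In $\mathcal{M}^{\Tt}_{\alpha}$ this status survives because, by coherency, the sequence of $\mathcal{M}^{\Tt}_{\alpha}$ carries no extender at index $\nu_\beta$, and acceptability together with the Jensen-indexing convention prevent any proper end-extension from collapsing $\nu_\beta$. In $\mathcal{M}^{\Tt}_{\beta}$, on the other hand, the predicate $E_\beta^{\Tt}$ at index $\nu_\beta$ is itself a surjection in $\mathcal{M}^{\Tt}_{\beta}||\nu_\beta + 1$ from $\lambda(E_\beta^{\Tt})$ onto $\nu_\beta$ (decoding the extender by seeds), so $\nu_\beta$ is collapsed inside $\mathcal{M}^{\Tt}_{\beta}$ whenever $\nu_\beta$ is actually an ordinal of $\mathcal{M}^{\Tt}_{\beta}$.

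I expect the main obstacle to be the limit step of the first bullet: one must verify carefully that the critical points along a cofinal branch of a normal tree eventually dominate any fixed $\nu_\beta$, which relies on the combined force of the normality stipulations for $\xi_\alpha$ and $\eta_\alpha$ and the finiteness clause in (d) that prevents infinite descent of predecessors below any fixed branch-node. Once this book-keeping is in place, the coherency of Jensen indexing makes the rest of the argument essentially automatic.
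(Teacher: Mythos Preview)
The paper does not give a proof of this fact; it simply records it as a standard consequence of ``the inductive application of the coherency condition.'' Your outline is exactly that standard argument and is correct in substance.

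One justification should be tightened. In the limit step of the first bullet, your appeal to condition~(d) and to ``only finitely many branch-nodes can sit below any fixed $\lambda_\xi$'' is not the operative mechanism. The clean reason the critical points along $[0,\alpha)_T$ eventually dominate $\nu_\beta$ is this: choose $\gamma \in [0,\alpha)_T$ with $\gamma \geq \beta+2$ (possible since the branch is cofinal in the limit ordinal $\alpha$). For any $\rho+1 \in (\gamma,\alpha]_T$ one has $\xi_\rho \geq \gamma > \beta+1$, so the normality rule gives $\kappa_\rho \geq \lambda_{\beta+1}$. Now $\lambda_{\beta+1}$ is the largest cardinal of $\cM_{\beta+1}^{\Tt}\|\nu_{\beta+1}$, and by the second bullet at stage $\beta+1$ (available in your joint induction) $\nu_\beta$ is a cardinal there; hence $\lambda_{\beta+1} \geq \nu_\beta$, and so $\kappa_\rho \geq \nu_\beta$ as desired. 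This is precisely where the joint induction on both bullets earns its keep. With this correction, your sketch matches the intended (unwritten) argument.
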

\begin{convention} In this paper all iteration trees that we will encounter are normal iteration trees. In what follows when we write \textit{iteration tree} we mean \textit{normal iteration tree}.
\end{convention}

\begin{definition}
Let $\cM$ be a premouse and $\Tt $ an iteration tree on $\cM$ such that $\lh(\Tt)$ is a limit ordinal. We say that $b$ is a \emph{cofinal wellfounded branch through $\Tt$} iff 
 \begin{itemize}
		\item $b$ is a branch through $T^{\Tt}$ cofinal in $\lh(\Tt)$,
		\item $D^{\Tt}\cap b $ is finite,
		\item the direct limit along $b$ is well-founded.
	\end{itemize}

\end{definition}

\begin{remark}
	As we work under the hypothesis that there is no inner model with a Woodin cardinal, it follows that if $\Tt$ is  an iteration tree on a premouse $\cM$ and $\lh(\Tt)$ is a limit ordinal then $\Tt$ has at most one cofinal wellfounded branch through $\Tt$ (see  \cite{MR1876087}[Corollary 9.4.7], \cite{MR2768698}[Theorem 6.10]). In order to simplify notation  we will avoid mentioning iteration strategies in our definition of iterability but we warn the reader that this is not how iterability is usually defined.  In general, we would need a strategy to choose cofinal branches at limit stages, see \cite{MR1876087}[p. 290]. For the reader familiar with iteration strategies, as we work under the hypothesis that there is no inner model with a Woodin cardinal, the strategy of any premouse we encounter will always choose the unique cofinal wellfounded branch.
\end{remark}

\begin{definition}[Normal iterability]
Let $\alpha$ be a limit ordinal or the proper class $\ord$. 	
We say that $\cM$ is normaly \emph{$\alpha$-iterable} iff for any normal iteration tree $\Tt$  on $\cM$ of length $< \alpha$ the following holds:
\begin{enumerate}
	\item[(a)] If $\Tt$ is of limit length, then there is  a cofinal wellfounded branch $b$ through $\Tt$.
	\item[(b)] If $\Tt$ is of successor length $\gamma$ and $\gamma + 1 < \alpha$ and $\nu \geq \sup\{\nu_{\alpha}^{\Tt} \mid \alpha <\gamma\}$ is such that $E_{\nu}^{\cM^{\Tt}_{\gamma-1}} \neq \emptyset$, then $\Tt$ has a normal extension $\Tt'$ of length $\gamma+1$ with $\nu_{\gamma-1}^{\Tt'}:=\nu$. In other words, setting $\nu_{\gamma-1}^{\Tt}=\nu$, the ultrapower $\Ult^{*}(\cM_{\xi_{\gamma-1}^{\Tt'}}^{\Tt'}||\eta_{\gamma-1}^{\Tt'}, E_{\nu^{\Tt'}_{\gamma-1}}^{\cM^{\Tt'}_{\gamma}})$ is well founded, where $\eta^{\Tt'}_{\gamma-1}$ and $\xi^{\Tt'}_{\gamma-1}$ are determined by the rules of normal iteration trees.
\end{enumerate}
	When $\alpha=\ord$ and $\cM$ is normaly $\ord$-iterable we shall omit $\ord$ and write that $\cM$ is \emph{normaly iterable}.
\end{definition}

We shall also need a stronger notion of iterability .  

\begin{definition}
	Let $\cM$ be a premouse and $n \in \omega $. We say that $\vec{T}=\langle  \Tt_{k} \mid k \leq  n \rangle $ is a \emph{stack of normal iteration trees on $\cM$} iff $\Tt_{0}$ is an iteration tree on $\cM_{0}^{0}=\cM$ and  for every $ k < n, \cM^{k}_{0} \triangleleft \cM_{\infty}^{\Tt_{k-1}}$ and $\Tt_{k}$ is a normal iteration tree on $\cM_{0}^{k}$.
	\end{definition}
\begin{definition}[Iterability for stacks of normal trees]
	Let $\cM$ be a premouse. We say that $\cM$ is \emph{iterable} iff 
	\begin{enumerate}
		\item[(a)] If $\Tt= \langle \Tt_{k} \mid k\leq n \rangle $ is a stack of iteration trees on $\cM$, then $\cM_{\infty}^{\Tt_{n}}$ is normaly iterable. 
		\item[(b)] If $\Tt= \langle \Tt_{n} \mid n \in \omega \rangle $ is a stack of iteration trees on $\cM$ such that for each $n \in \omega$, $\cM_{\infty}^{\Tt_{n}}$ is normaly iterable, then for all sufficiently large $n \in \omega$ we have that $D^{\Tt_{n}} \cap b^{\Tt_{n}}=\emptyset$, where $b^{\Tt_{n}}$ is the main branch of $\Tt$ so that $\tau_{n}: \cM^{n}_{0}\rightarrow \cM^{\Tt_{n}}_{\infty}=\cM^{n+1}_{0}$ is defined for all $n$ sufficiently large. Moreover, the direct limit of the $\cM_{0}^{n}$'s under the $\tau_{n}$'s is wellfounded.
	\end{enumerate} 
	
\end{definition}

\begin{definition}[Self-iterability] \label{DefSelfIt} If $L[E]$ is an extender model, we say that $L[E]$ is \emph{self-iterable} iff the following sentence in the language $\{\in,\dot{E}\}$ holds: $$(\forall \alpha (\alpha \in \ord \rightarrow \langle J_{\alpha}^{E},\in,\dot{E}\restriction \omega\alpha, \dot{E}_{\omega\alpha} \rangle \text{ is iterable} ))^{L[E]}$$	
\end{definition}
The following definition is a slight variation of the notion of \textit{stable premouse} defined in \cite{Knm}.

\begin{definition} \label{stableDef}
	Let $\cM$ be a premouse and $\mu$ a regular cardinal and $\kappa \in \mu$. If $\cM \cap \ord \leq \mu$ we say that $\cM$ is \emph{$\mu$-stable} above $\kappa$ iff one of the following holds:
	\begin{enumerate}
		\item[(1)] $\cM\cap \ord < \mu$, or
		\item[(2)] $\cM\cap \ord = \mu$ and one of the following holds:
		\begin{enumerate}
			\item[(a)] $(\text{There is no largest cardinal})^{\cM}$, or
			\item[(b)] There is $\gamma < \mu$ such that $(\gamma \text{ is the largest cardinal } \wedge \cf(\gamma) < \kappa)^{\cM}$, or 
			\item[(c)] There is $\gamma < \mu$ such that $(\gamma \text{ is the largest cardinal } \wedge \cf(\gamma) \geq \kappa)^{\cM}$ and there is no $\beta$ such that $E_{\beta}^{\cM}$ is a total measure on $\cM$ with critical point $\cf^{\cM}(\gamma)$.
		\end{enumerate}
	\end{enumerate}
\end{definition}

 The next lemma shows that given a regular cardinal $\mu$, under very general conditions the ultrapower of a premouse $\cM$ of height $\leq \mu$ by an extender $F$ with $\lambda(F) < \mu$, has height $\leq \mu$.
\begin{lemma} \label{Stability} Let $\mu$ be a regular cardinal and $\cM$ a sound\footnote{Soundness implies that $\cM= h^{n}_{\cM}(\rho_{n}(\cM)\cup\{p^{n,\cM}\})$ for all $n \in \omega$.} premouse such that $\cM \cap \ord \leq \mu$. Let   $\kappa < \alpha \leq \mu$ and $F$ be  such that: 
	\begin{itemize}
		\item  $F$ is an extender over $\cM||\alpha$ and $\alpha$ is the largest ordinal $\leq \mu$ with this property.
		\item $Ult_{n}(\cM||\alpha,F)$ is well founded, where $n$ is the largest $k\leq\omega$ such that $\crit(F) <\rho_{k}(\cM)$.
		\item $\lambda(F) < \mu$.
		\item $ (\kappa \leq \crit(F) \wedge \crit(F)^{+} \text{ exists} )^{\cM||\alpha}$.
	\end{itemize} 
	 Suppose further that if $\alpha = \mu$ and there is $\gamma < \alpha$ such that  $$(\gamma \text{ is the largest cardinal})^{\cM||\alpha}$$ and $(cf(\gamma) \geq \kappa)^{\cM||\alpha}$,
	then $(crit(F) \neq cf(\gamma))^{\cM||\alpha} $. 
	
Then $$  Ult_{n}(\cM||\alpha,F) \cap \ord \leq \mu. $$  Moreover if $\alpha < \mu$ the above inequality is strict and if $\alpha = \mu$ then equality holds. 
\end{lemma}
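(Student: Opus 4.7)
The plan is to analyze $\cN := \Ult_n(\cM||\alpha, F)$ along with its ultrapower map $\pi := \pi_F \colon \cM||\alpha \to \cN$. Every element of $\cN$ is represented as $[a,f]_F$ with $a \in [\lambda(F)]^{<\omega}$ and $f \in \cM||\alpha$, so counting representatives gives an upper bound on $|\cN|^V$. If $\alpha < \mu$, then $\cM||\alpha$ has $V$-cardinality below $\mu$ (since $\mu$ is a cardinal) and $\lambda(F) < \mu$, so $|\cN|^V < \mu$, yielding $\cN \cap \ord < \mu$ immediately.

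For $\alpha = \mu$ I would proceed in two steps. First, every ordinal $\eta = [a,f]_F \in \cN$ satisfies $\eta \leq \pi(\delta)$ for some $\delta < \mu$: the range of $f$ has $V$-cardinality at most $|\dom(f)| \leq \crit(F) < \mu$, so by regularity of $\mu$ in $V$, $\sup \ran(f) < \mu$, and one takes $\delta := \sup \ran(f) + 1$. Second, I would show $\pi(\delta) < \mu$ for every $\delta < \mu$. If $\delta < \crit(F)$ then $\pi(\delta) = \delta$. Otherwise, set $\gamma := |\delta|^{\cM||\mu}$, so $\gamma \geq \crit(F)$; by hypothesis $\crit(F)$ is not the largest cardinal of $\cM||\mu$. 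When $\gamma$ also is not the largest cardinal, $(\gamma^+)^{\cM||\mu} < \mu$, and acceptability of $\cM||\mu$ (GCH) bounds the number of functions $[\crit(F)]^{<\omega} \to \gamma$ inside $\cM||\mu$ by $(\gamma^+)^{\cM||\mu}$; combining with $\lambda(F)^{<\omega}$ choices of $a$ and regularity of $\mu$ in $V$ yields $|\pi(\gamma)|^V < \mu$, and pushing forward a bijection between $\gamma$ and $\delta$ from $\cM||\mu$ through $\pi$ gives $|\pi(\delta)|^V = |\pi(\gamma)|^V < \mu$, so $\pi(\delta) < \mu$.

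The delicate case is when $\gamma$ is the largest cardinal of $\cM||\mu$, where the $\mu$-stability clauses enter. Case (a) does not arise. In case (b), $\cf^{\cM||\mu}(\gamma) < \kappa \leq \crit(F)$, so a cofinal sequence in $\gamma$ of length $< \crit(F)$ is carried by $\pi$ to a cofinal sequence of the same length in $\pi(\gamma)$, whose entries have non-largest cardinality in $\cM||\mu$ and hence $\pi$-image below $\mu$ by the previous step; regularity of $\mu$ in $V$ gives $\pi(\gamma) < \mu$. In case (c), the hypothesis $\crit(F) \neq \cf^{\cM||\mu}(\gamma)$ is essential: if $\crit(F) > \cf^{\cM||\mu}(\gamma)$ one argues as in (b); if $\crit(F) < \cf^{\cM||\mu}(\gamma)$, any $f \colon \crit(F) \to \gamma$ in $\cM||\mu$ has range bounded strictly below $\gamma$ by regularity in $\cM||\mu$, so $\pi$ is continuous at $\gamma$ and $\pi(\gamma) = \sup_{\delta' < \gamma} \pi(\delta')$ is a sup of fewer than $\mu$ many ordinals each below $\mu$, hence $< \mu$. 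Equality $\cN \cap \ord = \mu$ when $\alpha = \mu$ then follows from $\pi \upharpoonright \crit(F) = \id$ and monotonicity of $\pi$ on the ordinals.

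The main obstacle is precisely the subcase $\crit(F) = \cf^{\cM||\mu}(\gamma)$ of case (c), which the hypothesis excludes: in that scenario $\pi$ would be discontinuous at $\gamma$, introducing a ``fresh'' ordinal strictly between $\sup \pi''\gamma$ and $\pi(\gamma)$, and the counting-of-representatives bound would no longer control the jump, potentially pushing $\pi(\gamma)$ up to or beyond $\mu$ and destroying the height bound on $\cN$.
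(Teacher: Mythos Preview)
Your proposal is correct and follows essentially the same approach as the paper: both count representatives when $\alpha < \mu$, and for $\alpha = \mu$ both reduce via cofinality of the ultrapower map to bounding $\pi(\delta)$ for each $\delta < \mu$, handling the largest-cardinal case by continuity of $\pi$ at $\gamma$ (using $\crit(F) \neq \cf^{\cM}(\gamma)$) and the remaining ordinals by counting functions into $|\delta|$ via acceptability. The only difference is organizational---the paper splits first on whether a largest cardinal exists and then proves a separate claim that $i_F(\gamma)<\mu$ suffices, whereas you split on whether $|\delta|$ equals the largest cardinal and absorb the no-largest-cardinal case into the generic branch---but the content is identical.
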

\begin{proof}
	We split the analysis into two cases: $\alpha < \mu$ and $\alpha = \mu$ and the second case splits further into two subcases.  

	$\bullet$ Suppose $\alpha < \mu$. Notice that for $n > 0 $ the set $$\{ f:\crit(F) \rightarrow \cM||\alpha \mid  f \in \Sigma_{1}^{(n-1)}(\cM||\alpha) \}$$ has cardinality $\leq |\alpha|$.
	Therefore 
	$$|\Ult_{n}(\cM||\alpha,F)| \leq \max\{|\alpha|, |\lambda(F)| \} < \mu,$$
	 which implies $\Ult_{n}(\cM||\alpha,F) \cap \ord < \mu$. 
	
	$\bullet$ Suppose $\alpha = \mu$. Since $\cM $ is sound,  $\rho_{\omega}(\cM)=\mu$ and $n=\omega$. Let $i_{F}:\cM \rightarrow \Ult_{0}(\cM,F)$ be the ultrapower map derived from $F$. 
	We split this case into two subcases: 
	
	$\blacktriangleright$ Suppose there is $\gamma$ such that $(\gamma \text{ is the largest cardinal})^{\cM}$:	
	 \begin{claim}
	 	If $i_{F}(\gamma) < \mu$, then $\Ult_{0}(\cM,F) \cap \ord \leq \mu$
	 \end{claim} 
	 \begin{proof}
	 	For a contradiction suppose that $i_{F}(\gamma) < \mu$ and that there is $\xi \in \Ult_{0}(\cM,F) $ such that $ \xi \geq \mu $. The ultrapower map $i_{F}$ is cofinal in $\Ult_{0}(\cM,F)$, therefore there is $\beta \in \mu$ such that $i_{F}(\beta) \geq \xi \geq \mu$. Let $$\varphi(\beta,\gamma,h):=  ``(h:\gamma \rightarrow \beta) ~ \wedge ~ ( h \text{ is a surjection})"$$ The formula $\exists h \varphi (\beta,\gamma,h)$ is $\Sigma_{1}$ and therefore it is preserved by $i_{F}$ and
	 	\begin{gather}\label{otimes} \Ult_{0}(\cM,F) \models \exists h \varphi(i_{F}(\beta),i_{F}(\gamma),h). \end{gather} 
	 	Fix $h \in Ult_{0}(\cM,F)$ which witnesses \eqref{otimes}. As  $\varphi(i_{F}(\beta,i_{F}(\gamma)),h)$ is $\Sigma_{0}$ and $\Ult_{0}(\cM,F)$ is transitive it follows that $ \varphi(i_{F}(\beta,i_{F}(\gamma)),h)$ holds in $V$. Therefore $h$ is a surjection from $i_{F}(\gamma)$ onto $i_{F}(\beta)$. As $i_{F}(\gamma) < \mu$ and $i_{F}(\beta) > \mu$ this contradicts our hypothesis that $\mu$ is a cardinal. 
	 \end{proof}
	 
	 Next we verify $i_{F}(\gamma)= sup_{\xi < \gamma}(\xi)$. 
	 Let $\zeta \in i_{F}(\gamma)$ and let $f \in \cM$, $a \in \lambda(F)^{<\omega}$ be such that $f:\crit(F) \rightarrow \gamma $ and $ [a,f] $ represents $\zeta$ in the ultrapower of $\cM$ by $F$. From the hypothesis in our lemma, if $cf^{\cM}(\gamma) \leq \kappa $ or $cf^{\cM}(\gamma) > \kappa$ in both cases we have $cf^{\cM}(\gamma) \neq \crit(F)$.
	 Then (i) or (ii) below must hold:
	  	  \begin{enumerate}
	  	\item[(i)]  $cf^{\cM}(\gamma)>\crit(F)$ implies that there is $\xi < \gamma $ such that $sup(ran(f))<\xi$,
	  	\item[(ii)] $cf^{\cM}(\gamma) <\crit(F)$ implies that there is $\xi<\gamma$ such that $\{u\in \crit(F)^{|a|} \mid f(u) \in \xi \} \in F_{a}$. 
	  	\end{enumerate} 
	    Thus we can find $\xi < \gamma $ such that $[a,f] \in i_{F}(\xi) \in i_{F}(\gamma)$.    Hence $i_{F}(\gamma) = sup_{\xi<\gamma}i_{F}(\xi)$. 
	   
	   \begin{claim}\label{iFgamma} Given $\xi < \gamma$, it follows that $$|i_{F}(\xi)| \leq \max\{(|\xi^{\crit(F)}|)^{\cM},\crit(F)^{+\cM},|\lambda(F)|^{\cM}\} \leq \gamma <\mu.$$
	   \end{claim}
	   \begin{proof} From our hypothesis that $\lambda(F) <\mu$, since $\gamma$ is the largest cardinal of $\cM$, it follows that $|\lambda(F)|^{\cM} \leq \gamma$. 
	   	
	   	From our hypothesis that $\crit(F)^{+\cM}$ exists in $\cM$, it follows that $\crit(F)^{+} \leq \gamma$.
	   	
	   	Notice that $\xi < \gamma$ and $\delta < \gamma$ imply  $|\xi^{\crit(F)}|^{\cM} \leq \gamma$. 
	   	\end{proof} 
	   	
	   		   	Therefore from the above claim and the regularity of $\mu$ we have:	   $$i_{F}(\gamma) = sup_{\xi<\gamma}i_{F}(\xi) <\mu.$$

	 $\blacktriangleright$ Suppose $(\text{there is no largest cardinal})^{\cM}$. Since $i_{F}$ is cofinal in $\Ult_{0}(\cM,F)$ it will be enough to verify that $i_{F}(\xi) <\mu$ for all $\xi < \mu$.   Given $\xi < \mu $, similarly as in the proof of Claim \ref{iFgamma} we get that $ |i_{F}(\xi)| \leq \max\{(|\xi^{\crit(F)}|)^{\cM},\crit(F)^{+\cM},|\lambda(F)|\} < \mu$. 
	\end{proof}
Using induction and Lemma \ref{Stability} we can obtain the following:

\begin{lemma}(\cite[Lemma 4.8]{Knm}) \label{StablePM}
	Let $\mu$ be a regular cardinal in $V$, $\kappa$ an ordinal such that $\kappa < \mu$ and  $\cM$ is a sound premouse that is $\mu$-stable above $\kappa$. Let $\Tt$ be an iteration tree on $\cM$ such that $\lh(\Tt)<\mu$ and, for all $\beta+1 < \lh(\Tt) $, $\crit(E_{\beta}^{\Tt}) \geq \kappa$.  	
	Then  $\beta \in lh(\Tt)$ implies $\cM_{\beta}^{\Tt}\cap \ord \leq \mu$.
\end{lemma}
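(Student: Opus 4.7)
The plan is to prove the conclusion by induction on $\beta < \lh(\Tt)$, maintaining simultaneously the slightly stronger inductive hypothesis that each intermediate premouse $\cM_\beta^{\Tt}$ is itself $\mu$-stable above $\kappa$. The base case $\beta = 0$ is the standing hypothesis on $\cM$. The limit case is a direct-limit argument: since $\lh(\Tt) < \mu$ and $\mu$ is regular, every ordinal of $\cM_\beta^{\Tt}$ is the image of an ordinal from some earlier $\cM_\gamma^{\Tt}$ with $\gamma \in [0,\beta]_T$, hence is bounded by $\mu$, and $\mu$-stability of $\cM_\beta^{\Tt}$ is inherited from a cofinal tail of the branch together with the elementarity of the maps $\pi_{\gamma,\beta}^{\Tt}$ on critical points $\geq \kappa$.

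The substantive step is the successor case $\beta = \alpha + 1$, where $\cM_\beta^{\Tt} = \Ult^{*}(\cM_{\xi_\alpha}^{\Tt}||\eta_\alpha, E_\alpha^{\Tt})$. I would apply Lemma \ref{Stability} to the premouse $\cM_{\xi_\alpha}^{\Tt}$ with the ordinal $\eta_\alpha$, the cardinal $\kappa$, and the extender $F := E_\alpha^{\Tt}$. The hypotheses are verified as follows: by normality of $\Tt$, $\eta_\alpha$ is the largest ordinal $\leq \h(\cM_{\xi_\alpha}^{\Tt})$ at which $F$ acts as an extender; well-foundedness of the ultrapower is built into the definition of iteration tree; $\kappa \leq \crit(F)$ is the global hypothesis; $\crit(F)^{+}$ exists in $\cM_{\xi_\alpha}^{\Tt}||\eta_\alpha$ because it equals $\tau_\alpha^{\Tt}$; and $\lambda(F) < \mu$ follows from Jensen $\lambda$-indexing, which forces $\lambda(F) < \nu_\alpha \leq \cM_\alpha^{\Tt} \cap \ord \leq \mu$ by the inductive hypothesis. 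Lemma \ref{Stability} then delivers $\cM_\beta^{\Tt} \cap \ord \leq \mu$.

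The main obstacle is verifying the ``moreover'' clause of Lemma \ref{Stability} in the critical subcase $\eta_\alpha = \mu$: one must exclude the possibility that there exists a largest cardinal $\gamma < \mu$ of $\cM_{\xi_\alpha}^{\Tt}||\mu$ with $(\cf(\gamma) \geq \kappa)^{\cM_{\xi_\alpha}^{\Tt}||\mu}$ and $\crit(F) = \cf(\gamma)$. Here the inductive $\mu$-stability of $\cM_{\xi_\alpha}^{\Tt}$ in the sense of Definition \ref{stableDef}(2)(c) is essential: it forbids any total measure on the $E$-sequence of $\cM_{\xi_\alpha}^{\Tt}$ with critical point $\cf(\gamma)$. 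Coherency of the extender sequences of $\cM_\alpha^{\Tt}$ and $\cM_{\xi_\alpha}^{\Tt}$ below $\tau_\alpha$, combined with the normality rule $\tau_\alpha = \crit(F)^{+\cM_{\xi_\alpha}^{\Tt}||\eta_\alpha}$, forces $F$ to be a total measure on $\cM_{\xi_\alpha}^{\Tt}$ in that scenario, contradicting stability.

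Finally, to close the induction one checks that $\cM_\beta^{\Tt}$ is itself $\mu$-stable above $\kappa$: if $\cM_\beta^{\Tt} \cap \ord < \mu$ this is trivial, and if equality holds one analyses the three alternatives (a), (b), (c) of Definition \ref{stableDef}(2), using that the ultrapower map fixes cardinal structure below $\crit(F)$ (hence below $\kappa$) and that images of the three configurations under $\pi_{\xi_\alpha,\beta}^{\Tt}$ again satisfy one of these clauses by elementarity. This yields the full inductive statement, and the conclusion of the lemma is the first conjunct.
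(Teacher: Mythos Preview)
Your overall plan---induction on $\beta$, carry $\mu$-stability of each $\cM_\beta^{\Tt}$, and invoke Lemma~\ref{Stability} at successor stages---is exactly the approach the paper indicates (its entire proof is the single sentence ``Using induction and Lemma~\ref{Stability} we can obtain the following''). So at the level of strategy you are aligned with the paper and more explicit than it.

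There is, however, a genuine gap in your handling of the obstacle in case~(2)(c). You write that coherency below $\tau_\alpha$ together with $\tau_\alpha=\crit(F)^{+\cM_{\xi_\alpha}^{\Tt}||\eta_\alpha}$ ``forces $F$ to be a total measure on $\cM_{\xi_\alpha}^{\Tt}$, contradicting stability.'' But $F=E_\alpha^{\Tt}$ lives on the extender sequence of $\cM_\alpha^{\Tt}$ at index $\nu_\alpha>\tau_\alpha$, and the agreement you cite is only \emph{below} $\tau_\alpha$; nothing you have said puts $F$ (or any measure) onto the $E$-sequence of $\cM_{\xi_\alpha}^{\Tt}$. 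Definition~\ref{stableDef}(2)(c) forbids a total measure \emph{on the sequence of} $\cM_{\xi_\alpha}^{\Tt}$ with critical point $\cf(\gamma)$; the fact that $F$ is a total extender \emph{over} $\cM_{\xi_\alpha}^{\Tt}$ (which is what $\eta_\alpha=\mu$ gives you) is a different statement and does not immediately contradict (2)(c). Moreover $F$ need not be a measure at all.

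To close the gap you must produce a total measure on the $E$-sequence of $\cM_{\xi_\alpha}^{\Tt}$ with critical point $\delta:=\crit(F)$. One route: first observe that $\eta_\alpha=\mu$ forces $\mathcal{P}(\delta)\cap\cM_\alpha^{\Tt}=\mathcal{P}(\delta)\cap\cM_{\xi_\alpha}^{\Tt}$ (via the agreement $\cM_\alpha^{\Tt}|\nu_{\xi_\alpha}=\cM_{\xi_\alpha}^{\Tt}|\nu_{\xi_\alpha}$ and the fact that $\nu_{\xi_\alpha}$ is a cardinal of $\cM_\alpha^{\Tt}$ above $\delta$), so $F$ is also total on $\cM_\alpha^{\Tt}$; then use the initial segment condition for Jensen-indexed premice to extract from $E_\alpha^{\Tt}$ a total measure on the sequence of $\cM_\alpha^{\Tt}$ with critical point $\delta$; finally argue that this measure is indexed below $\nu_{\xi_\alpha}$, hence lies on the sequence of $\cM_{\xi_\alpha}^{\Tt}$ by coherency. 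Each of these steps needs justification, and the last one in particular is not automatic.
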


\begin{definition}\label{DefLives}
	Given a premouse $\cM$,  $\mu \in \cM \cap \ord$ and a normal iteration tree $$\Tt= \langle \langle \cM_{\alpha}^{\Tt} \mid \alpha < \theta \rangle, \langle \nu_{\beta}^{\Tt} \mid \beta+1 < \theta \rangle, \langle \eta^{\Tt}_{\beta} \mid \beta+1 < \theta \rangle, \langle \pi^{\Tt}_{\alpha,\beta} \mid \alpha \leq_{T} \beta < \theta \rangle, T^{\Tt} \rangle $$ on $\cM$, we say that \emph{$\Tt$ lives on $\cM|\mu$} iff
	$$\mathcal{U}:= \langle \langle \cM_{\alpha}^{\mathcal{U}} \mid \alpha < \theta \rangle, \langle \nu^{\mathcal{U}}_{\beta} \mid \beta+1 <  \theta \rangle, \langle \eta_{\beta}^{\mathcal{U}} \mid \beta+1 < \theta \rangle, \langle \pi^{\mathcal{U}}_{\alpha,\beta} \mid \alpha \leq_{T^{\mathcal{U}}} \beta < \theta \rangle, T^{\mathcal{U}} \rangle $$	 
	such that $\cM_{0}^{\mathcal{U}}=\cM|\mu$,  $T^{\Tt}=T^{\mathcal{U}}$ and $\langle \nu_{\beta}^{\mathcal{U}} \mid \beta < \theta \rangle = \langle \nu_{\beta}^{\Tt} \mid \beta < \theta \rangle$ is a normal iteration tree on $\cM|\mu$. 
	
	We will denote $\mathcal{U}$ by $\Tt\restriction (\cM|\mu)$ and call it the \emph{restriction of $\Tt$ to $\cM|\mu$}.
\end{definition}

\begin{remark}
	In Definition \ref{DefLives} the sequence $\langle \eta^{\mathcal{U}}_{\beta} \mid \beta \in B^{\mathcal{U}} \rangle $ is determined by the other parameters in $\mathcal{U}$ and the requirement that $\mathcal{U}$ is normal. 
\end{remark}

\begin{lemma} \label{IterationBounds} Let $\mu$ be a regular cardinal, $\kappa$ an ordinal such that $\kappa < \mu$ and  $\cM$ be a proper class premouse. Suppose that $\cM|\mu$ is $\mu$-stable above $\kappa$.  Let  $\Tt$ be an iteration tree on $\cM$ that lives on $\cM|\mu$ and for all $\beta +1 \in \lh(\Tt)$ we have $\crit(E_{\beta}^{\Tt}) \geq \kappa$ and $\lh(\Tt)<\mu$. Let $\Uu= \Tt \restriction \cM|\mu$. Then for all $\beta \in \lh(\Tt)$, the following holds:
	\begin{itemize}
		\item[$(a)_{\beta}$] If $D^{\Tt}\cap[0,\beta]_{T}=\emptyset$, then 
		\begin{itemize}
			\item $\pi_{0,\beta}^{\Tt}(\mu)$ is defined,
			\item  $\pi_{0,\beta}^{\Tt}(\mu)= \sup_{\gamma < \mu} (\pi^{\Tt}_{0,\beta}(\gamma)) = \mu$
			\item  $\cM_{\beta}^{\Tt}|\mu = \cM_{\beta}^{\Uu}$.
		\end{itemize}
	\item[$(b)_{\beta}$] If $D^{\Tt} \cap [0,\beta]_{T}\neq \emptyset$, then $\cM_{\beta}^{\Tt}=\cM^{\Uu}_{\beta}$.
	\end{itemize}
	
\end{lemma}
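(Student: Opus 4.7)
The plan is to proceed by simultaneous induction on $\beta \in \lh(\Tt)$, verifying $(a)_\beta$ and $(b)_\beta$ together. The base case $\beta = 0$ is immediate: $\cM_0^\Tt = \cM$ is a proper class with $\cM_0^\Tt | \mu = \cM | \mu = \cM_0^\Uu$, and $\pi_{0,0}^\Tt = \id$. For the successor step, I would fix $\alpha = \beta + 1$ and $\xi = \pred_T(\alpha)$, and exploit that since $\Tt$ lives on $\cM|\mu$ we have $\nu_\beta^\Tt = \nu_\beta^\Uu < \mu$; by the coherency condition (Fact~\ref{fact1}) and the inductive hypothesis at $\xi$ and $\beta$, the extenders $E_\beta^\Tt$ and $E_\beta^\Uu$ coincide and satisfy $\crit(E_\beta^\Tt) \geq \kappa$, $\lambda(E_\beta^\Tt) < \mu$, and $\tau_\beta < \mu$.

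I then case-split on whether $[0,\xi]_T \cap D^\Tt = \emptyset$. If not, $(b)_\xi$ gives $\cM_\xi^\Tt = \cM_\xi^\Uu$; the normal iteration rule computes $\eta_\beta^\Tt$ and $\eta_\beta^\Uu$ by the same formula applied to the same structure, so they agree, the ultrapowers coincide, and $(b)_\alpha$ holds. If $[0,\xi]_T \cap D^\Tt = \emptyset$, then by $(a)_\xi$ we have $\cM_\xi^\Tt | \mu = \cM_\xi^\Uu$, and I split further on where $\tau_\beta$ ceases to be $\kappa_\beta^+$ in $\cM_\xi^\Tt$. If this happens below $\mu$, then $\eta_\beta^\Tt = \eta_\beta^\Uu < \mu$, both trees drop at $\alpha$, the relevant initial segments agree, and $\cM_\alpha^\Tt = \cM_\alpha^\Uu$ gives $(b)_\alpha$. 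Otherwise $\tau_\beta$ remains $\kappa_\beta^+$ through all of $\cM_\xi^\Uu$, neither tree drops at $\alpha$, and I invoke Lemma~\ref{Stability}---with its cofinality hypothesis supplied by the $\mu$-stability condition (either $\cM_\xi^\Uu$ has no largest cardinal, or its largest cardinal $\gamma$ satisfies $\cf^{\cM_\xi^\Uu}(\gamma) < \kappa \leq \crit(E_\beta^\Tt)$, or $\cM_\xi^\Uu$ carries no total measure with critical point $\cf^{\cM_\xi^\Uu}(\gamma)$, each preserved along the iteration by Lemma~\ref{StablePM})---to conclude that $\h(\cM_\alpha^\Uu) = \mu$ and the ultrapowers agree below $\mu$. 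The identity $\pi_{0,\alpha}^\Tt(\mu) = \mu = \sup_{\gamma < \mu}\pi_{0,\alpha}^\Tt(\gamma)$ then follows by combining $(a)_\xi$ with the continuity of $\pi_{\xi,\alpha}^\Tt$ at $\mu$, exploiting that $\mu$ is above $\crit(E_\beta^\Tt)$.

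For the limit case, the argument is driven by the drop pattern of the main branch $[0,\beta]_T$: if it contains a drop (necessarily only finitely many), then after the last drop the $\Tt$- and $\Uu$-models coincide and the direct limits match, giving $(b)_\beta$; if $[0,\beta]_T$ avoids $D^\Tt$, then for cofinally many $\xi <_T \beta$ the maps $\pi_{\xi,\beta}^\Tt$ are defined, the inductive hypothesis $(a)_\xi$ passes to the direct limit (Lemma~\ref{StablePM} bounds the height of $\cM_\beta^\Uu$ by $\mu$), and $\pi_{0,\beta}^\Tt(\mu) = \mu$ falls out from continuity of the direct limit. The main obstacle I foresee is the delicate sub-case of the successor step where $\tau_\beta$ remains $\kappa_\beta^+$ all the way to $\mu$ in $\cM_\xi^\Uu$: one has to rule out that $\cM_\xi^\Tt$, above level $\mu$, exhibits a collapse of $\tau_\beta$ that would create a drop in $\Tt$ not matched by a drop in $\Uu$. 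This is precisely where the $\mu$-stability hypothesis on $L[E]|\mu$---preserved under the iteration via Lemma~\ref{StablePM}---is essential, together with checking the cofinality hypothesis of Lemma~\ref{Stability} in each of the sub-cases (a), (b), (c) of Definition~\ref{stableDef}.
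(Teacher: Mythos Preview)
Your inductive skeleton is exactly the one the paper uses: induct on $\beta$, split the successor step according to whether there has been a drop along $[0,\xi]_T$, and within the no-prior-drop case split again on whether $\eta_\beta^{\Tt}<\mu$; handle limits by taking direct limits and quoting Lemma~\ref{StablePM}. Your invocation of Lemma~\ref{Stability}/\ref{StablePM} in the no-drop sub-case to get $\h(\cM_\alpha^{\Uu})=\mu$ is also where the paper uses it.

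The one point where your plan goes astray is the resolution of the ``drop mismatch'' you flag at the end. You correctly identify the issue: when $\tau_\beta$ remains $\kappa_\beta^{+}$ throughout $\cM_\xi^{\Uu}=\cM_\xi^{\Tt}|\mu$, one must rule out that $\cM_\xi^{\Tt}$ collapses $\tau_\beta$ at some level $\geq\mu$, which would make $\Tt$ drop at $\alpha$ while $\Uu$ does not. But this is \emph{not} where $\mu$-stability enters. Stability is a hypothesis on $\cM|\mu$ and its iterates of height $\leq\mu$; it says nothing about the levels of the proper-class model $\cM_\xi^{\Tt}$ above $\mu$, so it cannot rule out such a collapse. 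What the paper uses here is simply acceptability of $\cM_\xi^{\Tt}$: since $\crit(E_\beta^{\Tt})<\mu$ one has
\[
\mathcal{P}(\crit(E_\beta^{\Tt}))\cap \cM_\xi^{\Tt}\ \subseteq\ \cM_\xi^{\Tt}\cap H_\mu\ =\ \cM_\xi^{\Tt}|\mu\ =\ \cM_\xi^{\Uu},
\]
so if $E_\beta^{\Tt}$ is total on $\cM_\xi^{\Uu}$ it is already total on all of $\cM_\xi^{\Tt}$, and there is no drop in $\Tt$ either. Equivalently (and this is how the paper phrases it), if $\alpha\in D^{\Tt}$ then $\eta_\beta^{\Tt}<\mu$, whence $\eta_\beta^{\Tt}=\eta_\beta^{\Uu}$ and the drops match. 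Once you replace your appeal to stability at this spot by this one-line acceptability argument, your proof is the paper's proof.
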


\begin{proof}
	We proceed by induction. Suppose $\beta \in \lh(\Tt)$ and $(a)_{\gamma} $ and $(b)_{\gamma}$ holds for all $\gamma < \beta$.  We split the analysis into two cases, $\beta$ is a successor ordinal or $\beta$ is a limit ordinal. 
	
	$\bullet$ Suppose $\beta = \delta+1 $ for some ordinal $\delta$. We again need to divide into two subcases based on whether $D^{\Tt}\cap [0,\beta]_{\Tt}$ is empty or not.
	
	$\blacktriangleright$ Suppose $D^{\Tt} \cap [0,\beta]_{T}=\emptyset$. Then $\pi_{0,\beta}^{\Tt}(\mu)$ is defined. Recalling our notation from Definition \ref{ItTrees}, $\xi_{\delta}^{\Tt}=\pred_{T}(\beta)$ and $\eta_{\delta}^{\Tt}$ is the largest ordinal such that $E_{\delta}^{\Tt}$ is a total extender over  $\cM_{\xi_{\beta}^{\Tt}}^{\Tt}||\eta_{\delta}^{\Tt}$. 
	
	As $D^{\Tt} \cap [0,\beta]_{T} =\emptyset$, it follows that $\cM_{\xi_{\delta}^{\Tt}}^{\Tt}$ is a proper class and $\cM_{\beta}^{\Tt}$ is the $\Sigma_0$-ultrapower of $\cM_{\xi_{\delta}^{\Tt}}^{\Tt}$ by $E_{\delta}^{\Tt}$. 
		
    Hence, given $\zeta < \pi_{\xi_{\delta},\delta}^{\Tt}(\mu) $, there are $a \in lh(E_{\delta}^{\Tt})^{<\omega}$ and $f \in \cM_{\xi_{\delta}^{\Tt}}^{\Tt}$, $f:(\crit(E_{\delta}^{\Tt}))^{|a|}\rightarrow \mu$ such that $[a,f]_{E_{\delta}^{\Tt}}$ represents $\zeta$ in $\Ult_{0}(\cM_{\xi_{\delta}^{\Tt}}^{\Tt},E_{\delta}^{\Tt})$. 
    
    As $\mu$ is a regular cardinal there is $\Upsilon < \mu$ such that $\sup(\ran(f)) < \Upsilon < \mu$ and therefore $\zeta < \pi_{\xi_{\delta},\beta}^{\Tt}(\Upsilon) < \pi_{\xi_{\delta},\beta}^{\Tt}(\mu)$. 
    
    By our induction hypothesis we have $$ \cM_{\xi_{\delta}^{\Tt}}^{\Tt}|\mu = \cM_{\xi_{\delta}}^{\Uu} \text{ and } E_{\delta}^{\Tt} = E_{\delta}^{\Uu}.$$
    
    Therefore, $$\cM_{\beta}^{\Uu}= \Ult_{0}(\cM_{\xi_{\delta}}^{\Uu},E_{\delta}^{\Tt})= \Ult^{*}(\cM_{\xi_{\delta}}^{\Tt}|\mu,E_{\delta}^{\Tt}),$$
    and by Lemma \ref{StablePM} applied to $\Uu\restriction \beta+1$ for all $\gamma < \mu$ we have $\pi_{\xi_{\delta},\beta}^{\Uu}(\gamma) < \mu$. 
    Thus for $\gamma = \Upsilon$ we have $\zeta < \pi_{\xi_{\delta},\beta}^{\Tt}(\Upsilon) < \mu$. 
    
    Therefore, $$\mu \geq \sup_{\gamma \in \mu}(\pi^{\Tt}_{\xi_{\delta},\beta}(\gamma)) \geq \pi^{\Tt}_{\xi_{\delta},\beta}(\mu) \geq \mu ,$$
    	and $$\Ult^{*}(\cM^{\Tt}_{\xi_{\delta}^{\Tt}}|\mu,E_{\delta}^{\Tt})= \cM_{\beta}^{\Tt}|\mu.$$
    	
    $\blacktriangleright$ Suppose $D^{\Tt}\cap [0,\beta]_{T} \neq \emptyset$. We need to further subdivide into two subcases depending on whether $D^{\Tt}\cap[0,\xi_{\delta}^{\Tt}]$ is empty or not.
    
    \begin{itemize}
    	\item[$\star$] If $D^{\Tt}\cap [0,\xi_{\delta}^{\Tt}]_{T} = \emptyset$, then $\beta \in D^{\Tt}$ and by induction hypothesis we have $\cM_{\xi_{\delta}^{\Tt}}^{\Tt}|\mu =\cM_{\xi_{\delta}^{\Tt}}^{\Uu}$ and $E_{\delta}^{\Tt}$ is not a total extender on $\cM_{\xi_{\delta}^{\Tt}}^{\Tt}$. 
    	
    	As $\cM_{\xi_{\delta}^{\Tt}}^{\Tt}$ is an acceptable $J$-structure, it follows that 
    	$$ \cM_{\xi_{\delta}^{\Tt}}^{\Tt} \cap H_{\mu} = \cM_{\xi_{\delta}^{\Tt}}^{\Tt}|\mu \supseteq \mathcal{P}(\crit(E_{\delta}^{\Tt}))\cap \cM_{\xi_{\delta}^{\Tt}}^{\Tt}$$ 
    	
    	Hence, if $\eta_{\delta}^{\Tt} \geq \mu$, it follows that $E_{\delta}^{\Tt}$ is a total extender on $\cM_{\xi_{\delta}}^{\Tt}$ and $\beta \not\in D^{\Tt}$, which is a contradiction as we are assuming that $\beta \in D^{\Tt}$.
    	
    	Therefore $\eta_{\delta}^{\Tt} < \mu$, $\eta^{\Tt}_{\delta}=\eta^{\Uu}_{\delta}$ and by our induction hypothesis 
    	$$\cM_{\xi_{\delta}^{\Tt}}^{\Tt}||\eta_{\delta}^{\Tt} = \cM_{\xi_{\delta}^{\Tt}}^{\Uu}||\eta_{\delta}^{\Tt}.$$ 
    	
    	Then 
    	$$ \cM_{\beta}^{\Tt}= \Ult^{*}(\cM_{\xi_{\delta}^{\Tt}}^{\Tt}||\eta^{\Tt}_{\delta},E_{\delta}^{\Tt}) = \Ult^{*}(\cM_{\xi_{\delta}^{\Tt}}^{\Uu}||\eta^{\Uu}_{\delta},E^{\Uu}_{\delta}) = \cM_{\beta}^{\Uu}.$$
    	
    	\item[$\star$] If $D^{\Tt}\cap[0,\xi_{\delta}^{\Tt}]_{T}\neq\emptyset$, then by our induction hypothesis $\cM_{\xi_{\delta}^{\Tt}}^{\Tt} = \cM_{\xi_{\delta}^{\Tt}}^{\Uu}$, so 
    	$$ \cM_{\beta}^{\Tt}= \Ult^{*}(\cM_{\xi_{\delta}^{\Tt}}^{\Tt},E_{\delta}^{\Tt}) = \Ult^{*}(\cM_{\xi_{\delta}^{\Tt}}^{\Uu},E^{\Uu}_{\delta}) = \cM_{\beta}^{\Uu}.$$

    	\end{itemize}

     $\bullet$ Suppose $\beta$ is a limit ordinal. Again, we divide into two subcases based on whether $D^{\Tt}\cap [0,\beta]_{\Tt}$ is empty or not.
     
     $\blacktriangleright$ Suppose $D^{\Tt}\cap[0,\beta]_{T}=\emptyset$. Given $\gamma < \pi_{0,\beta}^{\Tt}(\mu)$, we have that $\zeta < \gamma$ iff there are $\bar{\beta} \in [0, \beta)_{T}$ and $\bar{\zeta} < \bar{\gamma} < \mu = \pi_{0,\bar{\beta}}^{\Tt}(\mu)$ such that $\pi_{\bar{\beta},\beta}^{\Tt}(\bar{\zeta})  = \zeta $ and $\pi_{\bar{\beta},\beta}^{\Tt}(\bar{\gamma})=\gamma$. 
     
     By a cardinality argument it follows that $\gamma <\mu$. Therefore $\pi_{0,\beta}^{\Tt}(\mu)\leq \mu$ and 
    $$ \cM_{\beta}^{\Tt}|\mu = \dirlim_{\bar{\beta}\in[0,\beta]_{T}}(\cM_{\bar{\beta}}^{\Tt}|\mu,\pi_{\bar{\beta},\beta}^{\Tt}|\mu) = \dirlim_{\bar{\beta}\in [0,\beta]_{T}}(\cM_{\bar{\beta}}^{\Uu},\pi_{\bar{\beta},\beta}^{\Uu})=\cM_{\beta}^{\Uu}$$
    
    $\blacktriangleright$ Suppose $D^{\Tt}\cap[0,\beta]_{T}\neq\emptyset$, let $\zeta$ be the largest element in $D^{\Tt}\cap[0,\beta]_{\Tt}$. Then by induction hypothesis 
    $$\cM_{\beta}^{\Tt} = \dirlim_{\bar{\beta}\in[\zeta,\beta]_{T}}(\cM^{\Tt}_{\bar{\beta}},\pi^{\Tt}_{\bar{\beta},\beta})=\dirlim_{\bar{\beta}\in[\zeta,\beta]_{\Uu}}(\cM_{\bar{\beta}}^{\Uu},\pi_{\bar{\beta},\beta}^{\Uu}) = \cM_{\beta}^{\Uu}$$	\end{proof}

\begin{remark}
	Given a proper class premouse $L[E]$ and a regular cardinal $\mu$ such that $L[E]|\mu$ is $\mu$-stable above $\kappa$, our next result, Lemma \ref{Lives}, gives a sufficient condition on iteration trees $\Tt$ on $L[E]$ for $\Tt$ to live on $L[E]|\mu$. 
\end{remark}

\begin{lemma}\label{Lives}
	Let $\mu$ be a regular cardinal. Suppose $\cM$ is a proper class premouse and $\Tt$ is a finite \footnote{This lemma remains true if we drop the hypothesis that $\Tt$ is finite.} iteration tree on $L[E]$ such that for all $\beta+1 \in lh(\Tt)$ we have $\crit(E_{\beta}^{\Tt} ) >\kappa$ and $lh(\Tt) < \mu$.  Suppose further that $L[E]|\mu$ is $\mu$-stable above $\kappa$. 
	If for all $\beta \in lh(\Tt)$ we have $\nu_{\beta}^{\Tt} <\mu$, then $\Tt$ lives on $\cM|\mu$. 
\end{lemma}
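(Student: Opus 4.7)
The plan is to construct the prospective tree $\Uu := \Tt\restriction (L[E]|\mu)$ by induction on $\beta < \lh(\Tt)$, simultaneously maintaining as an invariant the agreement between $\Uu$ and $\Tt$ supplied by Lemma \ref{IterationBounds}. Since $\Tt$ is finite there are no limit stages to treat. In the base case I set $\cM_{0}^{\Uu} := L[E]|\mu$, and throughout I copy $T^{\Uu} := T^{\Tt}$ and $\nu_{\beta}^{\Uu} := \nu_{\beta}^{\Tt}$ from the original tree.

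For the successor step, let $\xi := \xi_{\beta}^{\Tt} = \pred_{T}(\beta+1)$. By the inductive hypothesis and Lemma \ref{IterationBounds}, either $\cM_{\beta}^{\Uu} = \cM_{\beta}^{\Tt}$ (if $D^{\Tt}\cap[0,\beta]_{T}\neq\emptyset$) or $\cM_{\beta}^{\Uu} = \cM_{\beta}^{\Tt}|\mu$ (otherwise). In either case, because $\nu_{\beta}^{\Tt} < \mu$, the extender $E_{\nu_{\beta}^{\Tt}}^{\cM_{\beta}^{\Tt}}$ lies on the $E$-sequence of $\cM_{\beta}^{\Uu}$, so I put $E_{\beta}^{\Uu} := E_{\beta}^{\Tt}$. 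The normality rules then automatically yield $\xi_{\beta}^{\Uu} = \xi_{\beta}^{\Tt}$, since the relevant $\lambda_{\gamma}$ and $\kappa_{\gamma}$ agree across both trees for all $\gamma \leq \beta$. The drop-index $\eta_{\beta}^{\Uu}$ coincides with $\eta_{\beta}^{\Tt}$ except in the case where $[0,\xi]_{T}$ does not drop in $\Tt$ and $E_{\beta}^{\Tt}$ is total on $\cM_{\xi}^{\Tt}$; in that exceptional case I instead take $\eta_{\beta}^{\Uu} = \mu = \h(\cM_{\xi}^{\Uu})$.

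I then form $\cM_{\beta+1}^{\Uu} := \Ult^{*}(\cM_{\xi}^{\Uu}||\eta_{\beta}^{\Uu}, E_{\beta}^{\Uu})$ and verify it is a legitimate iterate. Wellfoundedness is inherited from $\cM_{\beta+1}^{\Tt}$ via the canonical factor map $[a,f]_{E_{\beta}^{\Uu}}\mapsto [a,f]_{E_{\beta}^{\Tt}}$, which is $\Sigma_{0}$-elementary with wellfounded target (since $\Tt$ is an iteration tree by hypothesis). The height bound $\cM_{\beta+1}^{\Uu}\cap\ord\leq\mu$ then comes from Lemma \ref{Stability} applied to $L[E]|\mu$; the hypothesis on $\cf(\gamma)\neq\crit(F)$ required there when $\alpha=\mu$ is exactly the content of the $\mu$-stability of $L[E]|\mu$ above $\kappa$ combined with $\crit(E_{\beta}^{\Tt}) \geq \kappa$. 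Finally I re-establish the Lemma \ref{IterationBounds} agreement at $\beta+1$ by the same case split as in its proof.

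The principal obstacle is the drop-free case, in which $\cM_{\xi}^{\Tt}$ is a proper class whereas $\cM_{\xi}^{\Uu}$ has height exactly $\mu$: here I must confirm that the normality conditions at stage $\beta$ survive the passage from $\cM_{\xi}^{\Tt}$ to its $\mu$-truncation, that is, that $E_{\beta}^{\Uu}$ remains total on $\cM_{\xi}^{\Uu}$ and that $\tau_{\beta}^{\Tt}$ still equals $(\crit(E_{\beta}^{\Uu})^{+})^{\cM_{\xi}^{\Uu}||\mu}$. Both points reduce to the observation that $\mathcal{P}(\crit(E_{\beta}^{\Tt}))\cap\cM_{\xi}^{\Tt}$ and $\tau_{\beta}^{\Tt}$ all sit below $\mu$, where the two models coincide by the inductive hypothesis, so the truncation loses no relevant information.
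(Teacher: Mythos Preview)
Your proposal is correct and follows essentially the same approach as the paper: both build $\Uu$ by induction on $\beta<\lh(\Tt)$, copying the tree structure and extender indices from $\Tt$, and at each step invoke Lemma~\ref{IterationBounds} (applied to $\Tt\restriction(\beta+1)$, which lives on $L[E]|\mu$ by the inductive hypothesis) to guarantee that $\cM_\beta^{\Uu}$ agrees with $\cM_\beta^{\Tt}$ up to $\nu_\beta^{\Tt}<\mu$, so that $E_\beta^{\Tt}$ is indeed on the sequence of $\cM_\beta^{\Uu}$. Your write-up is more explicit than the paper's about the handling of $\eta_\beta^{\Uu}$ in the drop-free case and about wellfoundedness of $\cM_{\beta+1}^{\Uu}$, but these are routine elaborations of the same argument rather than a different route.
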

\begin{proof} 	We define recursively an iteration tree $\mathcal{U}$ on $\cM|\mu$ such that $\lh(\Uu) \leq \lh(\Tt)$ as follows: If $\Uu|\beta$ is defined, $\beta\in \Tt$, $\cM^{\Uu}_{\beta} \cap \ord \geq \nu_{\beta}^{\Tt}$ and $\cM_{\beta}^{\Uu}||\nu_{\beta}^{\Tt} = \cM_{\beta}^{\Tt}||\nu_{\beta}^{\Tt}$, then we let $\nu_{\beta}^{\Uu}=\nu_{\beta}^{\Tt}$, otherwise we let $\nu_{\beta}^{\Uu}$ be undefined and $lh(\Uu)=\beta$.
	
	Suppose $\beta \in \lh(\Tt)$, $\beta+1 < \lh(\Tt)$ and $\beta < \lh(\Uu)$, we will verify that $\beta+1 < \lh(\Uu)$. 
	
    $\blacktriangleright$ If $D^{\Tt}\cap [0,\beta]_{T}=\emptyset$, by Lemma \ref{IterationBounds} applied to $\Tt\restriction \beta+1$ we have $ \cM_{\beta}^{\Tt}|\mu = \cM_{\beta}^{\Uu}$.
    
    Hence $\nu_{\beta}^{\Tt} < \mu = \cM_{\beta}^{\Uu}\cap \ord $ and $\cM_{\beta}^{\Uu}||\nu_{\beta}^{\Tt}= \cM_{\beta}^{\Tt}||\nu_{\beta}^{\Tt}$.
    
   $\blacktriangleright$ If $D^{\Tt}\cap [0,\beta]_{T} \neq \emptyset $, then by Lemma \ref{IterationBounds} applied to $\Tt\restriction \beta+1$ we have  $\cM_{\beta}^{\Tt}=\cM_{\beta}^{\Uu}$.
    
    Therefore $\nu_{\beta}^{\Tt}\in \cM_{\beta}^{\Uu}\cap \ord$ and  $\cM_{\beta}^{\Uu}||\nu_{\beta}^{\Tt}=\cM_{\beta}^{\Uu}||\nu_{\beta}^{\Tt}$. \end{proof}

\begin{remark}\label{rmk1}\begin{enumerate}
		\item[(a)]	Suppose that $V$ is an extender model $L[E]$, $\lambda$ is a cardinal  and  $\Tt$ is an iteration tree on $L[E]$ such that $sup_{\alpha \in \lh(\Tt)}\nu_{\alpha}^{\Tt} < \lambda^{+}$. Then by Lemma \ref{Lives} for $\mu:= \lambda^{++L[E]}$ we have that $\Tt$ lives on $L[E]|\mu$. Notice that $L[E]|\mu$ and $\Tt$ are in the hypothesis of Lemma \ref{StablePM}. So in particular if $\mathcal{U}= \Tt\restriction (L[E]|\mu)$ is the restriction of $\Tt$ to $L[E]|\mu$, then for all $\beta \in \lh(\mathcal{U})$ it follows that $\cM_{\beta}^{\mathcal{U}} \cap \ord \leq \mu$. 
		\item[(b)] Our hypotheses on Lemma \ref{IterationBounds} are optimal in the following sense: suppose that $\mu$ is a regular cardinal and there is $\gamma <\mu$ such that $$(\gamma \text{ is the largest cardinal} \wedge \cf(\gamma) >\kappa)^{L[E]|\mu}.$$
		If there is $\beta \in \mu$ such that $(crit(E_{\beta})=cf(\gamma))^{L[E]}$ and $E_{\beta}$ is a total measure in $\cf^{L[E]}(\gamma)$, then $Ult_{0}(L[E]|\mu,E_{\beta}) \cap \ord > \mu$. 
		\item[(c)] In Lemma \ref{CES} we show that we can not drop the hypothesis that $L[E]|\mu$ is $\mu$-stable above $\kappa$ in the statement of Theorem A. In the proof of Lemma \ref{CES} we use Remark \ref{rmk1} (b).
	\end{enumerate}
\end{remark}

\begin{lemma}\cite[Lemma 1.1]{SchALCM} \label{IS} Let $\mathcal{M} = \langle J_{\alpha}^{E}, \in, E, F\rangle$ be an iterable premouse, where $ F\neq \emptyset$ . Suppose that for no $\mu \leq \mathcal{M} \cap OR$ do we have \[ \mathcal{J}_{\mu}^{\mathcal{M}} \models ZFC + \ \text{there is a Woodin cardinal}. \] Set $\kappa = crit(F)$ and let $ \xi \in (\kappa,\rho_{1}(\mathcal{M}))$. Then there is some $\tilde{\nu} \in (\xi, \xi^{+\mathcal{M}})$ with $crit(E_{\tilde{\nu}}) = crit(F)$. 
\end{lemma}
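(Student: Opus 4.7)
The plan is to locate the desired extender as a proper initial segment of the top extender $F$, using Jensen's initial segment condition (ISC).

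First, I would set up the sub-extender. Since $F$ is the top extender of $\mathcal{M}$, standard premouse theory in $\lambda$-indexing gives that $\lambda(F) = i_F(\kappa)$ is the largest cardinal of $\mathcal{M}$ below $\mathcal{M}\cap\ord$, and $\rho_1(\mathcal{M}) \leq \lambda(F)$. Consequently $\xi < \lambda(F)$ and in particular $\xi^{+\mathcal{M}} \leq \lambda(F)$. Let $\tau^{*}$ be the least generator of $F$ with $\tau^{*} \geq \xi$; since the generators form an unbounded subset of $\lambda(F)$ and $\xi$ is a cardinal of $\mathcal{M}$ (being below $\rho_1(\mathcal{M})$), we have $\xi \leq \tau^{*} < \xi^{+\mathcal{M}}$. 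Let $G$ denote the trivial completion of $F \restriction \tau^{*}$: this is a proper sub-extender of $F$ with $\crit(G) = \kappa$ and $\lambda(G) = \tau^{*}$.

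Second, I would invoke ISC on $F$. Under the hypothesis that $\mathcal{M}$ is iterable and no level of $\mathcal{M}$ models $\zfc$ plus a Woodin cardinal, Jensen's ISC applies: the trivial completion of any proper initial segment of $F$ appears on the coherent sequence $E$ of $\mathcal{M}$, indexed according to the $\lambda$-indexing convention. This places $G = E_{\tilde{\nu}}^{\mathcal{M}}$ where
\[
\tilde{\nu} \;=\; i_G(\kappa)^{+\Ult_0(\mathcal{M}||\tilde{\nu},\,G)}.
\]
Since $\crit(G) = \crit(F) = \kappa$, this immediately yields $\crit(E_{\tilde\nu}) = \kappa$ as required.

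The main obstacle is the index calculation $\xi < \tilde{\nu} < \xi^{+\mathcal{M}}$. For the lower bound, the choice of $\tau^{*}$ as a genuine generator of $F$ forces $i_G(\kappa) \geq \tau^{*} \geq \xi$, hence $\tilde{\nu} > \xi$. For the upper bound, since $\tau^{*} < \xi^{+\mathcal{M}}$ and the ultrapower $\Ult_0(\mathcal{M}||\tilde{\nu}, G)$ is generated as a $\Sigma_1$-Skolem hull by parameters from $\tau^{*}$ and functions on $\kappa$ coded into $\mathcal{M}||\tilde\nu$, a standard cardinality argument in $\mathcal{M}$ (using that $\xi$ is a cardinal of $\mathcal{M}$ and $\xi > \kappa$) gives $i_G(\kappa) < \xi^{+\mathcal{M}}$, so $\tilde{\nu} < \xi^{+\mathcal{M}}$. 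The no-Woodin hypothesis and the iterability of $\mathcal{M}$ are exactly what ensure ISC applies in the clean form above, without type-Z or similar anomalies disrupting the placement of $G$ on the $E$-sequence.
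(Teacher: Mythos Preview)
The paper does not give its own proof of this lemma; it is simply cited from \cite{SchALCM}. So there is no ``paper's proof'' to compare against, and I will evaluate your argument on its own merits. Your overall strategy---extract a proper initial segment of $F$ and invoke the initial segment condition to place it on the $E$-sequence---is indeed the standard route, and is the idea behind Schindler's original argument.

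There is, however, a genuine gap. Your assertion that ``$\xi$ is a cardinal of $\mathcal{M}$ (being below $\rho_1(\mathcal{M})$)'' is false: an ordinal below $\rho_1(\mathcal{M})$ need not be an $\mathcal{M}$-cardinal. More importantly, even granting that $\xi$ were a cardinal, you have not explained why the least generator $\tau^*\ge\xi$ lies below $\xi^{+\mathcal{M}}$; the mere fact that generators are unbounded in $\lambda(F)$ says nothing about their distribution below any particular $\mathcal{M}$-cardinal. What is actually needed is the stronger statement that the generators of $F$ are unbounded in $\xi^{+\mathcal{M}}$. This does hold, and the hypothesis $\xi<\rho_1(\mathcal{M})$ is exactly what makes it work: since $\rho_1(\mathcal{M})$ is an $\mathcal{M}$-cardinal, $\xi<\rho_1(\mathcal{M})$ forces $\xi^{+\mathcal{M}}\le\rho_1(\mathcal{M})$. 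Now if the generators below $\xi^{+\mathcal{M}}$ were bounded by some $\delta<\xi^{+\mathcal{M}}$, then every ordinal below $\xi^{+\mathcal{M}}$ would be of the form $[a,f]_F$ with $a\in[\delta]^{<\omega}$ and $f\in J^{E}_{\kappa^{+\mathcal{M}}}$; the map $(a,f)\mapsto[a,f]_F$ is $\Sigma_1(\mathcal{M})$ (as $F$ is the top predicate), yielding a $\Sigma_1(\mathcal{M})$ surjection from an ordinal of $\mathcal{M}$-cardinality $<\xi^{+\mathcal{M}}$ onto $\xi^{+\mathcal{M}}$. This contradicts $\xi^{+\mathcal{M}}\le\rho_1(\mathcal{M})$. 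Once you have a generator $\tau^*\in[\xi,\xi^{+\mathcal{M}})$, the rest of your index computation goes through essentially as written.
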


\begin{lemma}\label{Oo} Suppose that $V$ is a proper class premouse  $L[E]$ that is iterable. Let  $ \kappa < \mu $ be cardinals. Then 	\begin{gather*}   O(\kappa) > \mu \ \longleftrightarrow \ o(\kappa) > \mu.
	\end{gather*} Moreover, 
	\begin{gather*} O(\kappa) = \mu  \longleftrightarrow \  o(\kappa) = \mu. 
	\end{gather*} 
\end{lemma}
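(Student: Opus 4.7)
Writing $S_\kappa := \{\beta \mid \crit(E_\beta) = \kappa\}$, so that $o(\kappa) = \otp(S_\kappa)$ and $O(\kappa) = \sup S_\kappa$, I would begin by noting the general inequality $\otp(S) \le \sup S$ for any set $S$ of ordinals. This immediately gives the implications $o(\kappa) > \mu \Rightarrow O(\kappa) > \mu$ and $o(\kappa) = \mu \Rightarrow O(\kappa) \ge \mu$.

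For the converse direction, the plan is to establish a density claim: that $O(\kappa) \ge \mu$ forces $\otp(S_\kappa \cap \mu) = \mu$. Granting this, the lemma drops out: $O(\kappa) > \mu$ yields some $\beta \in S_\kappa$ with $\beta > \mu$ on top of the density, so $\otp(S_\kappa) \ge \mu + 1 > \mu$; while $O(\kappa) = \mu$ gives $o(\kappa) \ge \mu$, which combined with the trivial $o(\kappa) \le O(\kappa) = \mu$ yields $o(\kappa) = \mu$.

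The density claim will be proved by invoking Lemma \ref{IS}. Pick $\beta \in S_\kappa$ with $\beta \ge \mu$ (in the strict case) or with $\beta$ cofinal in $\mu$ (in the equality case), and set $N := L[E]||\beta$. Since $L[E]$ is iterable, so is its initial segment $N$, and $N$'s top extender $E_\beta$ has critical point $\kappa$. Lemma \ref{IS} then supplies, for every $\xi \in (\kappa, \rho_1(N))$, some $\tilde\nu \in (\xi, \xi^{+N})$ with $\crit(E_{\tilde\nu}) = \kappa$. Iterating this inside a single successor-cardinal interval $(\eta, \eta^{+N})$ of $N$ — each new $\xi$ chosen just above the previously produced $\tilde\nu$, and at limit stages taking suprema (which stay below $\eta^{+N}$ by regularity) — produces a strictly increasing sequence of members of $S_\kappa$ of length $\eta^{+N}$, and unioning these disjoint pieces over cardinals $\eta < \mu$ gives $\otp(S_\kappa \cap \mu) = \mu$.

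The main obstacle is ensuring that Lemma \ref{IS} can be run with $\xi$ ranging through all of $(\kappa, \mu)$, i.e.\ that $\rho_1(N) \ge \mu$. By Jensen indexing, $\lambda(E_\beta)$ is the largest cardinal of $N$, and since $\mu$ is a cardinal of $L[E]$ below $\beta$ (hence of $N$), this already forces $\mu \le \lambda(E_\beta)$. The technically delicate ingredient is then the standard fine-structural fact that for an active Jensen-indexed $L[E]$-premouse one has $\rho_1(N) \ge \lambda(E_\beta)$; granting it, $\rho_1(N) \ge \mu$ and the density argument runs unimpeded.
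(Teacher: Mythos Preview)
Your argument for the strict inequality $O(\kappa) > \mu \Rightarrow o(\kappa) > \mu$ is correct and follows the paper's route: pick $\beta \in S_\kappa$ with $\beta > \mu$ and apply Lemma~\ref{IS} to $N = L[E]||\beta$. (One simplification: the paper obtains $\rho_1(N) \ge \chi$ for each $V$-cardinal $\chi < \beta$ directly from soundness of $N$ --- a $\Sigma_1(N)$ surjection from $\rho_1(N)$ onto $\chi$ would collapse $\chi$ in $V = L[E]$ --- so the detour through $\rho_1(N) \ge \lambda(E_\beta)$ is unnecessary.)

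The genuine gap is in the equality case $O(\kappa) = \mu$. Here every $\beta \in S_\kappa$ lies strictly below $\mu$ (extenders are not indexed at cardinals), so your key step ``$\mu$ is a cardinal of $L[E]$ below $\beta$'' is simply false; indeed $\rho_1(N) \le N \cap \ord = \beta < \mu$, so the stated target $\rho_1(N) \ge \mu$ is impossible and the density argument cannot reach all of $(\kappa,\mu)$ from any single $N$. The paper repairs this by splitting on the shape of $\mu$: if $\mu = \theta^+$ is a successor, then $S_\kappa$ is cofinal in the regular cardinal $\mu$, whence $\otp(S_\kappa) = \mu$ directly without invoking Lemma~\ref{IS} at all; if $\mu$ is a limit cardinal, one applies the already-proved strict case to each regular $\chi \in (\kappa,\mu)$ to get $o(\kappa) > \chi$, hence $o(\kappa) \ge \mu$, and the reverse inequality $o(\kappa) \le O(\kappa) = \mu$ finishes.
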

\begin{proof}  Notice that if $X \subseteq \ord$, then $\otp(X) \leq \sup(X)$. From this general fact it follows that $o(\kappa) > \mu$ implies $O(\kappa) > \mu$. For the other direction we split the analysis into two cases. 
	
	$\blacktriangleright$ Suppose first that $ \mu$ is a limit cardinal and  $O(\kappa) > \mu $. We will verify that $o(\kappa) > \mu$. For that we show that given a regular cardinal $\chi < \mu$  such that $\kappa < \chi$ the following holds:	\begin{gather}\label{supI}  sup(\{ \beta < \chi\mid crit(E_{\beta}) =\kappa  \}) = \chi.
	\end{gather} 
	 
	Let $  \alpha > \chi$ be such that $crit(E_{\alpha})=\kappa$ and let $ \mathcal{M} =\mathcal{J}_{\alpha}^{L[E]}$. Then $ \rho_{1}(\mathcal{M}) \geq \chi.$ 
	
	Given $ \xi < \chi$, by Lemma \ref{IS} there is $ \tilde{\xi}  \in (\xi,\xi^{+\mathcal{M}}) $ such that $ crit(E^{\mathcal{M}}_{\tilde{\xi}}) = \kappa $. As $ \xi $ was arbitrary the equality in \eqref{supI} follows. 
	
	Thus \eqref{supI} holds for any regular cardinal $ \chi $ such that $\kappa < \chi < \mu$. Therefore
	\begin{gather*} |\{ \beta < \mu \mid  crit(E_{\beta}) = \kappa \}| = \mu. 
	\end{gather*}  
	
    Then  $ \otp(\{ \beta < \mu \mid  crit(E_{\beta}) = \kappa \}) \geq \mu$.  Notice that 
    \begin{gather*} o(\kappa) =\otp(\{\beta < \mu \mid \crit(E_{\beta})=\kappa\} \cup \{\beta \geq \mu \mid \crit(E_{\beta})=\kappa\} ) =  \\ \otp(\{\beta < \mu \mid \crit(E_{\beta})=\kappa\}) \oplus \otp(\{\beta \geq \mu \mid \crit(E_{\beta})=\kappa\}) \end{gather*} where $\oplus$ represents the ordinal sum. As $O(\kappa)> \mu$ we have $$\otp(\{ \beta \geq \mu \mid  crit(E_{\beta}) = \kappa \}) > 0 $$ Therefore $o (\kappa) \geq \mu+1 > \mu$.

	$\blacktriangleright$ Suppose $\mu = \theta^{+}$ and suppose $O(\kappa)> \mu$. Fix $\alpha$ such that $ \alpha > \mu $ with $\crit(E_{\alpha}^{\cM}) = \kappa$ and consider $\mathcal{M} = \mathcal{J}_{\alpha}^{L[E]}$. We have $\rho_{1}(\mathcal{M}) \geq \mu$ and given $\xi \in (\kappa,\mu)$, by Lemma \ref{IS}, there is $\tilde{\xi} \in (\xi,\xi^{+\mathcal{M}})$ such that $ crit( E_{\tilde{\xi}}^{\mathcal{M}}))= \kappa$. Hence 
	\begin{gather} \label{SupMu} sup( \{ \beta < \mu \mid crit(E_{\beta}) = \kappa \}) = \mu. 
	\end{gather}  Therefore  \eqref{SupMu} with $O(\kappa) > \mu$ implies that $o(\kappa) > \mu$. 
	
	For the second part, suppose $o(\kappa) = \mu$, then  $O(\kappa) \leq \mu$, otherwise by the first part we would have $ o(\kappa) > \mu$. Hence $O(\kappa) =\mu$. For the other direction  again we split the analysis into two cases:
	
	$\blacktriangleright$ Suppose $ O(\kappa)=\mu$ and $\mu$ is a limit cardinal. Given $\chi$ a regular cardinal, such that $\kappa < \chi < \mu$ we have  by the first part of the lemma that $o(\kappa) > \chi$. Hence $o(\kappa) \geq \mu$ and thus $o(\kappa) = \mu$.  
	
	$\blacktriangleright$ Suppose $O(\kappa)=\mu$ and $ \mu = \theta^{+}$ for some cardinal $\theta$. Since $\mu$ is a regular cardinal it follows that  $o(\kappa) = \mu$.  
\end{proof}

\begin{lemma} \label{gap} Suppose $V$ is an extender model $L[E]$ which is iterable. Let $ \kappa$ and $\mu$ be cardinals, such that $\kappa < 
	\mu$. Suppose that $O(\kappa) \leq \mu$, $\{ \alpha < \kappa \mid O(\alpha) > \mu \} $ is bounded in $\kappa$ and $\kappa < O(\kappa)$ \footnote{For example, when there exists a total measure indexed on $E$ with critical point $\kappa$ we have $\kappa^{+} < O(\kappa)$.} . Let
	\begin{gather}
	\beta^{*} = sup \{ \ \alpha < \kappa  \mid O(\alpha) >\mu \ \}.
	\end{gather}
	and 
	\begin{gather}
	\Theta = sup \{ \ O(\alpha) \mid  \beta^{*} < \alpha \leq  \kappa  \ \} \leq \mu
	\end{gather}
	
	Then there is no $\eta \in (\beta^{*},\Theta] $ such that $O(\eta) > \Theta$.
	
\end{lemma}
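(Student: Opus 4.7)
My plan is to prove the statement by contradiction: suppose there exists $\eta \in (\beta^*,\Theta]$ with $O(\eta) > \Theta$, and choose $\eta$ least such. If $\eta \in (\beta^*,\kappa]$, then the very definition $\Theta = \sup\{O(\alpha) \mid \alpha \in (\beta^*,\kappa]\}$ yields $O(\eta) \leq \Theta$, immediately contradicting the choice of $\eta$. So the substantive case is $\eta \in (\kappa,\Theta]$, and by minimality every $\eta' \in (\kappa,\eta)$ satisfies $O(\eta') \leq \Theta$. I then fix an index $\beta > \Theta$ with $\crit(E_\beta) = \eta$, set $F := E_\beta$ and $\mathcal{M} := L[E]||\beta$, so that $F$ is the top extender of $\mathcal{M}$; iterability of $L[E]$ transfers to $\mathcal{M}$.

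The heart of the plan is a reflection argument through the ultrapower by $F$. Let $j \colon L[E] \to N$ denote the corresponding ultrapower embedding; then $\crit(j) = \eta$, $j$ is the identity on ordinals $\leq \beta^*$, and by coherence of the extender sequence $N||\beta = L[E]||\beta$, so $F$ itself sits on the $N$-sequence at index $\beta$ and therefore $O^N(\eta) \geq \beta > \Theta$. For the opposing inequality, combining the minimality of $\eta$ with the boundedness hypothesis $\{\alpha < \kappa \mid O(\alpha) > \mu\} \subseteq \beta^* + 1$ and the fact that $O(\alpha) \leq \Theta$ for $\alpha \in (\beta^*,\kappa]$ (by the definition of $\Theta$) yields
\[ \{\alpha < \eta \mid O(\alpha) > \Theta\} \subseteq \beta^* + 1. \]
Applying $j$, using $j(\beta^* + 1) = \beta^* + 1$, and specializing the image statement in $N$ at $\alpha = \eta$ (which lies in $(\beta^*, j(\eta))$ and hence not in $\beta^* + 1$), forces $O^N(\eta) \leq j(\Theta)$. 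The proof therefore reduces to showing $j(\Theta) < \beta$, for then we would obtain the contradiction $\beta \leq O^N(\eta) \leq j(\Theta) < \beta$.

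The cases $\Theta < \eta$ (where $\Theta < \crit(j)$, so $j(\Theta) = \Theta < \eta < \beta$) and $\Theta = \eta$ (where Jensen $\lambda$-indexing gives $\beta = \lambda(F)^{+N} = j(\eta)^{+N}$, whence $j(\Theta) = j(\eta) < \beta$) are immediate. The main obstacle is the remaining case $\eta < \Theta \leq \mu$, in which $j(\Theta)$ can a priori exceed $\beta$. I would address it either by (i) passing from the normal-measure ultrapower to the full-extender ultrapower $j_F$ and bounding $j_F(\Theta)$ via $\lambda(F) < \beta$ together with cofinality considerations for $\Theta$ inside $\mathcal{M}$; or (ii) iterating the reflection by applying Lemma~\ref{IS} to $\mathcal{M}$ to produce densely many extenders on the $\mathcal{M}$-sequence with critical point $\eta$ at indices below $\beta$, and using these extra witnesses to reflect the counterexample down to some $\eta'' \in (\kappa, \eta)$ with $O(\eta'') > \Theta$, contradicting the minimality of $\eta$.
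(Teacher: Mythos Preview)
Your reduction to the case $\eta \in (\kappa,\Theta]$ via minimality is clean, but the core reflection step contains a genuine error. You assert that ``by coherence of the extender sequence $N||\beta = L[E]||\beta$, so $F$ itself sits on the $N$-sequence at index $\beta$ and therefore $O^N(\eta) \geq \beta$.'' This is false: coherence gives only $N|\beta = L[E]|\beta$ (agreement \emph{strictly below} $\beta$, without the top predicate). In Jensen indexing $\beta = \lambda(F)^{+N} = j(\eta)^{+N}$ is a cardinal of $N$, so $E_\beta^N = \emptyset$. The extender $F$ is precisely what gets ``used up'' by the ultrapower; it does not reappear on the $N$-sequence at $\beta$. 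Your contradiction $\beta \leq O^N(\eta) \leq j(\Theta) < \beta$ therefore never gets off the ground, since the lower bound $O^N(\eta) \geq \beta$ is not available.

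Even setting this aside, the case $\eta < \Theta$ --- which you correctly identify as the main obstacle --- is not actually argued: you only sketch two possible routes without executing either, and route~(ii) as stated (``reflect the counterexample down to some $\eta'' \in (\kappa,\eta)$'') would need exactly the kind of extender-production machinery the paper supplies via Lemma~\ref{IS}.

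The paper's proof avoids bounding $O^N(\eta)$ altogether. Instead it reflects the $\Sigma_0$ fact that $\Theta$ (or the interval up to $\Theta$) is densely populated by extender indices with critical point in $(\beta^*,\kappa]$ into the ultrapower $\mathcal{N}$, obtaining such an index $\gamma$ with $\Theta < \gamma < \pi_{E_\alpha}(\Theta)$. When $\gamma$ lands above $\alpha$, Lemma~\ref{IS} is invoked to pull the critical point down to an index $\gamma' \in (\Theta,\alpha)$; coherence $\mathcal{N}|\alpha = L[E]|\alpha$ then places $E_{\gamma'}$ on the $L[E]$-sequence, directly contradicting the definition of $\Theta$. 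The case split is on whether $\eta = \Theta$ or $\eta < \Theta$, and in the latter case on whether $\lambda(E_\alpha) \leq \Theta$ or $\lambda(E_\alpha) > \Theta$.
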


\begin{proof} Suppose otherwise.  Let $ E_{\alpha} $ be such that $ \alpha> \Theta $ and $ \eta:=crit(E_{\alpha}) \in (\beta^{*},\Theta]$ and let $\mathcal{M}^{*}$ be the largest initial segment of $L[E]$ where we can apply $E_{\alpha}$. Let $\cN:= Ult_{0}(\cM^{*},E_{\alpha})$ and $\pi:\cM^{*} \rightarrow \cN$ the ultrapower map. 
	We split the analysis into two cases and the second case will split into two subcases.
	
	$\bullet$ Suppose  $\eta = \Theta$. In this case\footnote{See Figure \ref{figure:Diagram1}.}  $$L[E]|\alpha\models ``\Theta \text{ is a cardinal}"$$ and hence $E_{\Theta}=\emptyset$, as extenders are not indexed at cardinals. Therefore  $$\Theta = sup (\{O(\alpha) \mid \beta^{*} < \alpha \leq \kappa \} \cap \Theta)$$  and $\Theta$ is a limit of ordinals $\gamma$ such that $\crit(E_{\gamma}) \in (\beta^{*},\kappa]$.
	
	Let $\varphi(\Theta,\beta^{*},\kappa)$ be the following formula:
	$$\forall \gamma < \Theta  ~ \exists \gamma' < \Theta ~ ( ~ \gamma < \gamma' ~ \wedge ~ \dot{E}_{\gamma'} \neq \emptyset ~ \wedge ~ crit(\dot{E}_{\gamma}) \in (\beta^{*},\kappa]  ~).$$
	Note that $\varphi(\Theta,\beta^{*},\kappa)$ is a $\Sigma_{0}$-formula in the language $\{\in,\dot{E}\}$.

	We have \begin{gather*} L[E] \models\varphi(\Theta,\beta^{*},\kappa),
	\end{gather*}  
	and therefore
	\begin{gather*} \mathcal{M}^{*} \models \varphi(\Theta,\beta^{*},\kappa).
	\end{gather*}

		Notice that $O(\kappa) > \kappa$ implies $\Theta \geq O(\kappa) > \kappa$. Hence by $\Sigma_{1}$-elementarity of $\pi_{E_{\alpha}}$ we have:
		\begin{gather*}  \cN \models \varphi(\pi_{E_{\alpha}}(\Theta),\underbrace{\beta^{*}}_{=\pi_{E_{\alpha}}(\beta^{*})},\underbrace{\kappa}_{ = \pi_{E_{\alpha}}(\kappa)}).
		\end{gather*}
		
		 Thus, in $\cN$, $ \pi_{E_{\alpha}}(\Theta)$ is a limit of indexes of extenders with critical points in the interval $ (\beta^{*},\kappa]$. Notice that as $\crit(E_{\alpha})= \Theta$ then $\Theta < \pi_{E_{\alpha}}(\Theta)$. Therefore there is $\gamma$ such that 
		 $\Theta < \gamma < \pi_{E_{\alpha}}(\Theta)$  and $\crit(E^{\cN}_{\gamma}) \in (\beta^{*},\kappa]$. 
		 
		 As $\cN|\alpha= \cM^{*} | \alpha$ and $\alpha= \pi_{E_{\alpha}}(\Theta)^{+\cN}$, it follows that $E_{\gamma}^{\cN}=E_{\gamma}$. 
		 But $\Theta < \gamma $ and $\crit(E_{\gamma}) \in (\beta^{*},\kappa]$ contradict the definition of $\Theta$.
		 
	
	\begin{figure*} 
		\centering
		\begin{tikzpicture}[scale=0.7]
		
		\draw (0.9,0)--(1.1,0) node {};
		\draw (-0.15,0) node {\footnotesize{$\alpha$}};
		\draw (0,-2) node {\footnotesize{$\eta = \Theta$}};
		\draw (0.9,-2)--(1.1,-2) node {};
		\draw (0.8,0)--(0.7,0);
		\draw (0.7,-2)--(0.7,0);
		\draw (0.8,-2)--(0.7,-2);
		\draw (1,-2) node{};
		
		\draw (5.1,-2) node {\footnotesize{$\eta=\Theta$}};
		\draw (3.9,-2)--(4.1,-2) node {};
		\draw (5,-2) node{};
		\draw (6,-0.5) node {\footnotesize{$\pi_{E_{\alpha}}(\eta) = \lambda(E_{\alpha}) $}};
		\draw (3.9,-0.5)--(4.1,-0.5) node {};
		\draw (5.1,1) node{};
		\draw (0,-3) node {\footnotesize{$\kappa$}};
		\draw (3.9,-3)--(4.1,-3) node {};
		\draw (3.9,-4)--(4.1,-4) node {};
		\draw (0.9,-3)--(1.1,-3) node {};
		\draw (0.9,-4)--(1.1,-4) node {};
		\draw (5.1,-3) node{\footnotesize{$\kappa$}};
		\draw (0,-4) node{\footnotesize{$\beta^{*}$}};
		\draw (5.1,-4) node{\footnotesize{$\beta^{*}$}};
		\draw (1,0.5)--(1,-4.5);
		\draw (4,0.5)--(4,-4.5);
		
		\draw  (1.2,-1)--(1.3,-1);
		\draw  (1.3,-1)--(1.3,-3.7);
		\draw  (1.2,-3.7)--(1.3,-3.7);
		\draw  (4.2,-1)--(4.3,-1);
		\draw  (4.3,-1)--(4.3,-3.7);
		\draw  (4.2,-3.7)--(4.3,-3.7);
		\draw[->,semithick] (1,-2)--(4,-0.5);	\end{tikzpicture}
		\caption{\footnotesize{Case 1, Lemma \ref{gap}}} \label{figure:Diagram1}
	\end{figure*}
	
	$\bullet$ Suppose  $\eta < \Theta  $. 
	We split this case into two subcases (see Figures \ref{figure:Diagram2} and \ref{figure:Diagram3}):
	
	$\blacktriangleright $ Suppose $\eta < \Theta  $ and $ \lambda(E_{\alpha}) \leq \Theta$. 
	Then \begin{gather*}\pi_{E_{\alpha}}(\Theta) \geq \pi_{E_{\alpha}}(\lambda(E_{\alpha})) > \alpha,
	\end{gather*} and by $\Sigma_{0}$-elementarity there is a $\gamma \in (\alpha,\pi_{E_{\alpha}}(\Theta)) $ such that $E_{\gamma}^{\cN} \neq \emptyset$ and	\begin{gather*} crit(E_{\gamma}^{\cN}) \in (\beta^{*},\kappa]. 
	\end{gather*} 
		From $	 (\alpha\ \text{is a cardinal})^{\cN}$,
	it follows that $	\rho_{1}(\mathcal{J}^{\cN}_{\gamma}) \geq \alpha$ 	for any $\gamma \in (\alpha,\pi_{E_{\alpha}}(\Theta) ) $.
	
	Since $\alpha = \lambda(E_{\alpha})^{+\cN}$ and $\alpha > \Theta$, we have that \begin{gather*}\rho_{1}(\cN||\gamma) \geq \alpha = \Theta^{+\cN} > \Theta,
	\end{gather*} it follows by Lemma \ref{IS}  that there is $\gamma' \in (\Theta, \Theta^{+\cN}) $ such that  $E_{\gamma'}^{\cN} \neq \emptyset$ and $
	crit(E_{\gamma'}^{\cN}) = crit(E_{\gamma}^{\cN})$.	
	As $\Theta^{+\cN} \leq \alpha$ and  $\cM^{*}|\alpha= \cN|\alpha$, we have $E_{\gamma'}^{\cN}=E_{\gamma'}$, which contradicts the definition of $\Theta$. 
	
	\begin{figure*} 
		\centering
		\begin{tikzpicture}[scale=0.7]
		\draw (0,0.5) node {\footnotesize{$\alpha$}};
		\draw (0,-0.5) node {\footnotesize{$\Theta$}};
		\draw (0.9,-3)--(1.1,-3) node {};
		\draw (0.9,-2.5)--(1.1,-2.5) node {};
		\draw (0.8,0.5)--(0.7,0.5);
		\draw (0.7,-2.5)--(0.7,0.5);
		\draw (0.8,-2.5)--(0.7,-2.5);
		\draw (0,-2.5) node{\footnotesize{$ \eta $}};
		\draw (6,0.5) node{\footnotesize{$\Theta^{+} =\alpha \leq \rho_{1}(\cN||\gamma) $}};
		\draw (3.9,1)--(4.1,1) node{};
		\draw (4.5,1) node{\footnotesize{$\gamma$}};
		\draw (3.9,0.5) -- (4.1,0.5);
		\draw (3.9,-3)--(4.1,-3);
	    \draw (6.8,-0.5) node {\footnotesize{$ \lambda(E_{\alpha})= \pi_{E_{\alpha}}(\eta) \leq \Theta $}};
		\draw (5,-2) node{};
		\draw (5.1,1) node{};
		\draw (0.5,-3) node {\footnotesize{$\kappa$}};
		\draw (3.9,-0.5)--(4.1,-0.5) node {};
		\draw (3.9,-4)--(4.1,-4) node {};
		\draw (0.9,-0.5)--(1.1,-0.5) node {};
		\draw (0.9,-4)--(1.1,-4) node {};
		\draw (4.5,-3) node{\footnotesize{$\kappa$}};
		\draw (0.5,-4) node{\footnotesize{$\beta^{*}$}};
		\draw (4.5,-4) node{\footnotesize{$\beta^{*}$}};
		\draw (1,2)--(1,-4.5);
		\draw (4,2)--(4,-4.5);
		\draw (5.0,1.7) node{\footnotesize{$\pi_{E_{\alpha}}(\Theta)$}};
		\draw (3.9,1.7)--(4.1,1.7);
		\draw  (1.2,0)--(1.3,0);
		\draw  (1.3,0)--(1.3,-3.7);
		\draw  (1.2,-3.7)--(1.3,-3.7);
		\draw  (4.2,0)--(4.3,0);
		\draw  (4.3,0)--(4.3,-3.7);
		\draw  (4.2,-3.7)--(4.3,-3.7);
	    \draw[->] (1,-2.5)--(4,-0.5);
		\draw[->] (1,-0.5)--(4,1.7);
		\end{tikzpicture}
		\caption{\footnotesize{Case 2 (i), Lemma \ref{gap} }} \label{figure:Diagram2}
	\end{figure*}

	$\blacktriangleright $ Suppose $\eta < \Theta$ and   $\lambda(E_{\alpha}) > \Theta$.
	In this case $\pi_{E_{\alpha}}(\Theta) > \lambda(E_{\alpha}) > \Theta $. Then  for all $\gamma$ such that
	$\gamma \in (\lambda(E_{\alpha}), \pi_{E_{\alpha}}(\Theta))$ we have 
	\begin{gather}
	\rho_{1}(\mathcal{J}^{\cN}_{\gamma}) \geq \lambda(E_{\alpha}) > \Theta.
	\end{gather}
	
	Then like in case 2 (i) we can find an extender in the sequence of $L[E]$ that is indexed in the interval $(\Theta, \lambda(E_{\alpha}))$ with critical point in the interval $(\beta^{*},\kappa]$, contradicting the definition of $\Theta$.  
	
	\begin{center}
		\begin{figure*} 
			\centering
			\begin{tikzpicture}[scale=0.5] 
		
			\draw (-0.3,0.8) node {\footnotesize{$\alpha$}};
			\draw (0,-1.5) node {\footnotesize{$\Theta$}};
			\draw (5,-1.5) node {\footnotesize{$\Theta$}};
			\draw (3.9,-1.5)--(4.1,-1.5);
			\draw (0.9,-3)--(1.1,-3) node {};
			\draw (0.9,-2.5)--(1.1,-2.5) node {};
			\draw (0.8,0.8)--(0.7,0.8);
			\draw (0.7,-2.5)--(0.7,0.8);
			\draw (0.8,-2.5)--(0.7,-2.5);
			\draw (0,-2.5) node{\footnotesize{$ \eta $}};
			\draw (3.9,0.7)--(4.1,0.7) node{};
			\draw (4.5,0.7) node{\footnotesize{$\gamma$}};
	    	\draw (3.9,-3)--(4.1,-3);
			\draw (7,-0.2) node {\footnotesize{$ \lambda(E_{\alpha})= \pi_{E_{\alpha}}(\eta) > \Theta $}};
			\draw (5,-2) node{};
			\draw (5.1,1) node{};
			\draw (0.5,-3) node {\footnotesize{$\kappa$}};
			\draw (3.9,-0.2)--(4.1,-0.2) node {};
			\draw (3.9,-4)--(4.1,-4) node {};
			\draw (0.9,-1.5)--(1.1,-1.5) node {};
			\draw (0.9,-4)--(1.1,-4) node {};
			\draw (4.7,-3) node{\footnotesize{$\kappa$}};
			\draw (0.5,-4) node{\footnotesize{$\beta^{*}$}};
			\draw (4.5,-4) node{\footnotesize{$\beta^{*}$}};
			\draw (1,2)--(1,-4.3);
			\draw (4,2)--(4,-4.3);
			\draw (5.2,1.5) node{\footnotesize{$\pi_{E_{\alpha}}(\Theta)$}};
			\draw (3.9,1.5)--(4.1,1.5);
			\draw  (1.2,-1)--(1.3,-1);
			\draw  (1.3,-1)--(1.3,-3.7);
			\draw  (1.2,-3.7)--(1.3,-3.7);
			\draw  (4.2,-1)--(4.3,-1);
			\draw  (4.3,-1)--(4.3,-3.7);
			\draw  (4.2,-3.7)--(4.3,-3.7);
			\draw[->] (1,-2.5)--(4,-0.2);
			\draw[->] (1,-1.5)--(4,1.5);
			\end{tikzpicture}
			
			\caption{\footnotesize{Case 2 (ii), Lemma \ref{gap}}} \label{figure:Diagram3}
		\end{figure*}
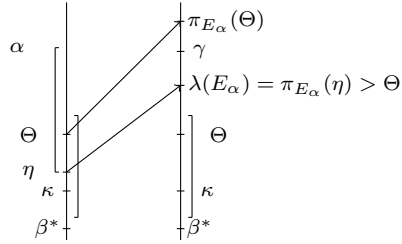
	\end{center}
\end{proof}

\section{Equivalence \label{Equivalence}}

In this section we prove Theorem A and Theorem B. We will need some results from core model theory before we start with the proofs of  Theorems A and B.

\begin{remark}
	Suppose there is no inner model with a Woodin cardinal and let $\cK$ be the core model. If $\kappa$ is an ordinal and $\Xi$ is a singular strong limit cardinal above $\kappa$, then by  (iii) of Theorem \ref{Knm} $$\cK|\Xi^{+}\models ``\Xi \text{ is the largest cardinal}".$$ By Definition~\ref{Stability} we have:
		\begin{enumerate}
			\item[(a)] $\cK|\Xi^{+}$ is $\Xi^{+}$-stable above $\kappa$, or
			\item[(b)]  there is $\beta \in \Xi^{+}$ such that $\beta$ is the least ordinal such that $E_{\beta}^{\cK}$ is a total measure over $\cK|\Xi^{+}$ with $\crit(E_{\beta}^{\cK}) = \cf^{\cK}(\Xi)$ and $cf^{\cK}(\Xi) \geq \kappa$.
		\end{enumerate}
\end{remark}

\begin{definition}
	Suppose there is no inner model with a Woodin cardinal and let $\cK$ be the core model. If $\kappa$ is an ordinal and $\Xi$ is a singular strong limit cardinal above $\kappa$ we say that $\cW$ is the \emph{ $(\Xi^{+},\kappa)$-stabilization of $\cK$} iff 
	\begin{enumerate}
		\item[(a)] $\cW = \cK|\Xi^{+}$ and $\cK|\Xi^{+}$ is $\Xi^{+}$-stable above $\kappa$, or
		\item[(b)] $\cW = \Ult_{0}(\cK,E_{\beta}^{\cK})|\Xi^{+}$ where $\beta$ is the least measure indexed in the sequence of $\cK$ that is total in $\cK|\Xi^{+}$ with $\crit(E_{\beta}^{\cK})=\cf^{\cK}(\Xi)$ and $\cf^{\cK}(\Xi)\geq\kappa$.
	\end{enumerate} 
\end{definition}

\begin{definition}
	Let $\cM$ be a premouse and $ \kappa$ an ordinal such that $\kappa \in \cM$. We say that $\kappa$ is a \emph{ strong cutpoint of $\cM$} iff for all $\alpha \leq \cM\cap \ord$  such that $\alpha > \kappa$ we have that either $E_{\alpha}^{\cM}=\emptyset$ or $\crit(E_{\alpha}^{\cM})> \kappa$.
\end{definition}

The following is a slight variation of \cite[Proposition 4.4]{Knm}.
\begin{lemma}{\cite[Proposition 4.4]{Knm}} \label{Universal} Let $\Omega$ be a regular cardinal and $\kappa \in \Omega$. Suppose that $\cW$ is  $\Omega$-stable above $\kappa$, $(\Omega+1)$-iterable, $\kappa$ is a strong cutpoint of $\cW$ and $
	\cW $ has a largest cardinal. 
		Then for every sound premouse $\cM$ such that:
	\begin{itemize} 
		\item $\cM$ is $(\Omega+1)$-iterable,
		\item  $\cM \cap \ord < \Omega $,
	    \item $\cM||\kappa = \cW||\kappa$, 
	    \item $\kappa$ is a strong cutpoint of $\cM$,
	\end{itemize}
we have that there are\footnote{These iteration trees are obtained by the so called Comparison Lemma, see \cite[Section 3.2]{MR2768698} or \cite[Lemma 9.1.8]{MR1876087}.} iteration trees $\Tt$ and $\Uu$ on $\cW $ and $\cM$ respectively such that 
\begin{enumerate}
		\item[(a)] for $b^{\Uu}$ the main branch of $\Uu$ we have $D^{\Uu}\cap b^{\Uu}=\emptyset$, 
		\item[(b)] $\cM_{\infty}^{\Uu}$ is sound,
		\item[(c)] $\cM_{\infty}^{\Uu} \triangleleft \cM_{\infty}^{\Tt}$,
		\item[(d)] for every $\alpha \in \Tt$ and for every $\beta \in \Uu$ we have $\nu_{\alpha}^{\Tt},\nu_{\beta}^{\Uu} > \kappa$.	
\end{enumerate}
\end{lemma}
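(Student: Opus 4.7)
The plan is to apply the Comparison Lemma to $\cW$ and $\cM$ and then invoke the universality of $\cW$ (arising from its having a largest cardinal) to force the favorable outcome. Since both premice are $(\Omega+1)$-iterable, coiteration by least disagreement (see \cite[Lemma~9.1.8]{MR1876087}) produces iteration trees $\Tt$ on $\cW$ and $\Uu$ on $\cM$ that terminate at some stage $\leq \Omega$. The hypothesis that $\cW||\kappa = \cM||\kappa$ and that $\kappa$ is a strong cutpoint of both implies, by induction on the stages of comparison together with Fact~\ref{fact1}, that no disagreement between the iterates can occur at or below $\kappa$. Hence every extender used on either side has critical point $>\kappa$, so $\nu_{\alpha}^{\Tt},\nu_{\beta}^{\Uu} > \kappa$ for all relevant $\alpha,\beta$, proving (d).

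After successful coiteration, either (i) the main branch $b^{\Uu}$ does not drop and $\cM_{\infty}^{\Uu} \trianglelefteq \cM_{\infty}^{\Tt}$, or (ii) the main branch of $\Tt$ does not drop and $\cM_{\infty}^{\Tt} \trianglelefteq \cM_{\infty}^{\Uu}$. I would rule out (ii). Because $\cM\cap\ord < \Omega$, an induction on the stages of $\Uu$ (leaning on Lemma~\ref{StablePM} and the fact that each extender used has critical point $>\kappa$) yields $\cM_{\infty}^{\Uu}\cap\ord < \Omega$. If (ii) held, then $\pi_{0,\infty}^{\Tt}:\cW \to \cM_{\infty}^{\Tt}$ would be a fully elementary, hence cofinal, embedding into a premouse of height $<\Omega$. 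Let $\lambda$ denote the largest cardinal of $\cW$, so that $\pi_{0,\infty}^{\Tt}(\lambda)$ is the largest cardinal of $\cM_{\infty}^{\Tt}$. Analysing the clauses of $\Omega$-stability of $\cW$, one obtains a universality contradiction in each case: when $\cW\cap\ord = \Omega$ this is furnished directly by the regularity of $\Omega$ against cofinality of $\pi_{0,\infty}^{\Tt}$ into an ordinal $<\Omega$; when $\cW\cap\ord < \Omega$ one uses soundness together with the largest-cardinal hypothesis to derive the standard contradiction (as in \cite[Proposition~4.4]{Knm}).

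Having ruled out (ii), case (i) gives (a) and $\cM_{\infty}^{\Uu} \trianglelefteq \cM_{\infty}^{\Tt}$. To upgrade to $\triangleleft$, suppose toward contradiction that $\cM_{\infty}^{\Uu}=\cM_{\infty}^{\Tt}$; then neither main branch drops, and the universality argument of the previous paragraph, applied with the roles of $\cW$ and $\cM$ reversed (using $\cM\cap\ord < \Omega$ against the existence of the largest cardinal in $\cW$), supplies a contradiction. This establishes (c), whence (b) is immediate since every proper initial segment of a premouse is sound. The principal obstacle is the universality step: carefully combining each clause of the $\Omega$-stability definition with the largest-cardinal hypothesis on $\cW$ and the strong-cutpoint property at $\kappa$ to derive the contradiction in case (ii). This is the step that uses the specific hypotheses of the lemma in an essential way; the rest amounts to standard bookkeeping with coiteration.
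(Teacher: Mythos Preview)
The paper does not supply a proof of this lemma; it is merely cited from \cite[Proposition~4.4]{Knm}. Your sketch follows the standard universality-via-comparison argument that underlies that result, and the outline for (d) and for ruling out case~(ii) when $\cW\cap\ord=\Omega$ is correct: if the $\Tt$-side does not drop, then $\pi^{\Tt}_{0,\infty}$ is cofinal, so $\cM_{\infty}^{\Tt}\cap\ord\geq\Omega$, while the height bound from Lemma~\ref{StablePM} (applied to $\cM$, which is trivially $\Omega$-stable by clause~(1) since $\cM\cap\ord<\Omega$) gives $\cM_{\infty}^{\Uu}\cap\ord<\Omega$, contradiction.

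Two points deserve tightening. First, your ``roles reversed'' argument for strictness in (c) is not what actually happens: if $\cM_{\infty}^{\Uu}=\cM_{\infty}^{\Tt}$ and neither branch drops, you do \emph{not} swap the roles of $\cW$ and $\cM$; you use the \emph{same} cofinality argument as in case~(ii) to get $\cM_{\infty}^{\Tt}\cap\ord\geq\Omega$ while $\cM_{\infty}^{\Uu}\cap\ord<\Omega$. Second, your treatment of the subcase $\cW\cap\ord<\Omega$ is a genuine gap: the hypotheses as literally stated (Definition~\ref{stableDef} clause~(1) together with ``$\cW$ has a largest cardinal'') permit $\cW\cap\ord<\Omega$, and in that case there is no obvious universality contradiction---nothing prevents, say, $\cW=\cM$. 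In the paper's only application (Lemma~\ref{ISK}), $\cW$ is the $(\Xi^{+},\kappa)$-stabilization of $\cK$, which always has height exactly $\Omega=\Xi^{+}$; this is also the situation in \cite{Knm}. So the case you hand-wave does not arise in practice, but you should either note that $\cW\cap\ord=\Omega$ is implicitly assumed, or restrict your argument to that case rather than gesture at a ``standard contradiction'' that does not exist.
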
 
	The following lemma is standard but we include it for the reader's convenience.
\begin{lemma} \label{ISK}
	Suppose there is no inner model with a Woodin cardinal and let $\cK$ be the core model. Let $\kappa$ be an ordinal and $\cM$ a sound iterable premouse. Suppose 
	\begin{itemize}
		\item $\kappa$ is a strong cutpoint of $\cK$ and $\cM$,
		\item $\cM||\kappa = \cK||\kappa$, and
		\item $\rho_{\omega}(\cM)\leq \kappa$.
	\end{itemize}	
	Then $\cM \triangleleft \cK$.
\end{lemma}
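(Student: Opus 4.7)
The plan is to run comparison (Lemma~\ref{Universal}) between $\cM$ and a suitable portion of $\cK$, and then exploit soundness together with $\rho_{\omega}(\cM)\leq\kappa$ to show that the comparison is trivial on both sides above $\kappa$. First I would fix a regular cardinal $\Omega>\cM\cap\ord$ large enough that the $(\Omega,\kappa)$-stabilization $\cW$ of $\cK$ is well-defined, and observe that $\kappa$ remains a strong cutpoint of $\cW$: the only extender possibly used in passing from $\cK$ to $\cW$ has critical point $\cf^{\cK}(\Xi)\geq\kappa$, which still leaves no extender on $\cW$'s sequence above $\kappa$ with critical point $\leq\kappa$. Also $\cW||\kappa=\cK||\kappa=\cM||\kappa$, $\cW$ has a largest cardinal, and $\cW$ is $(\Omega+1)$-iterable since $\cK$ is fully iterable. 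Applying Lemma~\ref{Universal} then produces normal iteration trees $\Tt$ on $\cW$ and $\Uu$ on $\cM$ satisfying (a)--(d).

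The next step is to show that $\Uu$ is trivial, so that $\cM=\cM_{\infty}^{\Uu}\triangleleft\cM_{\infty}^{\Tt}$. By (a) the main branch of $\Uu$ does not drop, so $\pi:=\pi_{0,\infty}^{\Uu}\colon\cM\to\cM_{\infty}^{\Uu}$ is fully elementary; by (d) every extender used in $\Uu$ has critical point $>\kappa\geq\rho_{\omega}(\cM)$, so $\pi\restriction\rho_{\omega}(\cM)=\id$. As $\cM$ is sound by hypothesis and $\cM_{\infty}^{\Uu}$ is sound by (b), the standard solidity/minimality argument gives $\pi(p_{\omega}(\cM))=p_{\omega}(\cM_{\infty}^{\Uu})$; then $\cM_{\infty}^{\Uu}=h(\rho_{\omega}(\cM_{\infty}^{\Uu})\cup\{p_{\omega}(\cM_{\infty}^{\Uu})\})$ forces $\pi$ to be the identity, contradicting the existence of any critical point along the iteration. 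Hence $\Uu$ is trivial and, by (c), $\cM\triangleleft\cM_{\infty}^{\Tt}$.

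Finally, I would argue $\cM\triangleleft\cW$, whence $\cM\triangleleft\cK$. Let $\alpha\in\lh(\Tt)$ be minimal with $\cM\triangleleft\cM_{\alpha}^{\Tt}$; I claim $\alpha=0$. If $\alpha=\beta+1$, let $\xi=\pred_{T^{\Tt}}(\beta+1)$. Coherence (Fact~\ref{fact1}) yields $\cM_{\xi}^{\Tt}|\nu_{\beta}^{\Tt}=\cM_{\beta+1}^{\Tt}|\nu_{\beta}^{\Tt}$; if $\cM\cap\ord\leq\nu_{\beta}^{\Tt}$ this would put $\cM\triangleleft\cM_{\xi}^{\Tt}$, violating minimality, so $\cM\cap\ord>\nu_{\beta}^{\Tt}$. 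Then $\lambda(E_{\beta}^{\Tt})>\kappa$ is a cardinal of $\cM_{\beta+1}^{\Tt}$ strictly below $\cM\cap\ord$, and since $\cM\triangleleft\cM_{\beta+1}^{\Tt}$ it is a cardinal of $\cM$ as well. However, acceptability of $\cM$ combined with $\rho_{\omega}(\cM)\leq\kappa$ precludes any cardinal of $\cM$ strictly between $\kappa$ and $\cM\cap\ord$---a contradiction. The limit case is similar: either $\cM\cap\ord\leq\crit(\pi_{\xi,\alpha}^{\Tt})$ for some $\xi<_{T^{\Tt}}\alpha$, which by continuity pushes $\cM\triangleleft\cM_{\xi}^{\Tt}$ against minimality, or else the supremum of critical points along the branch below $\alpha$ produces a cardinal of $\cM_{\alpha}^{\Tt}$ strictly between $\kappa$ and $\cM\cap\ord$, again contradicting acceptability. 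Hence $\alpha=0$ and $\cM\triangleleft\cW\triangleleft\cK$. The main obstacle is this final step: one must verify that the Jensen $\lambda$-indexing forces $\lambda(E_{\beta}^{\Tt})$ to remain a cardinal of $\cM$, so that the hypothesis $\rho_{\omega}(\cM)\leq\kappa$ paired with acceptability can rule out any nontrivial use of extenders in $\Tt$ below the height of $\cM$.
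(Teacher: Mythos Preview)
Your overall plan matches the paper's: compare $\cM$ against a stabilization $\cW$ of $\cK$, use soundness plus $\rho_\omega(\cM)\leq\kappa$ to show $\Uu$ is trivial, then show $\Tt$ is trivial so that $\cM\triangleleft\cW$, and finally pass from $\cW$ to $\cK$. However, two steps in your execution do not go through as written.

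First, the sentence ``acceptability of $\cM$ combined with $\rho_{\omega}(\cM)\leq\kappa$ precludes any cardinal of $\cM$ strictly between $\kappa$ and $\cM\cap\ord$'' is false: the projectum governs what is definable \emph{over} $\cM$, not what surjections lie \emph{in} $\cM$, and a sound premouse with $\rho_\omega(\cM)=\omega$ can certainly have internal cardinals $\omega_1^{\cM},\omega_2^{\cM},\ldots$. The correct argument (and the one the paper uses) takes place in the \emph{extending} model: since $\cM\triangleleft\cM_{\infty}^{\Tt}$ and $\cM=\tilde h_\cM^{\,n}(\kappa\cup\{p_n^\cM\})$, the model $\cM_{\infty}^{\Tt}$ contains a surjection from $\kappa$ onto $\cM\cap\ord$, so $\cM_{\infty}^{\Tt}\models|\cM\cap\ord|\leq\kappa$; this contradicts $\nu_0^{\Tt}>\kappa$ being a cardinal of $\cM_\infty^\Tt$ with $\nu_0^\Tt\leq\cM\cap\ord$. (This also shows there is no need for your minimal-$\alpha$ detour, and note that your invocation of Fact~\ref{fact1} is misstated: coherence gives $\cM_\beta^\Tt|\nu_\beta^\Tt=\cM_{\beta+1}^\Tt|\nu_\beta^\Tt$, not $\cM_\xi^\Tt|\nu_\beta^\Tt=\cM_{\beta+1}^\Tt|\nu_\beta^\Tt$.)

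Second, your conclusion ``$\cM\triangleleft\cW\triangleleft\cK$'' fails when the stabilization is nontrivial: then $\cW=\Ult_0(\cK,E_\beta^{\cK})|\Xi^+$ is \emph{not} an initial segment of $\cK$. The paper closes this gap by observing that $\cW\models|\cM\cap\ord|\leq\kappa$ while $\beta$ is a $\cW$-cardinal above $\kappa$, so $\cM\cap\ord<\beta$; since $\cW|\beta=\cK|\beta$, this yields $\cM\triangleleft\cK$. You should also pick $\Omega$ of the specific form $\Xi^+$ with $\Xi$ a singular strong limit (so that weak covering guarantees $\cK|\Xi^+$ has largest cardinal $\Xi$, making the stabilization and Lemma~\ref{Universal} applicable); an arbitrary regular $\Omega$ does not suffice.
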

\begin{proof} Let $\kappa$ be as in the hypotheses of the lemma and let $\Xi$ be a singular strong limit cardinal such that $\cf(\Xi) > \kappa$. Let $\cW$ be the $(\Xi^{+},\kappa)$-stabilization of $\cK|\Xi^{+}$. 
	We can apply  Lemma \ref{Universal} to $\cM$, $\cW$ and $\kappa$. Let $\Tt$ and $\Uu$ be iteration trees given by  Lemma \ref{Universal} where $\Tt$ is on $\cW$ and $\Uu$ is on $\cM$. 
	
	\begin{claim} \label{CutPoint} For all $\alpha \in \lh(\Uu)$ we have $\crit(E_{\alpha}^{\Uu}) \geq \kappa$ and $\kappa$ is a strong cutpoint of $\cM_{\alpha}^{\Uu}$.
		\end{claim} 
	\begin{proof} We prove the claim by induction on  $\alpha \in \lh(\Uu)$. 
		Suppose that $\alpha \in \lh(\Uu)$ and that for all $\beta < \alpha$ we have that $\kappa$ is a strong cutpoint of $\cM_{\beta}^{\Uu}$ and $\crit(E_{\beta}^{\Uu})>\kappa$.
		
		$\blacktriangleright$ Suppose $\alpha$ is a limit ordinal. Let $\gamma \in [0,\alpha]_{T^{\Uu}}$ be large enough such that $D^{\Uu}\cap(\gamma,\alpha]_{T^{\Uu}}=\emptyset$ and hence $\dom(\pi_{\gamma,\alpha}^{\Uu})=\cM_{\gamma}^{\Uu}$, so $\pi_{\gamma,\alpha}^{\Uu}$ is not a partial map but has full domain.
		
		As $ \kappa < \crit(E_{\beta}^{\Uu}) $ for all $\beta < \alpha$, it follows that  $\pi_{\alpha,\beta}^{\Uu}(\kappa)=\kappa$. As  $\kappa$ is a strong cutpoint of $\cM_{\gamma}^{\Tt}$  it follows, by the $\Sigma_{1}$-elementarity of $\pi_{\gamma,\alpha}^{\Uu}$, that $\kappa$ is a strong cutpoint of $\cM_{\alpha}^{\Uu}$. Since $\nu_{\alpha}^{\Uu} > \kappa$, it then follows that $\crit(E_{\alpha}^{\Uu}) > \kappa$.
		
		$\blacktriangleright$ Suppose $\alpha = \gamma+1$ for some $\gamma \in \ord$. From our induction hypothesis we have that $\kappa$ is a strong cutpoint of $\cM_{\xi_{\alpha}^{\Uu}}^{\Uu}$, therefore $\kappa$ is a strong cutpoint of $\cM_{\xi_{\alpha}}^{\Uu}||\eta_{\alpha}^{\Uu}$. By the $\Sigma_{1}$-elementarity of $\pi_{\xi_{\alpha},\alpha}^{\Uu}\restriction (\cM_{\xi_{\alpha}}^{\Uu}||\eta_{\alpha}^{\Uu})$ it follows that $\kappa$ is a strong cutpoint of $\cM_{\alpha}^{\Uu}$. As $\nu_{\alpha}^{\Uu}> \kappa$, it follows that $\crit(E_{\alpha}^{\Uu})>\kappa$.
		\end{proof}

	\begin{claim} \label{Claim2}
		$\cM=\cM^{\Uu}_{\infty}$.
	\end{claim}
\begin{proof} Suppose not, and let $E_{\alpha}^{\Uu}$ be the first extender applied to $\cM_{0}^{\Uu}$ such that $\alpha+1 \in b^{\Uu}$, where $b^{\Uu}$ is the main branch of the iteration tree $\Uu$. By Claim \ref{CutPoint} it follows that $\crit(E_{\alpha}^{\Uu}) > \kappa$. 
	Let $n \in \omega$ be the least  such that $\rho_{n}(\cM) \leq \kappa$. By standard arguments $\rho_{n}(\cM_{\alpha+1}^{\Uu}) = \kappa$, but $$\crit(E_{\alpha}^{\Tt})\not\in \tilde{h}^{n}_{\cM_{\alpha}^{\Uu}}(\crit(E_{\alpha})\cup \{\pi_{0,\alpha+1}^{\Uu}(p_{n}^{\cM_{\alpha}^{\Uu}})\}) \supseteq \tilde{h}^{n}_{\cM_{\alpha}^{\Uu}}(\kappa \cup \{\pi_{0,\alpha+1}^{\Uu}(p_{n}^{\cM_{\alpha}^{\Uu}})\}).$$ Therefore $\cM_{\alpha+1}^{\Uu}$ is not $n$-sound, which contradicts the fact that every element in $b^{\Uu}$ is sound. Hence $\Uu$ is trivial and $\cM=\cM_{\infty}^{\Uu}$.  
	\end{proof}
	
	Notice that if $D^{\Tt}\cap b^{\Tt}=\emptyset$ then $\cW \cap \ord \geq \Xi^{+}$ and if $D^{\Tt}\cap b^{\Tt}\neq \emptyset$ by arguments similar to the proof of Claim \ref{Claim2} it follows that $\cM_{\infty}^{\Tt} $ is not sound. In both cases we cannot have $\cM=\cM^{\Tt}_{\infty}$ as $\cM\cap \ord < \Xi$ and $\cM$ is sound.
	
	Therefore $\cM$ is a proper initial segment of $\cM_{\infty}^{\Tt}$.

	\begin{claim}
		$\cW= \cM_{\infty}^{\Tt}$.
	\end{claim}
	\begin{proof}	

	Suppose $E_{0}^{\Uu}\neq \emptyset$. Then $\nu_{0}^{\Tt}\leq \cM \cap \ord$ and $\nu_{0}^{\Tt}$ is a successor cardinal in $\cM_{\infty}^{\Tt}$. On the other hand $\cM=\tilde{h}^{n}_{\cM}(\kappa\cup\{p_{n}^{\cM}\}) \triangleleft \cM_{\infty}^{\Tt}$ and as $\cM$ is a proper initial segment of $\cM_{\infty}^{\Tt}$ it follows that $$\cM_{\infty}^{\Tt} \models |\cM \cap \ord | \leq \kappa, $$
	contradicting that $$\cM_{\infty}^{\Tt}\models ``\nu_{0}^{\Tt}>\kappa \text{ and } \nu_{0}^{\Tt} \text{is a cardinal}".$$
	
	Thus, $\Tt$ must be trivial and $\cM_{\infty}^{\Tt}=\cW$. 	
		
	\end{proof}

If $\cW = \cK|\Xi^{+}$ we are done, so suppose 	$\cW= \Ult_{0}(\cK,E^{\cK}_{\beta})|\Xi^{+}$ where $E_{\beta}^{\cK}$ is the least total measure indexed on the sequence $E^{\cK}$ with critical point $cf^{\cW}(\Xi) \geq \kappa$. 

As $\cM $ is a proper initial segment of $\cW$ and
\[\cW \models ``|\cM\cap \ord|\leq \kappa\text{ and } \beta \text{ is a cardinal}",\] it follows that $\cM\cap \ord < \beta$. As $\cW|\beta=\cK|\beta$, it follows that $\cM \triangleleft \cK$.

\end{proof}

\begin{remark}
	 Theorem \ref{SK} implies Lemma \ref{Universal} for $\kappa > \aleph_{2}^{V}$. We will use Lemma \ref{Universal} and Theorem \ref{SK} to prove Lemma \ref{lemma}.  
\end{remark}
\begin{thm}  \cite[Lemma 3.5]{Pcf+Woodin} \label{SK} If there is no inner model with a Woodin cardinal, $\cK$ is the core model and $\kappa \geq \aleph_{2}^{V}$ is a $\cK$-cardinal, then for every sound iterable mouse $\mathcal{M} $ such that $ \mathcal{M}||\kappa = \cK||\kappa $ and $\rho_{\omega}(\mathcal{M}) \leq \kappa $ it holds that 
\begin{gather*} \mathcal{M} \triangleleft \cK.
\end{gather*} 
\end{thm}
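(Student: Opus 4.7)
The plan is to establish $\cM \triangleleft \cK$ via coiteration, following the same template as the proof of Lemma \ref{ISK}, but with more care since we no longer have the hypothesis that $\kappa$ is a strong cutpoint of either $\cK$ or $\cM$. As in that proof, I would fix a singular strong limit $\Xi > \cM \cap \ord$ with $\cf^V(\Xi) > \kappa$ and let $\cW$ be the $(\Xi^+,\kappa)$-stabilization of $\cK$. Then $\cW$ is $(\Xi^+{+}1)$-iterable (via the unique strategy from Theorem \ref{Knm}(iv)), is $\Xi^+$-stable above $\kappa$, has a largest cardinal, and agrees with $\cK$ below $\kappa$. The goal is to coiterate $\cM$ against $\cW$, obtaining trees $\Uu$ on $\cM$ and $\Tt$ on $\cW$, and then show $\cM = \cM_\infty^\Uu$ and $\cM \triangleleft \cM_\infty^\Tt$, so that by the agreement $\cW|\beta = \cK|\beta$ for some $\beta > \cM \cap \ord$, one recovers $\cM \triangleleft \cK$.

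The first main task is to run the analogue of Lemma \ref{Universal} in this weaker setting. Without a strong cutpoint hypothesis, the usual comparison lemma still produces trees with the correct initial-segment relation between the last models, but I can no longer conclude that all extenders used on either side have critical point $> \kappa$. Instead, I would exploit the agreement $\cM\|\kappa = \cK\|\kappa = \cW\|\kappa$ and the least-disagreement rule: the first extender used on either side is indexed at or above $\kappa$. The essential additional input is weak covering, $(\kappa^+)^\cK = \kappa^+$, which follows from $\kappa \geq \aleph_2^V$ via item (iii) of Theorem \ref{Knm} applied to singular strong limits cofinal in $\kappa^+$. Weak covering, together with coherence of the extender sequences and the fact that $\kappa$ is a $\cK$-cardinal, should prevent an extender on the $\Tt$-side with critical point $< \kappa$ and index in $[\kappa, \cM\cap\ord]$ from appearing without a matching move on the $\Uu$-side, forcing both sides to behave as if $\kappa$ were a strong cutpoint for the portion of the comparison below $\cM \cap \ord$.

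Once this is set up, the triviality arguments from Lemma \ref{ISK} transfer verbatim. For $\Uu$: if any extender is applied along the main branch of $\Uu$, then taking $n$ minimal with $\rho_n(\cM)\leq \kappa$, the image $\cM_{\alpha+1}^\Uu$ would satisfy $\rho_n(\cM_{\alpha+1}^\Uu)=\kappa$, yet $\crit(E_\alpha^\Uu) \notin \tilde{h}^n_{\cM_\alpha^\Uu}(\kappa \cup \{\pi_{0,\alpha+1}^\Uu(p_n^{\cM})\})$, contradicting soundness of the models on the main branch. So $\cM = \cM_\infty^\Uu$. For $\Tt$: if $\Tt$ were nontrivial and moved the first index $\nu_0^\Tt \leq \cM\cap\ord$, then $\cM \triangleleft \cM_\infty^\Tt$ would give a successor-cardinal collapse of $\cM\cap\ord$ in $\cM_\infty^\Tt$ below $\nu_0^\Tt$, contradicting that $\nu_0^\Tt$ is a cardinal of $\cM_\infty^\Tt$.

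The main obstacle is exactly the step I flagged in the middle paragraph: ruling out extenders with critical point below $\kappa$ on the $\cW$-side of the comparison when $\kappa$ is not assumed to be a strong cutpoint. This is where the hypothesis $\kappa \geq \aleph_2^V$ is genuinely used, via weak covering, and it is also the reason the proof of the cited lemma in \cite{Pcf+Woodin} is more delicate than that of Lemma \ref{ISK}; I would expect the argument to invoke not only coherence and agreement but also a careful analysis of how $\cK$'s extender sequence interacts with $V$-cardinals at or above $\aleph_2^V$.
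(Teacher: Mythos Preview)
The paper does not prove this theorem: it is quoted verbatim as \cite[Lemma 3.5]{Pcf+Woodin} and used as a black box, with no proof supplied. So there is nothing in the present paper to compare your proposal against.

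That said, a brief comment on your sketch. The overall architecture---coiterate $\cM$ against a stabilization of $\cK$, show the $\cM$-side is trivial by a soundness argument, show the $\cW$-side is trivial by a cardinality argument---is the right template, and you have correctly isolated the one genuinely new difficulty relative to Lemma~\ref{ISK}: without a strong cutpoint at $\kappa$, nothing a priori prevents extenders with critical point below $\kappa$ from being used, and this is exactly where the hypothesis $\kappa \geq \aleph_2^V$ enters. One caution: your appeal to Theorem~\ref{Knm}(iii) for weak covering at $\kappa$ is not quite right as stated---that clause only gives $(\lambda^+)^{\cK}=\lambda^+$ for \emph{singular strong limit} $\lambda$, and ``singular strong limits cofinal in $\kappa^+$'' does not directly yield $(\kappa^+)^{\cK}=\kappa^+$. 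The actual argument in \cite{Pcf+Woodin} uses a more refined covering-type input (essentially that $\cK$ computes successors of $\cK$-cardinals $\geq \aleph_2^V$ correctly, which is stronger than what Theorem~\ref{Knm} records here). Your instinct that this is the crux is correct, but the details live in the cited reference, not in this paper.
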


\begin{definition} We define the following hypothesis:
	\begin{gather*}(\Delta) \longleftrightarrow \Big(  \big(\text{there is no inner model with a Woodin cardinal}\big) \\ \wedge \\ \big( V=L[E] \big)  \ \wedge \ \big( L[E] \text{ is iterable}    \big) \Big)  
	\end{gather*}
\end{definition}

\begin{lemma}[Steel] \label{lemma} Assume $(\Delta)$. Then $V = \cK$.
\end{lemma}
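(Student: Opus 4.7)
Here is my plan. The inclusion $\cK \subseteq V$ is immediate, since $\cK$ is defined in $V$ to be an inner model by Theorem~\ref{Knm}(i). Hence it suffices to prove $V = L[E] \subseteq \cK$, which amounts to showing that for every ordinal $\alpha$ the coherent $J$-structure $L[E]||\alpha$ coincides with $\cK||\alpha$. I would prove this equality by transfinite induction on $\alpha$. The underlying $J$-structure part is immediate from the inductive hypothesis (as a direct union of the $L[E]||\beta$ for $\beta < \alpha$), so the content of each step is the agreement of the top extenders $E_{\omega\alpha}$ and $E^{\cK}_{\omega\alpha}$.

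To establish this agreement in the case where $E_{\omega\alpha}$ is nonempty, I would consider the premouse $\cN := L[E]||\alpha$, which is sound because it is a proper initial segment of the premouse $L[E]$, and iterable by $(\Delta)$, and then apply Lemma~\ref{ISK}. For the application, I would choose a strong cutpoint $\kappa$ of $L[E]$ with $\kappa < \alpha$ and $\rho_\omega(\cN) \leq \kappa$; the inductive hypothesis gives $\cN||\kappa = L[E]||\kappa = \cK||\kappa$, and the same hypothesis (which identifies the extender sequences of $\cK$ and $L[E]$ below $\alpha$) ensures $\kappa$ is also a strong cutpoint of $\cK$. Lemma~\ref{ISK} then yields $\cN \triangleleft \cK$, and reading off the top extender gives $E_{\omega\alpha} = E^{\cK}_{\omega\alpha}$. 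A symmetric argument, using the iterability of $\cK$ from Theorem~\ref{Knm}(iv), treats the case where $E^{\cK}_{\omega\alpha}$ is nonempty while $E_{\omega\alpha}$ might not be, by taking $\cK||\alpha$ in place of $\cN$ and comparing it against $L[E]$.

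The main obstacle is locating a strong cutpoint $\kappa < \alpha$ with $\rho_\omega(\cN) \leq \kappa$ in pathological cases: for example, when $\cN$ has a top extender whose critical point sits very close to $\alpha$, or when there simply is no strong cutpoint of $L[E]$ in the interval $(\rho_\omega(\cN), \alpha)$. In such cases I would replace $\cN$ by a slightly different sound initial segment of $L[E]$ whose projectum falls below an available strong cutpoint, or pass to the $(\Xi^{+},\kappa)$-stabilization $\cW$ of $\cK$ for an appropriate singular strong limit $\Xi > \alpha$ and apply Lemma~\ref{Universal} in place of Lemma~\ref{ISK}; the height and universality properties of $\cW$, combined with $(\Xi^{+})^{\cK} = \Xi^{+}$ from Theorem~\ref{Knm}(iii), will compensate for the absence of small cutpoints. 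The delicate technical point is carrying out the case analysis uniformly, and verifying that the strong-cutpoint property of $\kappa$ in $\cK$ is recoverable from the inductive data, without allowing the induction to depend circularly on the conclusion we are trying to establish.
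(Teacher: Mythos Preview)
Your overall strategy—establish agreement of $L[E]$ and $\cK$ by induction, using that sound iterable mice agreeing with $\cK$ on an initial segment condense into $\cK$—is correct and matches the paper. But there is a genuine gap: the plan to always locate a strong cutpoint $\kappa < \alpha$ with $\rho_\omega(\cN) \leq \kappa$ fails in general. If $L[E]$ has a strong cardinal $\delta$, then no ordinal $\geq \delta$ is a strong cutpoint of $L[E]$, yet you must verify agreement at levels $\alpha$ arbitrarily far above $\delta$. Your fallbacks do not clearly resolve this: replacing $\cN$ by another initial segment loses the information about the level $\alpha$ you are trying to establish, and Lemma~\ref{Universal} still demands a strong cutpoint common to both models. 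The symmetric argument you propose is also problematic as stated, since Lemma~\ref{ISK} yields $\cM \triangleleft \cK$, not $\cM \triangleleft L[E]$; but more to the point it is unnecessary once the induction is organized correctly.

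The paper avoids the cutpoint obstacle in two ways. First, it inducts on \emph{cardinals} of $V$ rather than on all ordinals: to pass from $\cK||\mu = L[E]||\mu$ to $\cK||\mu^{+} = L[E]||\mu^{+}$, one observes that cofinally many $\xi \in (\mu,\mu^{+})$ satisfy $\rho_\omega(L[E]||\xi) \leq \mu$, and once each such $L[E]||\xi$ is shown to be $\triangleleft\, \cK$ one has $L[E]|\mu^{+} = \cK|\mu^{+}$ directly—no symmetric comparison is needed. Second, and this is the key ingredient you are missing, for $\mu \geq \aleph_2$ the paper invokes Theorem~\ref{SK} rather than Lemma~\ref{ISK}: Theorem~\ref{SK} requires only that $\mu$ be a $\cK$-cardinal (which follows from the induction hypothesis), \emph{not} that $\mu$ be a strong cutpoint. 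Lemma~\ref{ISK} with its cutpoint hypothesis is used only for the base case below $\aleph_2$, where $\omega$ and $\omega_1$ are automatically strong cutpoints because the critical point of any extender is a local limit cardinal.
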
 
\begin{proof} By Theorem \ref{Knm}  we can build $\cK$, the core model. We prove by induction on the cardinals $\kappa$ of $V$ that $\cK||\kappa = L[E]||\kappa$.

 \begin{claim} \label{claimI} $\cK||\aleph_{2} = L[E]||\aleph_{2} $
\end{claim}
\begin{proof}  Because of acceptability and soundness there are cofinally many $\alpha < \omega_{1}$ such that $ \rho_{\omega}(L[E]||\alpha)=\omega$. Fix such an $\alpha < \omega_{1}$, and let $\cM=L[E]||\alpha$. We have that $\cM$ is a sound iterable premouse such that $\cM||\omega = \cK||\omega$.  Hence, by Lemma \ref{ISK}  it follows that $\cM\triangleleft \cK$. 
Thus 
\begin{gather*}
\cK|\aleph_{1} = L[E]|\aleph_{1}.
\end{gather*} 

Again by acceptability and soundness there are unboundedly many $\beta < \aleph_{2}$ such that $\rho_{\omega}(L[E]||\beta) = \omega_{1}$. We fix such a $\beta$ and consider $\cN= L[E]||\beta$.
As $\cK|\omega_{1} = L[E]|\omega_{1}$, it follows that $\cN|\omega_{1}=\cK|\omega_{1}$ and as $\omega_{1}$ is a cardinal it follows that $\cN||\omega_{1}=\cK||\omega_{1}$.

\begin{subclaim}\label{Omega1CUTPOINT} $\omega_{1}$ is a strong cutpoint of $L[E]$ and $\cK$.
\end{subclaim}\begin{proof} We start by verifying this for $L[E]$. Suppose $\gamma >\omega_{1}$ and  $E_{\gamma}\neq \emptyset$. Then $\omega_{1}^{L[E]||\gamma}=\omega_{1}$. From $(\crit(E_{\gamma})  \text{ is a limit cardinal} )^{L[E]||\gamma}$ it follows that $\crit(E_{\gamma}) > \omega_{1}$.

	Next we verify it for $\cK$. From Claim \ref{claimI} we have $\cK||\omega_{1}=L[E]||\omega_{1}$. Therefore $\omega_{1}^{\cK}=\omega_{1}$.  Suppose $\gamma > \omega_{1}$ and $\crit(E_{\gamma}^{\cK}) \neq \emptyset$. Then $\omega_{1}^{\cK||\gamma}=\omega_{1}$. From $(\crit(E_{\gamma}^{\cK})  \text{ is a limit cardinal} )^{\cK||\gamma}$ it follows that $\crit(E^{\cK}_{\gamma}) > \omega_{1}$.	
\end{proof}

Thus, by Lemma \ref{ISK} it follows that $\cN \triangleleft \cK$. Therefore we have $\cK|\aleph_{2}=\cM|\aleph_{2}$. Since extenders are not indexed at cardinals we have $E^{\cK}_{\aleph_{2}}=\emptyset=E_{\aleph_{2}}$. Then $\cK||\aleph_{2}=L[E]||\aleph_{2}$. 
\end{proof}

Now suppose $\kappa > \aleph_{2}$ is a successor cardinal in $V$, say $\kappa=\mu^{+} $ and $ \cK|| \mu = L[E]||\mu $. Then by Theorem \ref{SK}
for every $\xi \in (\mu, \kappa)  $ such that $ \rho_{\omega}(L[E]||\xi) \leq \mu $ we have $ L[E]||\xi \triangleleft \cK$. Thus as there are unboundedly many such $\xi$ below $\kappa$ it follows that $ \cK|\kappa = L[E]|\kappa$. As $E_{\kappa}=\emptyset=E^{\cK}_{\kappa}$ it follows that $\cK||\kappa = L[E]||\kappa$. 

Lastly, suppose $\kappa $ is a limit cardinal in $V$ and for every  cardinal $\mu < \kappa $ we have $ L[E]|\mu = \cK|\mu$, then $L[E]|\kappa = \cK|\kappa$. As $\kappa$ is a cardinal, it follows that $E_{\kappa}=\emptyset=E^{\cK}_{\kappa}$. Therefore $\cK||\kappa = L[E]||\kappa$.

This concludes the induction and verifies the lemma.
 \end{proof}

\begin{thm} \cite[Theorem 2.1]{VM} \label{VM} If there is no inner model\footnote{We have omitted the hypothesis that $\mathcal{P}(\mathbb{R}) \subseteq M$ since we start from the hypothesis that there is no inner model with a Woodin cardinal which is stronger than the hypothesis in \cite[Theorem 2.1]{VM}.} with a Woodin cardinal and $ j:V \rightarrow M $ is an elementary embedding and $M^{\omega} \subseteq M$, then there is an iteration tree $\mathcal{T}$ on $ \cK^{V}$ which does not drop along the main branch such that $ \mathcal{M}_{\infty}^{\mathcal{T}} = \cK^{M}$ and $j|\cK = \pi_{0,\infty}^{\mathcal{T}}$.
\end{thm}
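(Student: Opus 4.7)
The plan is to build $\mathcal{T}$ in two conceptually separate steps: first identify the target $\cK^M$ with $j(\cK^V)$, and then realize the external map $j\restriction \cK^V$ as the internal iteration embedding of a coiteration of $\cK^V$ with itself. Throughout I shall use Theorem~\ref{Knm}: both $\cK^V$ and $\cK^M$ exist and are iterable in their respective universes; moreover, the strategy is absolutely definable, and $\cK|\omega_1$ is $\Sigma_1$-definable over $J_{\omega_1}(\mathbb{R})$, which (combined with $M^\omega\subseteq M$) transfers enough iterability from $V$ to $M$ for everything appearing in the argument.

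Step one is to show $j(\cK^V)=\cK^M$. Working inside $M$, the class $j(\cK^V)$ is an iterable extender model (by elementarity of $j$ together with the transfer of iterability just noted), and $\cK^M$ is the core model of $M$ by Theorem~\ref{Knm} applied inside $M$. I would then run the coiteration of these two proper-class premice in $M$: by the universality of $\cK^M$ the $\cK^M$-side cannot drop on its main branch, and by the universality of $j(\cK^V)$ (inherited by elementarity from the universality of $\cK^V$ in $V$) neither can the other side. A standard argument forces both last models to coincide and neither coiteration to strictly extend the other, so both sides must be trivial and $j(\cK^V)=\cK^M$.

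Step two is to realize $\sigma := j\restriction \cK^V : \cK^V \to \cK^M$ as an iteration map. For this, I would coiterate $\cK^V$ with $\cK^M$ in $V$ to obtain normal iteration trees $\mathcal{T}$ on $\cK^V$ and $\mathcal{U}$ on $\cK^M$ with common last model, both main branches not dropping (once again by universality on both sides). Since, by Step one, $\cK^M$ is already an iterate of $\cK^V$ via $\sigma$, the Dodd--Jensen lemma applied on the $\cK^M$-side forces $\mathcal{U}$ to be trivial: otherwise the composition of $\sigma$ with $\pi^{\mathcal{U}}_{0,\infty}$ would be an elementary embedding of $\cK^V$ into $\cM_\infty^{\mathcal{T}}$ violating the pointwise minimality of $\pi^{\mathcal{T}}_{0,\infty}$. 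Thus $\cM_\infty^{\mathcal{T}} = \cK^M$ and $\pi^{\mathcal{T}}_{0,\infty} : \cK^V \to \cK^M$ is an iteration embedding on a non-dropping branch; a second application of Dodd--Jensen, comparing $\pi^{\mathcal{T}}_{0,\infty}$ with $\sigma$, forces $\sigma=\pi^{\mathcal{T}}_{0,\infty}$, which is the conclusion.

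The main obstacle, and where I would expect the real work to lie, is the iterability transfer for proper-class premice from $V$ to $M$: since iteration strategies are themselves proper classes, $M^\omega\subseteq M$ does not immediately give $M$ access to the full strategy of $\cK^V$ or of the objects produced by the coiterations above. The standard way around this is to reduce iterability to a statement about countable hulls, exploiting the $\Sigma_1(J_{\omega_1}(\mathbb{R}))$-definability of $\cK|\omega_1$ from Theorem~\ref{Knm}(v) together with the absoluteness of the canonical $\Sigma_2$-definable strategy from Theorem~\ref{Knm}(iv), so that any cofinal wellfounded branch needed in the comparisons above is available in $M$ via its countable approximations.
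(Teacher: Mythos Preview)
The paper does not prove this theorem; it is quoted from \cite{VM} and used as a black box. So there is no in-paper proof to compare against, and I will assess your sketch on its own terms and against the argument in \cite{VM}, which does follow broadly the two-step strategy you describe.

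Your Step~1 is correct but overcomplicated: $j(\cK^V)=\cK^M$ follows immediately from Theorem~\ref{Knm}(i), since $\cK$ is uniformly $\Sigma_2$-definable and $j$ is elementary. No coiteration is needed here.

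The real issue is Step~2. You write that if $\mathcal{U}$ were nontrivial, then $\pi^{\mathcal{U}}_{0,\infty}\circ\sigma:\cK^V\to\cM^{\mathcal{T}}_\infty$ would ``violate the pointwise minimality of $\pi^{\mathcal{T}}_{0,\infty}$''. But Dodd--Jensen only yields the inequality $\pi^{\mathcal{T}}_{0,\infty}(\alpha)\le(\pi^{\mathcal{U}}_{0,\infty}\circ\sigma)(\alpha)$; this is not a contradiction and does not by itself force $\mathcal{U}$ to be trivial. Likewise, your ``second application of Dodd--Jensen'' gives only $\pi^{\mathcal{T}}_{0,\infty}\le\sigma$ pointwise on ordinals, not equality; you still need the reverse inequality, and there is no obvious map $\cK^M\to\cK^V$ to feed into Dodd--Jensen for that. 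In \cite{VM} these two points are handled not by bare Dodd--Jensen but via the hull and definability properties of $\cK$ (as a universal weasel) on a thick class, which are what actually pin down both that $\cK^M$ does not move in the coiteration and that the resulting iteration map agrees with $j\restriction\cK^V$. Your sketch is missing this ingredient.

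You are right that the genuine technical work is the iterability transfer to $M$, and your description of how $M^\omega\subseteq M$ and the definability clauses of Theorem~\ref{Knm} are used for this is accurate in spirit.
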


\begin{definition}\label{it_induced} Suppose there is no inner model with a Woodin cardinal. Given $j: V \longrightarrow M$ an elementary embedding, let $\mathcal{T}$ and $\mathcal{U}$ be the iteration trees obtained by comparing\footnote{Given two iterable premice $\cM$ and $\cN$ the comparison  between (or coiteration of) $\cM$ and $\cN$ is the process of iterating $\cM$ and $\cN$ so that at each sucessor step $\alpha +1$ the extenders $E_{\alpha+1}^{\Tt}$ and $E_{\alpha+1}^{\Uu}$ are the least extender in the sequences of $\cM_{\alpha}^{\Tt}$ and $\cM_{\alpha}^{\Uu}$ where there is a disagreement. The last models of these iterations should line up, i.e., $\cM_{\infty}^{\Tt} \triangleright \cM_{\infty}^{\Uu}$ or vice versa. See \cite[Section 3.2]{MR2768698} or \cite[Lemma 9.1.8]{MR1876087} .} $\cK^{V}$ and $\cK^{M} $ respectively. Then we say that $\mathcal{T}$ is the \emph{iteration tree induced by} $j$. 
\end{definition}

The next lemma makes it clear how we would like to combine Theorem \ref{VM} and Lemma \ref{lemma}.
\begin{lemma} \label{InducedIteration}
	Assume $(\Delta)$. Suppose that $j:V \rightarrow M$ and $M^{\omega} \subseteq M$. Then there is $\Tt$ an iteration tree on $L[E]$ induced by $j$ such that: \begin{enumerate}
		\item[(a)] There is no drop along the main branch $b^{\Tt}$ of $\Tt$.
		\item[(b)] $\pi^{\Tt}_{0,\infty}:\cM_{0}^{\Tt} \longrightarrow \cM_{\infty}^{\Tt}$, i.e., $dom(\pi_{0,\infty}^{\Tt})=\cM_{0}^{\Tt}$.
	    \item[(c)] $\pi^{\Tt}_{0,\infty}=j$.
	    \item[(d)] $\cM_{\infty}^{\Tt}=M$.
	\end{enumerate} 
\end{lemma}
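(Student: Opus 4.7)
The plan is to combine Lemma~\ref{lemma} with Theorem~\ref{VM}, checking that both the source and the target of $j$ equal their respective core models so that the tree from Theorem~\ref{VM} has the required properties.

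First, by Lemma~\ref{lemma}, hypothesis $(\Delta)$ gives $V=\cK^{V}=L[E]$. Next I would verify that $M$ also satisfies $(\Delta)$, so that Lemma~\ref{lemma} applies inside $M$ and yields $M=\cK^{M}$. For this: $M$ is a transitive inner model containing all ordinals, so any inner model of $M$ is an inner model of $V$, and hence $M$ inherits from $V$ the non-existence of an inner model with a Woodin cardinal. By elementarity of $j$, $M \models V=L[j(E)]$. Iterability of $L[j(E)]$ inside $M$ follows by elementarity of $j$ together with the $\omega$-closure $M^{\omega}\subseteq M$ (combined with the canonicity and $\Sigma_{2}$-definability of the strategy from Theorem~\ref{Knm}(iv), which lets $M$ reproduce the unique choice of cofinal well-founded branches). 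Thus $M\models(\Delta)$, and applying Lemma~\ref{lemma} inside $M$ gives $M=\cK^{M}$.

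Now apply Theorem~\ref{VM} to $j\colon V\to M$. This furnishes an iteration tree $\Tt$ on $\cK^{V}$ that does not drop along its main branch, with $\cM_{\infty}^{\Tt}=\cK^{M}$ and $\pi_{0,\infty}^{\Tt}=j\restriction\cK$. Substituting $\cK^{V}=L[E]$ and $\cK^{M}=M$, the tree $\Tt$ lives on $L[E]$, its last model is exactly $M$ (giving (d)), and because $\cK=V$ we have $j\restriction\cK=j$, so $\pi_{0,\infty}^{\Tt}=j$ (giving (c)). The no-drop clause is (a), and it immediately implies that $\pi_{0,\infty}^{\Tt}$ is total on $\cM_{0}^{\Tt}=L[E]$, which is (b).

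The only non-bookkeeping step is the verification that $M\models(\Delta)$, which in turn reduces to transferring iterability of $L[E]$ from $V$ to $L[j(E)]$ in $M$; this is where one must appeal to the definability and uniqueness of the iteration strategy from Theorem~\ref{Knm} together with $\omega$-closure of $M$. Everything else is a direct quotation of Lemma~\ref{lemma} and Theorem~\ref{VM}.
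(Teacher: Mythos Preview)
Your proof is correct and follows the same overall plan as the paper: invoke Lemma~\ref{lemma} to get $V=\cK$, apply Theorem~\ref{VM}, and check $M=\cK^{M}$ so that the tree produced by Theorem~\ref{VM} has $M$ as its last model and $j$ as its branch embedding.

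There is one minor difference worth noting. To obtain $M=\cK^{M}$ the paper does not verify $(\Delta)^{M}$ and re-run Lemma~\ref{lemma} inside $M$; instead it argues directly by elementarity: since $V=\cK$, we have $V\models\forall x\,\psi_{\cK}(x)$ for the $\Sigma_{2}$ formula from Theorem~\ref{Knm}, and elementarity of $j$ immediately gives $M\models\forall x\,\psi_{\cK}(x)$, i.e.\ $M=\cK^{M}$. This is shorter and avoids the discussion of iterability of $L[j(E)]$ inside $M$ altogether. In particular, the part of your argument where you invoke $\omega$-closure and the definability of the strategy to transfer iterability is unnecessary: since ``$L[E]$ is iterable'' is a first-order assertion in the language $\{\in,\dot{E}\}$ satisfied by $V$, it passes to $M$ by elementarity of $j$ alone. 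Your route is not wrong, just slightly longer than needed.
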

\begin{proof} Apply Theorem \ref{VM} to $j$ and $M$ and let $\Tt$ be the iteration tree on $\cK$ induced by $j$. Apply lemma \ref{lemma} to obtain $\cK= V$ and hence $\pi^{\Tt}_{0,\infty} = j$. Thus $\Tt$ and $\pi^{\Tt}_{0,\infty}$ are as sought. 
	
	Let us verify (d).  Let $\psi_{\cK}(x)$ be as in Theorem \ref{Knm} such that $x \in \cK$ if and only if $\psi_{\cK}(x)$. By Lemma \ref{lemma} we have $(\forall x ~ \psi_{\cK}(x))^{L[E]}$, then by the elementarity of $j$ we have  $(\forall  x ~ \psi_{\cK}(x))^{M}$. Therefore $M = \cK^{M} = \mathcal{M}_{\infty}^{\mathcal{T}} $, where we get the last equality by Theorem \ref{VM}. 
\end{proof}

\begin{lemma} \label{LemmaFiniteTree}
	Let $\cM$ be a premouse and $\Tt$ be an iteration tree on $\cM$. If $lh(\Tt)\geq \omega + 1 $, then $(\cM_{\infty}^{\Tt})^{\omega} \not\subseteq \cM_{\infty}^{\Tt}$. 
\end{lemma}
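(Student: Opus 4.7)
The plan is to argue by contradiction, assuming $(\cM_{\infty}^{\Tt})^{\omega} \subseteq \cM_{\infty}^{\Tt}$. By the normality of $\Tt$, the sequence of extender indices $\langle \nu_{n}^{\Tt} \mid n < \omega \rangle$ from the first $\omega$ stages of $\Tt$ is strictly increasing. By Fact~\ref{fact1} (coherency), for every $n < \omega$ one has $\cM_{\infty}^{\Tt} | \nu_{n}^{\Tt} = \cM_{n}^{\Tt} | \nu_{n}^{\Tt}$, and each $\nu_{n}^{\Tt}$ is a successor cardinal of $\cM_{\infty}^{\Tt}$. In particular every $\nu_{n}^{\Tt}$ belongs to $\cM_{\infty}^{\Tt}$, so under the $\omega$-closure assumption the full sequence $\langle \nu_{n}^{\Tt} \mid n < \omega \rangle$ also belongs to $\cM_{\infty}^{\Tt}$.

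Set $\nu^{*} := \sup_{n < \omega} \nu_{n}^{\Tt}$. Since $\langle \nu_{n}^{\Tt} \mid n < \omega \rangle \in \cM_{\infty}^{\Tt}$, the supremum $\nu^{*}$ must lie strictly below $\h(\cM_{\infty}^{\Tt})$ (otherwise the sequence would be cofinal in the ordinals of $\cM_{\infty}^{\Tt}$ and hence could not be a member of it), and moreover $\cf^{\cM_{\infty}^{\Tt}}(\nu^{*}) = \omega$ as witnessed by this very sequence. Combined with coherency, one also obtains the extender sequence $\langle E_{\nu_{n}^{\Tt}}^{\cM_{\infty}^{\Tt}} \mid n < \omega \rangle$ as an element of $\cM_{\infty}^{\Tt}$, together with every $\cM_{n}^{\Tt} | \nu_{n}^{\Tt}$.

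To close the argument, I would exhibit an ordinal $\delta \leq \nu^{*}$ belonging to $\cM_{\infty}^{\Tt}$ that is regular — ideally inaccessible — in $\cM_{\infty}^{\Tt}$, yet is of cofinality $\omega$ there via the sequence just produced, yielding a direct contradiction. A natural candidate is the image $\pi^{\Tt}_{0,\infty}(\crit(E_{0}^{\Tt}))$ (or an analogous image of an early critical point along the main branch of $\Tt$), which is inaccessible in $\cM_{\infty}^{\Tt}$ because it is the image under an elementary embedding of a measurable cardinal of $\cM_{0}^{\Tt}$. The main obstacle will be confirming, via careful bookkeeping along the main branch using the rules of normal $\lambda$-indexed iteration trees, that such a $\delta$ sits at or below $\nu^{*}$ and is genuinely approached cofinally from within $\cM_{\infty}^{\Tt}$ by the coherent data extracted above — so that the same $\omega$-sequence simultaneously forces $\cf^{\cM_{\infty}^{\Tt}}(\delta) = \omega$ while the inaccessibility of $\delta$ in $\cM_{\infty}^{\Tt}$ forbids it.
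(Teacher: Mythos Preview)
Your setup is sound: the sequence $\langle \nu_n^{\Tt} \mid n < \omega \rangle$ does lie in $\cM_\infty^{\Tt}$ under the closure hypothesis, and $\nu^* = \sup_n \nu_n^{\Tt}$ has cofinality $\omega$ there. The gap is in the final step. There is no reason your candidate $\delta = \pi_{0,\infty}^{\Tt}(\crit(E_0^{\Tt}))$, or any analogous image of a critical point, should coincide with $\nu^*$ or be approached cofinally by the $\nu_n^{\Tt}$'s; depending on the tree, such a $\delta$ can sit far above $\nu^*$ or well below it. Moreover, $\crit(E_0^{\Tt})$ need not be measurable in $\cM_0^{\Tt}$ (the extender may be partial on $\cM_0^{\Tt}$), and $\pi_{0,\infty}^{\Tt}$ need not even be total if the main branch drops. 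The ``main obstacle'' you flag is not a bookkeeping detail but the entire content of the argument, and I do not see how to close it along these lines.

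The paper takes a more direct route that avoids any regularity/cofinality clash. It works instead with the sequence $\vec{\kappa} = \langle \kappa_n \mid n \in b \cap \omega \rangle$ of critical points along the branch $b = [0,\omega]_T$, where $\kappa_n = \crit(\pi_{n,\omega}^{\Tt})$ for $n$ past the last drop. This sequence cannot belong to $\cM_\omega^{\Tt}$ by a one-line direct-limit argument: if $\vec{\kappa} = \pi_{m,\omega}^{\Tt}(\bar{x})$ for some $m \in b$, then $\kappa_m = \vec{\kappa}(m) = \pi_{m,\omega}^{\Tt}(\bar{x})(m) \in \ran(\pi_{m,\omega}^{\Tt})$, contradicting $\kappa_m = \crit(\pi_{m,\omega}^{\Tt})$. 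One then uses coherency (Fact~\ref{fact1}) to show that if $\vec{\kappa} \in \cM_\infty^{\Tt}$ then already $\vec{\kappa} \in \cM_\omega^{\Tt}$: the entries of $\vec{\kappa}$ are bounded by $\sup\{\nu_m^{\Tt} \mid m+1 \in b\} < \nu_\omega^{\Tt}$, and $\cM_\infty^{\Tt}|\nu_\omega^{\Tt} = \cM_\omega^{\Tt}|\nu_\omega^{\Tt}$. The contradiction thus lives at stage $\omega$ and concerns ranges of branch embeddings, not regularity in the final model.
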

\begin{proof}
Suppose $lh(\mathcal{T}) \geq \omega + 1 $ and  $b=[0,\omega]_{T}$ is the cofinal branch on $\omega$.  Let $\langle\kappa_{n} \mid n \in \omega \cap b \rangle $ be such that \begin{gather*} \forall n \in (b \setminus n_{0}) \ \Big(\kappa_{n} = crit( \pi_{n,\omega}^{\mathcal{T}}) \Big),  
	\end{gather*} where $n_{0}$ is large enough such that $D^{\Tt} \cap (n_{0},\omega)_{T}=\emptyset $, and for $n \in b \cap n_{0}$ we set $\kappa_{n} = \emptyset$. Denote $\vec{\kappa}:=\langle\kappa_{n} \mid n \in \omega \cap b \rangle $. 
	
	\begin{claim} \label{SeqCrit} $\langle\kappa_{n} \mid n \in \omega \cap b \rangle \not\in \mathcal{M}_{\omega}^{\mathcal{T}}$. \end{claim}
	\begin{proof}  For a contradiction suppose $\vec{\kappa} \in \mathcal{M}_{\omega}^{\mathcal{T}}$, and let  $m \in \omega \cap b$ and $\overline{x} \in \mathcal{M}^{\mathcal{T}}_{m}$ such that 
	\begin{gather*} \pi^{\mathcal{T}}_{m,\omega}(\overline{x}) = \vec{\kappa}.
	\end{gather*}  
	
	Then \begin{gather*} crit(\pi^{\mathcal{T}}_{m,\omega}) = \pi^{\mathcal{T}}_{m,\omega}(\overline{x})(m) \in ran(\pi_{m,\omega}^{\mathcal{T}}).
	\end{gather*}  
	This is a contradiction since $crit(\pi^{\mathcal{T}}_{m,\omega}) \not\in ran(\pi_{m,\omega}^{\Tt})$. Therefore $\vec{\kappa} \not\in \mathcal{M}_{\omega}^{\mathcal{T}}$.
	\end{proof}

The lemma will follow from our next claim:
\begin{claim}\label{MissingSequence}
	If $\vec{\kappa} \in \cM_{\infty}^{\Tt}$, then $\vec{\kappa} \in \cM_{\omega}^{\Tt}$.
\end{claim}
\begin{proof} If $lh(\Tt)=\omega+1$, then $M_{\infty}^{\Tt}=\cM_{\omega}^{\Tt}$ and there is nothing to do in this case. 
	
	Let us assume $\lh(\Tt)>\omega+1$. 	The normality of $\mathcal{T}$ implies the following:
	\begin{gather}\label{Sup}sup_{n\in \omega} \kappa_{n} \leq \sup \{ \nu_{m}^{\mathcal{T}} \mid m+1 \in (\omega \cap b)\} \leq \nu^{\mathcal{T}}_{\omega}.\end{gather}
	By Fact \ref{fact1}, it follows that 
	$ \nu_{\omega}^{\mathcal{T}}$ is a successor cardinal in $\mathcal{M}_{\omega+1}^{\mathcal{T}}$. Since $ \sup \{ \nu_{m}^{\mathcal{T}} \mid m+1 \in (\omega \cap b)\}$ is clearly a limit cardinal in $\mathcal{M}^{\mathcal{T}}_{\omega+1}$, we have that the second inequality in \eqref{Sup} is in fact a strict inequality. 
	
	Write $\nu^{*}:=  \sup \{ \nu_{m}^{\mathcal{T}} \mid m+1 \in (\omega \cap b)\}$, then 
	\begin{gather*} \label{Ineq} \{ \kappa_{n} \mid n \in \omega\} \subseteq sup_{n\in\omega}\kappa_{n} < (\nu^{*})^{+\mathcal{M}^{\mathcal{T}}_{\omega+1}} \leq \nu_{\omega}^{\mathcal{T}}. \end{gather*}
	
	In particular, \begin{gather*} \mathcal{M}_{\omega + 1}^{\mathcal{T}}|\nu_{\omega+1}^{\mathcal{T}} \models  ``sup_{n\in\omega}\kappa_{n} < (\nu^{*})^{+} \leq \nu_{\omega}^{\mathcal{T}}". 
	\end{gather*} 
	Since 
	\begin{gather*}\label{findseq2} \mathcal{M}_{\omega+1}^{\mathcal{T}}|\nu_{\omega+1}^{\mathcal{T}} = \mathcal{M}^{\Tt}_{\infty}| \nu_{\omega+1}^{\mathcal{T}},
	\end{gather*} 
	we have 
	\begin{gather}  \label{findseq} \mathcal{M}_{\infty}^{\mathcal{T}}|\nu_{\omega+1}^{\mathcal{T}} \models  ``sup_{n\in\omega}\kappa_{n} < (\nu^{*})^{+} \leq \nu_{\omega}^{\mathcal{T}}". 
	\end{gather} 
	 By \eqref{findseq} and $\vec{\kappa}\in \mathcal{M}^{\mathcal{T}}_{\infty}$, we have $\vec{\kappa}\in \mathcal{M}^{\mathcal{T}}_{\infty}|\nu_{\omega}^{\mathcal{T}}$.
	  
	We also have 
	\begin{gather*} \mathcal{M}_{\omega+1}^{\mathcal{T}}|\nu_{\omega}^{\mathcal{T}} = \mathcal{M}^{\Tt}_{\infty}| \nu_{\omega}^{\mathcal{T}} = \mathcal{M}_{\omega}^{\mathcal{T}}|\nu_{\omega}^{\mathcal{T}},
	\end{gather*} 
	and hence \begin{gather*}
\vec{\kappa} \in \mathcal{M}_{\omega}^{\mathcal{T}}.  
	\end{gather*}  
\end{proof}
Hence if $(\cM_{\infty}^{\Tt})^{\omega} \subseteq M_{\infty}^{\Tt}$ and $lh(\Tt)\geq \omega+1$, by Claim \ref{MissingSequence} it follows that $\vec{\kappa}\in \cM_{\omega}^{\Tt}$ which by Claim \ref{SeqCrit} is a contradiction.
	\end{proof}

\begin{lemma}\label{finite} Assume $(\Delta)$. If $\kappa < \alpha$ are ordinals,  $ j:L[E] \longrightarrow M$  witnesses that $\kappa$ is $\alpha$-tall, and $\mathcal{T}$ is the iteration tree induced by $j$, then $ \mathcal{T}$ is finite. 
\end{lemma}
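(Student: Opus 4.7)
The plan is to combine Lemma~\ref{InducedIteration} with Lemma~\ref{LemmaFiniteTree} and derive a contradiction from the $\kappa$-closure of $M$. The argument is essentially a one-shot application of these results, with the observation that the $\omega$-closure required to invoke Lemma~\ref{LemmaFiniteTree} is a weakening of the $\kappa$-closure given by $\alpha$-tallness.

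In more detail, first I would note that since $j$ witnesses $\kappa$ is $\alpha$-tall, we have $\crit(j)=\kappa$, so $\kappa$ is measurable and in particular $\kappa \geq \omega$. The closure condition $\leftidx{^\kappa}{M}{}\subseteq M$ therefore gives in particular $\leftidx{^\omega}{M}{}\subseteq M$. Next, since we are assuming $(\Delta)$ and $\leftidx{^\omega}{M}{}\subseteq M$, Lemma~\ref{InducedIteration} applies and yields that the induced iteration tree $\Tt$ has a last model with $\cM_\infty^{\Tt}=M$; in particular $\lh(\Tt)$ is a successor ordinal and the main branch of $\Tt$ does not drop.

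Now I would argue by contradiction: assume $\Tt$ is infinite, so $\lh(\Tt)\geq \omega+1$. Then Lemma~\ref{LemmaFiniteTree} applies and gives $(\cM_\infty^{\Tt})^{\omega}\not\subseteq \cM_\infty^{\Tt}$. But $\cM_\infty^{\Tt}=M$ and we already noted $\leftidx{^\omega}{M}{}\subseteq M$, which is the desired contradiction. Hence $\Tt$ must be finite.

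I do not expect any real obstacle here; the work has already been done in Lemma~\ref{InducedIteration} (identifying $M$ with the direct limit of $\Tt$) and Lemma~\ref{LemmaFiniteTree} (the fact that the sequence of critical points along an $\omega$-branch is missing from $\cM_\infty^{\Tt}$). The only thing to verify carefully is that $\kappa\geq \omega$, so that $\kappa$-closure implies $\omega$-closure, which is immediate from $\kappa$ being a cardinal with a nontrivial elementary embedding critical at $\kappa$.
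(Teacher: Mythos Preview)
Your proposal is correct and matches the paper's proof essentially line for line: apply Lemma~\ref{InducedIteration} to get $\cM_\infty^{\Tt}=M$, note that $\alpha$-tallness gives $M^\omega\subseteq M$, and conclude via Lemma~\ref{LemmaFiniteTree}. The paper is slightly terser in deriving $M^\omega\subseteq M$, but your extra remark that $\kappa\geq\omega$ forces $\kappa$-closure to imply $\omega$-closure is the same observation made explicit.
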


\begin{proof} Let $\Tt$ be the iteration tree induced by $j$. Lemma \ref{InducedIteration} implies that 
	$\pi^{\Tt}_{0,\infty}=j$ and $\cM_{\infty}^{\Tt}= M$.  We have that $M^{\omega} \subseteq M$ as $j:V \rightarrow M$ witnesses that $\kappa$ is $\alpha$-tall. Hence, by Lemma \ref{LemmaFiniteTree}, $\Tt$ is finite.
  \end{proof}

\begin{lemma} \label{lowcp3}
 Assume $(\Delta)$. Let $\kappa$ be a cardinal. Suppose $j:V \rightarrow M$ is an elementary embedding such that $\crit(j)= \kappa$ and $M^{\kappa}\subseteq M$. If $\Tt$ is the iteration tree induced by $j$, then for  $\alpha \in \lh(\Tt)$ we have that $\crit(E_{\alpha}^{\Tt}) \geq \kappa$. 
\end{lemma}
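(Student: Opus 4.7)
By Lemma~\ref{InducedIteration}, $\Tt$ is a normal iteration tree on $V=L[E]$ with $\cM_\infty^\Tt=M$ and $\pi_{0,\infty}^\Tt=j$, and the main branch $[0,\infty]_T$ carries no drops. Since $M^\kappa\subseteq M$ and $\kappa$ is infinite we also have $M^\omega\subseteq M$, so Lemma~\ref{finite} yields that $\Tt$ has finite length. The plan is to derive a contradiction from the minimal failure of the desired conclusion.

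Assume for contradiction that some extender $E_\alpha^\Tt$ used in $\Tt$ has critical point strictly below $\kappa$, and let $\alpha$ be the least such index; write $\mu=\crit(E_\alpha^\Tt)<\kappa$. By minimality, $\crit(E_\beta^\Tt)\geq\kappa$ for every $\beta<\alpha$, so a routine induction along the finite tree shows that $\cM_\beta^\Tt|\kappa=V|\kappa$ for every $\beta\leq\alpha$ and that every embedding $\pi_{\gamma,\delta}^\Tt$ along the branch segment $[0,\alpha]_T$ (where defined) is the identity below $\kappa$. In particular, applying $E_\alpha^\Tt$ at $\xi_\alpha=\pred_T(\alpha+1)$ gives $\pi_{\xi_\alpha,\alpha+1}^\Tt(\mu)=\lambda_\alpha^\Tt>\mu$.

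Consider first the principal case $\alpha+1\leq_T\infty$. Since the main branch is drop-free, $\pi_{0,\alpha+1}^\Tt$ is total and sends $\mu$ to $\lambda_\alpha^\Tt$, and $\pi_{0,\infty}^\Tt=\pi_{\alpha+1,\infty}^\Tt\circ\pi_{0,\alpha+1}^\Tt$. By normality of $\Tt$, every extender used on the tail $(\alpha+1,\infty]_T$ has critical point $\geq\lambda_\alpha^\Tt$, so $\pi_{\alpha+1,\infty}^\Tt$ fixes $\lambda_\alpha^\Tt$. This forces $j(\mu)\geq\lambda_\alpha^\Tt>\mu$, contradicting $\crit(j)=\kappa>\mu$, which requires $j(\mu)=\mu$.

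The main obstacle I expect is the off-branch case $\alpha+1\not\leq_T\infty$, where $\mu$ is never moved along the main branch and the straightforward trace fails. Here I would analyse the least $\beta$ in $[0,\infty]_T$ with $\beta>\alpha$: its tree predecessor lies in $[0,\alpha]_T$, normality forces $\crit(E_{\beta-1}^\Tt)<\lambda_{\pred_T(\beta)}^\Tt$, and the fact that $\crit(j)=\kappa$ rules out $\crit(E_{\beta-1}^\Tt)<\kappa$. Combining these constraints with the agreement $\cM_\delta^\Tt|\nu_\alpha^\Tt=\cM_\alpha^\Tt|\nu_\alpha^\Tt$ supplied by Fact~\ref{fact1}, together with the coherence of the extender sequence, one factors the use of $E_\alpha^\Tt$ through the main-branch embeddings and again extracts $j(\mu)>\mu$, completing the contradiction.
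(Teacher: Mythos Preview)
Your ``principal case'' is essentially vacuous, and the real content---the off-branch case---is left as a hand-wave that cannot be completed along the lines you suggest.

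First observe that if $\gamma+1$ lies on the main branch $b$, then normality together with $\crit(j)=\kappa$ already forces $\crit(E_\gamma^{\Tt})\geq\kappa$: the first extender on $b$ has critical point exactly $\kappa$, and every later one has critical point $\geq\lambda$ of its predecessor. So your minimal counterexample $\alpha$ automatically satisfies $\alpha+1\notin b$; the on-branch case never arises.

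This means the off-branch case is the entire lemma, and there your approach breaks down. You propose to ``factor the use of $E_\alpha^{\Tt}$ through the main-branch embeddings and again extract $j(\mu)>\mu$.'' But this is impossible: every extender used along $b$ has critical point $\geq\kappa$, so $j=\pi_{0,\infty}^{\Tt}$ genuinely fixes every $\mu<\kappa$. There is no way to squeeze $j(\mu)>\mu$ out of an off-branch extender, no matter how you invoke coherence or Fact~\ref{fact1}.

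The paper's argument is entirely different and does not track $j(\mu)$ at all. It exploits the $\lambda$-indexing: the ultrapower map $\pi_{E_\alpha^{\Tt}}$ sends $\tau_\alpha^{\Tt}=\crit(E_\alpha^{\Tt})^{+\cM_\alpha^{\Tt}\|\nu_\alpha^{\Tt}}$ cofinally into $\nu_\alpha^{\Tt}$. Since $\crit(E_\alpha^{\Tt})<\kappa$ and $\kappa$ is a cardinal, $\tau_\alpha^{\Tt}\leq\kappa$, so $\cf^V(\nu_\alpha^{\Tt})\leq\kappa$. The closure $M^\kappa\subseteq M$ then puts a short cofinal sequence into $M$, whence $M\models\cf(\nu_\alpha^{\Tt})<\nu_\alpha^{\Tt}$. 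But $M=\cM_\infty^{\Tt}$, and Fact~\ref{fact1} says $\nu_\alpha^{\Tt}$ is a successor cardinal there---a contradiction. This argument is uniform in $\alpha$ and makes no reference to whether $\alpha+1$ lies on the main branch.
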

\begin{proof} Suppose for a contradiction that there is an $\alpha \in \lh(\Tt)$ such that $\crit(E_{\alpha}^{\Tt})< \kappa$.     Let $\pi_{E_{\alpha}^{\Tt}}:\cM_{\alpha}^{\Tt}||\nu_{\alpha}^{\Tt}\rightarrow \Ult_{0}(\cM_{\alpha}^{\Tt}||\nu_{\alpha}^{\Tt},E_{\alpha}^{\Tt})$ be the ultrapower map and $\tau_{\alpha}^{\Tt}=\crit(E_{\alpha}^{\Tt})^{+\cM_{\alpha}^{\Tt}||\nu_{\alpha}^{\Tt}}$. Since $\cM_{\alpha}^{\Tt}$ is a premouse, it follows that $\ran(\pi_{E_{\alpha}^{\Tt}}\restriction \tau_{\alpha})$ is cofinal in $\nu_{\alpha}^{\Tt}$.
   
    Notice that $\tau_{\alpha}^{\Tt}\leq \kappa$. Hence by $M^{\kappa}\subseteq M$ it follows that $$M \models ``\cf(\nu_{\alpha}^{\Tt}) \leq \crit(E_{\alpha})^{+\cM_{\alpha}^{\Tt}||\nu_{\alpha}^{\Tt}} < \nu_{\alpha}^{\Tt}".$$
    
   On the other hand by Lemma \ref{InducedIteration} we have $\cM_{\infty}^{\Tt} = M$ and by Fact \ref{fact1},
    $$\cM_{\infty}^{\Tt}    \models ``\nu_{\alpha}^{\Tt} \text{ is a successor cardinal}".$$ 
    
    This is a contradiction. Therefore for all $\alpha \in \lh(\Tt)$ we have $\crit(E_{\alpha}^{\Tt})\geq \kappa$.
\end{proof}

We remind the reader that our definitions of $o(\kappa)$ and $O(\kappa)$ are different from the usual ones. For the usual definitions the following lemma would be false. 
\begin{lemma} \label{TallMeas} Assume $(\Delta)$. If $\kappa$ is a measurable cardinal\footnote{The fact that the first part of this lemma holds under much weaker hypothesis is due  Schlutzenberg, see \cite{FarmerThesis}. Here we are working with the hypothesis that $(\Delta)$ holds which makes it easy to verify the lemma.} , then $o(\kappa)>\kappa^{+}$. If $\mu$ is a cardinal, $cf(\mu)>\kappa$ and $\kappa \ \text{is} \ \mu\text{-strong}$, then $  o(\kappa) > \mu $.
\end{lemma}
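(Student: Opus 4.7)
Both parts of the lemma reduce, via the induced-iteration-tree apparatus from Lemmas~\ref{InducedIteration}, \ref{finite}, and \ref{lowcp3}, to locating an extender on the $L[E]$-sequence at an index above the relevant bound whose critical point is exactly $\kappa$. I would fix a witnessing embedding $j:V\to M$ with $\crit(j)=\kappa$: in Part~1 take $j$ to be the ultrapower by a normal measure on $\kappa$, and in Part~2 take $j$ to be the ultrapower by a $(\kappa,\eta)$-extender $F$ with $\eta\geq\mu$ witnessing $\mu$-strongness. In both cases $M^{\kappa}\subseteq M$ (automatic in Part~2 from $\kappa$-completeness of extender ultrapowers), so Lemma~\ref{InducedIteration} produces an iteration tree $\Tt$ on $L[E]$ with $\pi^{\Tt}_{0,\infty}=j$ and $\cM^{\Tt}_{\infty}=M$; by Lemma~\ref{finite} the tree is finite, and by Lemma~\ref{lowcp3} every $\crit(E^{\Tt}_{\alpha})\geq\kappa$. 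Since $\crit(j)=\kappa$, some extender $E^{\Tt}_{\alpha_{0}}$ used on the main branch has critical point exactly $\kappa$, and by coherence of normal iteration trees $E^{\Tt}_{\alpha_{0}}=E^{L[E]}_{\nu^{\Tt}_{\alpha_{0}}}$ lies on the $L[E]$-sequence.

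For Part~1, the inclusion $\mathcal{P}(\kappa)^{V}\subseteq M$ (derived from $M^{\kappa}\subseteq M$) forces $E^{\Tt}_{\alpha_{0}}$ to be a \emph{total} extender on $L[E]$, so its coherent source must be $L[E]|\kappa^{+}$. Remark~\ref{Indexing} then identifies $\nu^{\Tt}_{\alpha_{0}}$ with the height of $\Ult_{0}(L[E]|\kappa^{+},E^{\Tt}_{\alpha_{0}})$, and the standard computation of this height (using that $\pi_{E^{\Tt}_{\alpha_{0}}}(\kappa)$ is a cardinal in the ultrapower strictly above $\kappa^{+}$) yields $\nu^{\Tt}_{\alpha_{0}}>\kappa^{+}$. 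Hence $O(\kappa)>\kappa^{+}$, and Lemma~\ref{Oo} delivers $o(\kappa)>\kappa^{+}$.

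For Part~2, the analogous target is $\nu^{\Tt}_{\alpha_{0}}>\mu$, after which Lemma~\ref{Oo} again yields $o(\kappa)>\mu$. I would argue by contradiction, assuming $\nu^{\Tt}_{\alpha_{0}}\leq\mu$ and, more generally, that every extender used on the main branch of $\Tt$ has $\lambda$-value $<\mu$; by normality of $\Tt$ this also confines every $\crit(E^{\Tt}_{\beta})$ in the relevant segment to the interval $[\kappa,\mu)$. Because $\cf(\mu)>\kappa$ and the premice on the tree satisfy the GCH-like acceptability axiom, a step-by-step induction shows that each such ultrapower is continuous at $\mu$ and sends ordinals $<\mu$ to ordinals $<\mu$, so $\pi^{\Tt}_{\xi,\alpha+1}(\mu)=\mu$ at each step. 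Composing along the finitely many steps of the main branch gives $\pi^{\Tt}_{0,\infty}(\mu)=\mu$, whence $j(\kappa)=\pi^{\Tt}_{0,\infty}(\kappa)<\mu$, contradicting $j(\kappa)>\mu$. Therefore some extender on the main branch has $\lambda$-value $\geq\mu$, and pushing the analysis one level further pins its critical point down to $\kappa$, giving $\nu^{\Tt}_{\alpha_{0}}>\mu$ and thus $O(\kappa)>\mu$.

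The main obstacle is the cofinality-and-continuity verification in Part~2: the tree-theoretic setup is identical to Part~1, but ruling out the alternative in which a finite chain of extenders with $\lambda$-value $<\mu$ cumulatively carries $\kappa$ past $\mu$, together with the complementary alternative in which the extender with $\lambda\geq\mu$ has critical point strictly above $\kappa$ while the $\kappa$-critical-point extender $E^{\Tt}_{\alpha_{0}}$ sits at a small index, requires delicate bookkeeping of the successive critical points, $\lambda$-values, and images $\pi^{\Tt}_{0,\beta}(\kappa)$ along the finite main branch of $\Tt$, and it is precisely the hypothesis $\cf(\mu)>\kappa$ that rules both scenarios out.
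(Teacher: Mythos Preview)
Your argument has a genuine gap in the coherence claim. You assert that ``by coherence of normal iteration trees $E^{\Tt}_{\alpha_{0}}=E^{L[E]}_{\nu^{\Tt}_{\alpha_{0}}}$ lies on the $L[E]$-sequence,'' but this holds only when $\alpha_{0}=0$. If $\alpha_{0}>0$ then $\nu^{\Tt}_{\alpha_{0}}>\nu^{\Tt}_{0}$ by normality, while Fact~\ref{fact1} guarantees agreement $\cM^{\Tt}_{\alpha_{0}}|\nu^{\Tt}_{0}=L[E]|\nu^{\Tt}_{0}$ only up to $\nu^{\Tt}_{0}$; above that the sequences may differ, so $E^{\Tt}_{\alpha_{0}}$ need not appear on $E$ at all. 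Your computation that $\nu^{\Tt}_{\alpha_{0}}>\kappa^{+}$ is fine, but it gives $O(\kappa)>\kappa^{+}$ only once the extender is known to sit on the $L[E]$-sequence. The paper splits on whether $\alpha_{0}=0$ (where your argument works) or $\alpha_{0}>0$; in the latter case it first checks that $\rho_{1}(\cM^{\Tt}_{\alpha_{0}}||\nu^{\Tt}_{\alpha_{0}})>\kappa^{+}$ (respectively $>\mu$) and then invokes Lemma~\ref{IS} to reflect the top extender of $\cM^{\Tt}_{\alpha_{0}}||\nu^{\Tt}_{\alpha_{0}}$ down to some $E_{\gamma}^{\cM^{\Tt}_{\alpha_{0}}}$ with $\crit=\kappa$ and $\gamma$ below $\nu^{\Tt}_{0}$, which \emph{is} on the $L[E]$-sequence.

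Your Part~2 strategy is also more circuitous than necessary and never uses the one feature that distinguishes $\mu$-strongness from $\mu$-tallness, namely $V_{\mu}\subseteq M$. The paper exploits this directly (Claim~\ref{MuStrong}): if $\nu^{\Tt}_{0}<\mu$ then $L[E]|\mu$ witnesses $\cf(\nu^{\Tt}_{0})<\nu^{\Tt}_{0}$, and since $L[E]|\mu\subseteq V_{\mu}\subseteq M=\cM^{\Tt}_{\infty}$ this contradicts $\nu^{\Tt}_{0}$ being a successor cardinal in $\cM^{\Tt}_{\infty}$. Thus $\nu^{\Tt}_{0}>\mu$ outright, and the case split $\alpha_{0}=0$ versus $\alpha_{0}>0$ proceeds exactly as in Part~1. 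Your alternative bounding argument---essentially the machinery of Theorem~A---would require making both the continuity step and the vague ``pushing one level further'' step precise, and even if you succeeded in showing $\nu^{\Tt}_{\alpha_{0}}>\mu$, you would still face the same coherence gap when passing from an index in $\cM^{\Tt}_{\alpha_{0}}$ to one in $L[E]$.
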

\begin{proof}  Let $j:L[E] \rightarrow M$ witness either that $\kappa$ is measurable or that $\kappa$ is $\mu$-strong. Because $cf(\mu) > \kappa$ in both cases we have that $M^{\kappa} \subseteq M $.

	Let $\Tt$ be the iteration tree induced by $j$ and let $E_{\alpha}^{\Tt}$ be the first extender applied to $\cM_{0}^{\Tt}$ such that $\alpha+1 \in b^{\Tt}$, the main branch of $\Tt$. Since $ \crit(j)=\crit(\pi_{0,\infty}^{\mathcal{T}}) = \kappa$ and $\mathcal{T}$ is a normal iteration tree, it follows that $\crit(E_{\alpha}^{\Tt}) = \kappa$. 
	
		\begin{claim}\label{MuStrong} If $j$ witnesses that $\kappa$ is $\mu$-strong, then  $\nu_{0}^{\mathcal{T}} > \mu$.
		\end{claim}
		\begin{proof}
			Suppose not. Then $\nu_{0}^{\mathcal{T}} \leq \mu$, and since we do not index extenders on cardinals it follows that $\nu_{0} < \mu$.  Hence,
			\begin{gather*}    (L[E]|\mu ) \models ``cf(\nu_{0}) \leq  (crit(E_{0}^{\mathcal{T}})^{+L[E]|\nu_{0}}) < \nu_{0} < \mu". 
			\end{gather*}
			Since
			\begin{gather*} M|\mu= \mathcal{M}^{\mathcal{T}}_{\infty}|\mu \supseteq V_{\mu}^{L[E]} \supseteq L[E]|\mu,
			\end{gather*} 
			it follows that  
			\begin{gather*} 
			M \models ``cf(\nu_{0}) \leq  (crit(E_{0}^{\mathcal{T}})^{+L[E]|\nu_{0}}) < \nu_{0} < \mu". 
			\end{gather*}
			But $\mathcal{M}^{\mathcal{T}}_{\infty} = M$ and $ \mathcal{M}^{\mathcal{T}}_{\infty}  \models ``\nu^{\mathcal{T}}_{0} \ \text{is regular cardinal}"$, which is a contradiction. \end{proof}
	
	$\blacktriangleright$ Suppose $ \alpha = 0 $. We split our analysis into two cases depending on whether $j$ witnesses that $\kappa$ is a measurable cardinal or $j$ witnesses that $\kappa$ is $\mu$-strong. 
	
	Suppose $j$ witnesses that  $\kappa$ is a measurable cardinal. As $\alpha = 0$, it follows that $E_{0}^{\Tt}$ is a total measure over $L[E]$ which implies that $\nu_{0}^{\Tt} > \kappa^{+}$. Therefore $E_{0}^{\Tt}$ witnesses that $O(\kappa)>\kappa^{+}$. By Lemma \ref{Oo} we have $o(\kappa)>\kappa^{+}$.
	
	Suppose $j$ witnesses that $\kappa$ is $\mu$-strong. In this case, by Claim \ref{MuStrong} it follows that $E_{0}^{\Tt}$ witnesses $O(\kappa) > \mu$. Therefore by Lemma \ref{Oo} it follows that $o(\kappa) > \mu$. 
	
	$\blacktriangleright$ Suppose we are in the case where $\alpha > 0$. 
		
	\begin{claim}\label{Projectum} If $j$ witnesses that $\kappa$ is measurable, 		 then for every $\gamma \geq \nu_{\alpha}^{\Tt }$ we have $\rho_{1}(\cM_{\alpha}^{\Tt}||\gamma) > \kappa^{+}$ . If $j$ witnesses that $\kappa$ is $\mu$-strong, then for every $\gamma \geq \nu_{\alpha}^{\Tt }$ we have $\rho_{1}(\cM_{\alpha}^{\Tt}||\gamma) > \mu$. 
	\end{claim} 
	\begin{proof}	
		 Suppose $j$ witnesses that $\kappa$ is a measurable cardinal. By Lemma \ref{lowcp3} it follows that $\kappa \leq \crit(E_{0}^{\Tt}) < \nu_{0}^{\Tt}$.  As  $\cM_{0}^{\Tt}|\nu_{0}^{\Tt} = \cM_{\alpha}^{\Tt}|\nu_{0}^{\Tt}$ it follows that $\kappa^{+\cM_{0}^{\Tt}|\nu_{0}^{\Tt}} = \kappa^{+\cM_{\alpha}^{\Tt}} < \nu_{0}^{\Tt}$. Since  $\crit(E_{\alpha}^{\Tt})=\kappa$, $\alpha+1 \not\in D^{\Tt}$ and  $E_{\alpha}^{\Tt}$ is the first extender used along the main branch of $\Tt$, altogether we have that $\kappa^{+}=\kappa^{+\cM_{\alpha}}$. Hence $\kappa^{+} <\nu_{0}^{\Tt}$.
		
		As every initial segment of $\cM_{0}^{\Tt}$ is sound, it follows that  $\rho_{1}(\cM_{0}^{\Tt}|\gamma) \geq \kappa^{+}$ for any $\gamma \geq \nu_{0}^{\Tt}$. Hence one can verify by  induction along the branch $(0,\alpha]_{T}$ that $\rho_{1}(\cM_{\beta}^{\Tt}||\gamma) > \kappa^{+} $ for any $\beta \in (0,\alpha]_{T}$ and any $\gamma \geq \nu_{0}^{\Tt}$. 
		
		Next suppose $j$ witnesses that $\kappa$ is a $\mu$-strong cardinal. Then $\nu_{\alpha}^{\Tt} > \nu_{0}^{\Tt} > \mu$, where the second inequality is Claim \ref{MuStrong} and we have $\rho_{1}(\cM_{0}^{\Tt}||\gamma) \geq \mu$ for any $\gamma \geq \nu_{0}^{\Tt}$.  Again, by induction along $(0,\alpha]_{T}$ one can verify that $\rho_{1}(\cM_{\beta}^{\Tt}||\gamma) > \mu$ for any $\beta \in (0,\alpha]_{T}$ and any $\gamma \geq \nu_{0}^{\Tt}$. 
	\end{proof} 
	 
	 We again split our analysis depending on whether $j$ witnesses that $\kappa$ is a measurable cardinal or $j$ witnesses that $\kappa$ is $\mu$-strong. 
	 
	 Suppose $j$ witnesses that $\kappa$ is a measurable cardinal. By Claim \ref{Projectum} we have $\rho_{1}(\cM_{\alpha}^{\Tt}||\nu_{\alpha}^{\Tt}) \geq \kappa^{++\cM_{\alpha}||\nu_{\alpha}^{\Tt}}$, hence we can apply Lemma \ref{IS} to $\cM_{\alpha}^{\Tt}||\nu_{\alpha}^{\Tt}$ and $E_{\alpha}^{\Tt}$ to find $E_{\gamma}^{\mathcal{M}_{\alpha}^{\mathcal{T}}}$ with $crit(E_{\gamma}^{\mathcal{M}_{\alpha}^{\mathcal{T}}||\nu_{\alpha}^{\Tt}}) = \kappa $ and $ \gamma \in (\kappa^{+},\kappa^{++\cM_{\alpha}^{\Tt}||\nu_{\alpha}^{\Tt}}) $. As $\nu_{0}^{\Tt} > \kappa^{++\cM_{\alpha}^{\Tt}||\nu_{\alpha}^{\Tt} }$ it follows that $E_{\gamma}^{\cM_{\alpha}^{\Tt}}=E_{\gamma}$. Therefore $O(\kappa) > \kappa^{+}$ and by Lemma \ref{Oo} we have $o(\kappa)>\kappa^{+}$.
	
	  Suppose $j$ witnesses that $\kappa$ is a $\mu$-strong cardinal.  By Claim \ref{Projectum} we have $\rho_{1}(\cM_{\alpha}^{\Tt}||\nu_{\alpha}^{\Tt}) \geq \mu^{+\cM_{\alpha}||\nu_{\alpha}^{\Tt}}$, hence we can apply Lemma \ref{IS} to $\cM_{\alpha}^{\Tt}||\nu_{\alpha}^{\Tt}$ and $E_{\alpha}^{\Tt}$ to find $E_{\gamma}^{\mathcal{M}_{\alpha}^{\mathcal{T}}}$ with $crit(E_{\gamma}^{\mathcal{M}_{\alpha}^{\mathcal{T}}||\nu_{\alpha}^{\Tt}}) = \kappa $ and $ \gamma \in (\kappa^{+L[E]},\mu^{+\cM_{\alpha}^{\Tt}||\nu_{\alpha}^{\Tt}}) $. As $\nu_{0}^{\Tt} \geq \mu^{+\cM_{\alpha}^{\Tt}||\nu_{\alpha}^{\Tt} }$ it follows that $E_{\gamma}^{\cM_{\alpha}^{\Tt}}=E_{\gamma}$. Thus $O(\kappa)>\mu$ which by Lemma \ref{Oo} implies $o(\kappa) > \mu$. \end{proof}

We now have all the technical tools we need to prove Theorem A. We shall use the following two results due to Hamkins which establish one direction of Theorem A. 

\begin{thm}[Theorem 2.10 in  \cite{tall}] \label{Hamkins} Suppose $V$ is an extender model $L[E]$ that is normaly iterable. If $\mu$ is a cardinal, $cf(\mu) > \kappa$  and $ o(\kappa) > \mu $ then $ \kappa \ \text{is} \ \mu\text{-tall} $. 
\end{thm}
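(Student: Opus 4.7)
First I would reduce the hypothesis via Lemma~\ref{Oo}, so that $O(\kappa) > \mu$ and the set $S := \{\beta : \crit(E_\beta) = \kappa\}$ has order type strictly greater than $\mu$. The approach is to exhibit the witnessing embedding $j$ as a single ultrapower of $V$ by an extender $F = E_\alpha$ on the $E$-sequence, selected so that $\crit(F) = \kappa$ and $\lambda(F) > \mu$. Once such an $F$ is located, I would set $M := \Ult(V, F)$ and $j := \pi_F$; iterability of $L[E]$ guarantees $M$ is wellfounded.

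Next I would verify the three conditions defining $\mu$-tallness. Conditions (a) and (b), namely $\crit(j) = \kappa$ and $j(\kappa) = \pi_F(\kappa) = \lambda(F) > \mu$, are immediate from the choice of $F$. For condition (c), $^{\kappa}M \subseteq M$, I would invoke the standard closure property of extender ultrapowers with critical point $\kappa$: since every element of $M$ has the form $\pi_F(g)(a)$ for some $g \in V$ and $a \in [\lambda(F)]^{<\omega}$, a $\kappa$-sequence of such elements can be uniformized using the underlying sequence $\langle g_\xi : \xi < \kappa\rangle \in V$ together with a single finite parameter, thereby realizing the whole sequence inside $M$.

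The key step---and the main obstacle---is producing $\alpha \in S$ with $\lambda(E_\alpha) > \mu$ strictly, not merely $\lambda(E_\alpha) \geq \mu$. By Jensen indexing $\beta = \lambda(E_\beta)^{+N_\beta}$, so the map $\beta \mapsto \lambda(E_\beta)$ is strictly increasing on $S$ and $\sup_{\beta \in S}\lambda(E_\beta) \geq \mu$ follows from $S$ having order type exceeding $\mu$. The delicate case to rule out is that the first $\mu$ elements of $S$ are cofinal in $\mu$ (which is forced by Lemma~\ref{IS} applied to any large enough initial segment of $L[E]$) while the next extender $E_{\alpha^*}$ happens to satisfy $\lambda(E_{\alpha^*}) = \mu$ exactly, which would yield only $j(\kappa) = \mu$ rather than $j(\kappa) > \mu$. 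Here the hypothesis $\cf(\mu) > \kappa$ is essential: since ultrapower maps by extenders with critical point $\kappa$ are continuous at ordinals of cofinality exceeding $\kappa$, this marginal configuration is incompatible with $\cf(\mu) > \kappa$. A further application of Lemma~\ref{IS} in an interval just past $\mu$ then yields the desired $\alpha \in S$ with $\lambda(E_\alpha) > \mu$, closing the argument.
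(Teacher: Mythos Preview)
The paper does not supply a proof of this statement; it simply cites Theorem~2.10 of \cite{tall}. Your overall strategy of taking a single ultrapower by an extender $F$ coming from the sequence is reasonable in this setting, but your argument for condition~(c), namely ${}^{\kappa}M \subseteq M$, has a genuine gap. You claim that a $\kappa$-sequence $\langle x_\xi \rangle$ with $x_\xi = \pi_F(g_\xi)(a_\xi)$ can be realized in $M$ using $\langle g_\xi : \xi < \kappa\rangle \in V$ ``together with a single finite parameter.'' This is false: the $\kappa$-many finite generators $a_\xi \in [\lambda(F)]^{<\omega}$ cannot be amalgamated into one finite set, and long-extender ultrapowers are in general \emph{not} closed under $\kappa$-sequences. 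What must actually be shown is that the sequence $\langle a_\xi : \xi < \kappa \rangle$ itself lies in $M$; when $\cf^V(\lambda(F)) \leq \kappa$ this fails outright, since a short sequence cofinal in $\lambda(F) = j(\kappa)$ cannot belong to $M$, where $j(\kappa)$ is regular.

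You have also misidentified the role of the hypothesis $\cf(\mu) > \kappa$: it is not used to force $\lambda(F)$ strictly past $\mu$, but precisely to secure the closure. The clean fix is to arrange $F$ of length exactly $\mu$ (either by choosing $E_\alpha$ with $\lambda(E_\alpha) = \mu$, or by restricting a longer $E_\alpha$ to its first $\mu$ coordinates and using the factor map). Then every $\kappa$-sequence of generators is bounded below some $\gamma < \mu$ by $\cf(\mu) > \kappa$, hence lies in $L[E]|\mu$; coherence (Fact~\ref{fact1}, via $\alpha > \mu$) gives $L[E]|\mu \subseteq M$, and ${}^{\kappa}M \subseteq M$ follows. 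This yields only $j(\kappa) \geq \mu$, so one finishes with Lemma~\ref{Salvador} rather than a direct witness; your concern about obtaining the strict inequality $j(\kappa) > \mu$ in one step is therefore unnecessary.
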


\begin{thm}[Corollary 2.7 in  \cite{tall}] \label{PropI} Suppose $V$ is an extender model $L[E]$ that is normaly iterable.  If $o(\kappa) >\kappa^{+}$ and $\sup \{\alpha < \kappa  \mid  o(\alpha) > \mu \} = \kappa$  then $ \kappa$ is $\mu$-tall. 
	
	Moreover, if $ o(\kappa)>\kappa^{+} $ and $$\sup \{\alpha < \kappa  \mid  \alpha \ \text{is a strong cardinal} \} = \kappa,$$ then
	$ \kappa$ is a tall cardinal.
\end{thm}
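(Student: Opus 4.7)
The result is due to Hamkins~\cite{tall}. The plan is to lift, by composition with a normal measure on $\kappa$, a $\mu$-tall embedding for some $\alpha'$ lying between $\kappa$ and $j_U(\kappa)$ inside the ultrapower. First I would reduce to the case $\mu\ge\kappa$ with $\mu$ regular: if $\mu<\kappa$ then any normal measure on $\kappa$ (which exists by $o(\kappa)>\kappa^+$) already witnesses $\mu$-tallness, and if $\mu$ is singular I would replace it by $\mu^+$. Next I would fix a normal measure $U$ on $\kappa$ (available on the $E$-sequence since $o(\kappa)>0$) and form the ultrapower $j_U:V\to M_U$; normality yields $\crit(j_U)=\kappa$, $j_U(\kappa)>\kappa$, and $^\kappa M_U\subseteq M_U$. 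The hypothesis ``$\kappa=\sup\{\alpha<\kappa:o(\alpha)>\mu\}$'' is expressible in $\{\in,\dot E\}$, so elementarity of $j_U$ gives
\[
M_U\models j_U(\kappa)=\sup\{\alpha'<j_U(\kappa):o(\alpha')>j_U(\mu)\}.
\]
I would then pick $\alpha'$ with $\kappa<\alpha'<j_U(\kappa)$ and $M_U\models o(\alpha')>j_U(\mu)$. Since $\mu$ is regular, so is $j_U(\mu)$ in $M_U$, whence $\cf^{M_U}(j_U(\mu))=j_U(\mu)>j_U(\kappa)>\alpha'$.

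Next I would apply Theorem~\ref{Hamkins} inside $M_U$ (which is itself an iterable extender model, as an ultrapower of $V=L[E]$), with $\alpha'$ and $j_U(\mu)$ in the roles of $\kappa$ and $\mu$. This produces an embedding $k:M_U\to N$ with $k\in M_U$, $\crit(k)=\alpha'$, $k(\alpha')>j_U(\mu)\ge\mu$, and $^{\alpha'}N\subseteq N$ in $M_U$. Setting $j:=k\circ j_U:V\to N$, I get $\crit(j)=\kappa$ (since $\kappa<\alpha'=\crit(k)$), $j(\kappa)=k(j_U(\kappa))\ge k(\alpha')>\mu$, and $^\kappa N\subseteq N$ in $V$: any $\kappa$-sequence of elements of $N$ taken in $V$ lies in $M_U$ by the $\kappa$-closure of $M_U$, and hence in $N$ by the $\alpha'$-closure of $N$ in $M_U$ (using $\kappa<\alpha'$). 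Thus $\kappa$ is $\mu$-tall.

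For the moreover clause, if $\kappa$ is a limit of strong cardinals, then for any $\mu$ and any strong $\alpha<\kappa$, Lemma~\ref{TallMeas} gives $o(\alpha)>\mu$ (apply it with $\nu:=\max\{\mu,\alpha\}^+$, so that $\alpha$ is $\nu$-strong with $\cf(\nu)>\alpha$, whence $o(\alpha)>\nu\geq\mu$); such $\alpha$ are cofinal in $\kappa$, so the first part yields $\mu$-tallness for every $\mu$, i.e., $\kappa$ is tall. The main subtlety will be the closure of $N$ under $\kappa$-sequences from $V$, which combines the $\kappa$-closure of $M_U$ in $V$ with the $\alpha'$-closure of $N$ inside $M_U$; both closures follow directly from the definitions of normal ultrapower and of $\alpha'$-tallness, and the regularity reduction at the start ensures that the cofinality hypothesis of Theorem~\ref{Hamkins} is met inside $M_U$.
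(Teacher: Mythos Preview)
The paper does not give its own proof of this statement; it is quoted from Hamkins~\cite{tall} and invoked as a black box in the $(\Leftarrow)$ direction of Theorem~A. Your argument is a faithful reconstruction of Hamkins' original composition-of-embeddings proof, adapted to the paper's $o(\cdot)$-formulation, and the first part is correct.

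Two small points deserve care. First, your parenthetical ``available on the $E$-sequence since $o(\kappa)>0$'' is not quite right under the paper's nonstandard Definition~\ref{DefO}, which counts partial extenders as well; $o(\kappa)>0$ alone does not guarantee a total measure. You should instead use $o(\kappa)>\kappa^{+}$, which by Lemma~\ref{Oo} gives $O(\kappa)>\kappa^{+}$, hence an extender on the sequence indexed above $\kappa^{+}$ that is total over $L[E]$, from which a normal measure is derived. Second, in the moreover clause you reduce to the first part via Lemma~\ref{TallMeas}, but that lemma carries the hypothesis $(\Delta)$ (in particular ``no inner model with a Woodin cardinal'' and full stack-iterability), which is strictly stronger than the ``normally iterable'' hypothesis of the present theorem. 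The clean fix is to bypass the $o(\cdot)$-formulation for the moreover clause and follow Hamkins directly: after forming $j_U$, elementarity gives that $j_U(\kappa)$ is a limit of strong cardinals in $M_U$; pick $\alpha'\in(\kappa,j_U(\kappa))$ strong in $M_U$, use the $\zfc$ fact that strong cardinals are tall to get $k:M_U\to N$ with $k(\alpha')>\mu$ and ${}^{\alpha'}N\subseteq N$ in $M_U$, and compose as before.
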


We now prove the main theorem. 
\begin{thma}\label{MainThm} Assume $(\Delta)$.  Let $\kappa <  \mu$ be regular cardinals. Suppose further that $L[E]|\mu$ is $\mu$-stable above $\kappa$. Then $  \kappa \ \text{is} \ \mu\text{-tall} $ iff \begin{gather*} o(\kappa) > \mu \\ \text{ or} \\ \Big( o(\kappa) > \kappa^{+} \wedge  sup\{ \nu < \kappa \mid  o(\nu) > \mu \} = \kappa  \Big) \end{gather*}
\end{thma}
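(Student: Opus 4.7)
The plan for $(\Leftarrow)$ is simply to invoke Hamkins: Theorem~\ref{Hamkins} covers the first disjunct (noting $\cf(\mu) > \kappa$ since $\mu$ is regular and $\kappa < \mu$), and Theorem~\ref{PropI} covers the second.

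For $(\Rightarrow)$, fix $j\colon V \to M$ witnessing $\mu$-tallness of $\kappa$. Since any $\mu$-tall cardinal is measurable, Lemma~\ref{TallMeas} delivers $o(\kappa) > \kappa^+$ for free. The remaining work splits along a dichotomy: if $\sup\{\nu < \kappa : o(\nu) > \mu\} = \kappa$, the second disjunct is already established; otherwise this supremum lies strictly below $\kappa$, and the goal becomes to prove $o(\kappa) > \mu$. Assume toward a contradiction that $o(\kappa) \leq \mu$, so by Lemma~\ref{Oo} also $O(\kappa) \leq \mu$, and apply Lemma~\ref{gap} to extract $\beta^* < \kappa$ and $\Theta \in [\kappa,\mu]$ with the gap property that $O(\eta) \leq \Theta$ whenever $\eta \in (\beta^*,\Theta]$.

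I would then invoke Lemma~\ref{InducedIteration} to realize $j$ as $\pi_{0,\infty}^{\Tt}$ for an iteration tree $\Tt$ on $L[E]$ with $M = \cM_\infty^{\Tt}$ and no drop along the main branch; Lemma~\ref{finite} to know that $\Tt$ is finite; and Lemma~\ref{lowcp3} to secure $\crit(E_\alpha^{\Tt}) \geq \kappa$ for every $\alpha \in \lh(\Tt)$. The heart of the argument, and what I expect to be the main obstacle, is to show that $\nu_\alpha^{\Tt} < \mu$ holds for every $\alpha \in \lh(\Tt)$. The approach is a finite induction on $\alpha$: each $\crit(E_\alpha^{\Tt})$ lies above $\beta^*$; whenever such a critical point lies in $(\beta^*,\Theta]$, the gap property combined with coherence of the $\cM_\alpha^{\Tt}$'s (which transports the structural bound from $L[E]$ to the intermediate premice) forces the index $\nu_\alpha^{\Tt}$ to be $\leq \Theta$; the potentially delicate case where a critical point would exceed $\Theta$ must be ruled out, or shown harmless, by tracking the main branch --- where $\crit(\pi_{0,\infty}^{\Tt}) = \kappa$ forces some extender there to have critical point exactly $\kappa$ --- and exploiting normality of $\Tt$.

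Once $\sup_\alpha \nu_\alpha^{\Tt} \leq \Theta$ is in hand, finiteness of $\Tt$ upgrades this to a strict bound $< \mu$, so Lemma~\ref{Lives} yields that $\Tt$ lives on $L[E]|\mu$. Clause $(a)$ of Lemma~\ref{IterationBounds} applied at the last model (admissible because there is no drop on the main branch) then produces $\pi_{0,\infty}^{\Tt}(\mu) = \mu$. Combined with $\kappa < \mu$ and elementarity of $\pi_{0,\infty}^{\Tt}$, this forces $j(\kappa) = \pi_{0,\infty}^{\Tt}(\kappa) < \pi_{0,\infty}^{\Tt}(\mu) = \mu$, contradicting $j(\kappa) > \mu$ and completing the proof.
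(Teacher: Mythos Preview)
Your overall architecture matches the paper's proof: the $(\Leftarrow)$ direction via Theorems~\ref{Hamkins} and~\ref{PropI}, the reduction via Lemmas~\ref{InducedIteration}, \ref{finite}, \ref{lowcp3} to a finite normal tree with critical points $\geq \kappa$, the invocation of Lemma~\ref{gap} to produce $\beta^*$ and $\Theta$, and the endgame via Lemmas~\ref{Lives} and~\ref{IterationBounds} are all exactly as in the paper.

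There is, however, a real imprecision in the heart of your induction, and it is not merely cosmetic. You state that the gap property forces $\nu_\alpha^{\Tt} \leq \Theta$ and aim for $\sup_\alpha \nu_\alpha^{\Tt} \leq \Theta$. This is not what the paper proves, and it is generally false: once the iteration moves ordinals, indices can exceed $\Theta$. The correct inductive invariant (the paper's Claim~\ref{bound}) is
\[
\nu_n^{\Tt} \leq \pi_{0,n}^{\Tt}(\Theta) \leq \mu
\]
whenever $\pi_{0,n}^{\Tt}(\Theta)$ is defined, with the second inequality coming from Lemma~\ref{IterationBounds} and $\mu$-stability. The gap property of Lemma~\ref{gap} is transported by elementarity along the branch to a statement about $(\beta^*,\pi_{0,n}^{\Tt}(\Theta)]$, not $(\beta^*,\Theta]$; this moving bound is precisely what makes the induction go through.

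Relatedly, the paper does not directly ``rule out'' large critical points. At each stage it argues by contradiction: \emph{if} $\nu_{k+1}^{\Tt} > \pi_{0,k+1}^{\Tt}(\Theta)$, a secondary induction shows that every later node lies $T$-above $k+1$ and every later extender has critical point above $\pi_{0,k+1}^{\Tt}(\Theta)$, so that $\pi_{k+1,\infty}^{\Tt}$ is the identity below that ordinal and already $j(\kappa)=\pi_{0,k+1}^{\Tt}(\kappa)\leq\pi_{0,k+1}^{\Tt}(\Theta)\leq\mu$. Your phrase ``shown harmless, by tracking the main branch'' points in the right direction but does not identify this mechanism; without the moving image of $\Theta$, the step you describe as ``forces the index $\nu_\alpha^{\Tt}$ to be $\leq\Theta$'' would simply fail at stage $1$.
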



\begin{proof}
($\Leftarrow$) It follows from  \ref{Hamkins} and \ref{PropI}.

($\Rightarrow$) Since $\kappa $ is $\mu$-tall, we have that $\kappa$ is measurable and hence by Lemma \ref{TallMeas} we also have that $o(\kappa) > \kappa^{+}$.

 Suppose that  $ \kappa > sup(\{ \alpha < \kappa \mid  o(\alpha) > \mu \}) $. We will verify that $o(\kappa) > \mu$. Towards a contradiction, suppose that $o(\kappa) \leq \mu$. 
 
   By \ref{Oo}, $  sup(\{\beta< \kappa \mid  o(\beta) > \mu  \} ) < \kappa $  implies that $sup(\{\beta < \kappa \mid O(\beta) > \mu\}) < \kappa$. Let  
\begin{gather*} \beta^{*}:= sup(\{\beta< \kappa \mid  O(\beta) > \mu  \} ), 
\end{gather*}  and set 
\begin{gather*} \Theta:= sup(\{O(\beta) \mid \beta^{*}< \beta \leq \kappa \}).   
\end{gather*}

Notice that $\Theta \leq \mu$: by Lemma \ref{Oo}, $o(\kappa) \leq \mu $ implies $O(\kappa) \leq \mu$ and the definition of $\beta^{*}$ implies that  $sup(\{O(\beta) \mid \beta^{*}< \beta < \kappa \}) \leq \mu$.

  Let $ j:V \rightarrow M $ witnesses the $ \mu$-tallness of $\kappa$ and let $ \mathcal{T}$ be the iteration tree induced by $j$. Lemma \ref{InducedIteration} gives that $j= \pi^{\Tt}_{0,\infty}$ and $\mathcal{M}_{\infty}^{\mathcal{T}} = \cK^{M} = M$  and Lemma \ref{finite} implies  that $\Tt$ is finite.

Let $b$ be the main branch of $\mathcal{T}$. We know by Theorem \ref{VM} (or Lemma \ref{InducedIteration}) that there is no drop along $b$ so we can define
\begin{gather*} t_{0} = min  \Big(\{ m \in b \ |  \  \pi_{0,m}^{\Tt}(\kappa)=\pi_{0,\infty}^{\Tt}(\kappa) \} \Big).
\end{gather*}

For $\Upsilon \in \ord$ and $k \in lh(\Tt) $  let $\psi(k,\Upsilon)$ denote the following statement:
$$\cM_{k}^{\Tt} \models ``\forall \zeta > \Upsilon ~  ( crit(E_{\zeta}) \not\in (\mu,\Upsilon))".$$

\begin{claim}\label{bound} Let $n \leq t_{0}$. Then $\nu^{\mathcal{T}}_{n} <\mu$ and whenever $\pi^{\Tt}_{0,n}(\Theta)$ is defined we have $ \nu^{\Tt}_{n} \leq \pi^{\Tt}_{0,n}(\Theta) \leq \mu$ .
\end{claim}

Before we start with the proof of Claim \ref{bound} we observe that Lemma \ref{Lives} and  Claim \ref{bound} imply that $\Tt$ lives on $L[E]|\mu$. Our hypothesis that $L[E]| \mu$ is $\mu$-stable above $\kappa$ together with Lemma \ref{StablePM} imply that $\pi_{0,\infty}^{\Tt}(\kappa) \leq \mu$. This give us a contradiction since $\mu < j(\kappa)=\pi_{0,\infty}^{\Tt}(\kappa)$ and therefore we have  $o(\kappa)>\mu$. Hence Theorem A will follow once we prove Claim \ref{bound}.

\begin{proof}[Proof of Claim \ref{bound}]
We prove this by induction on $n \leq t_{0}$. We start with the base case.
 
 \begin{subclaim} \label{BaseStep} For $n=0$, we have that $\nu^{\Tt}_{0} \leq \Theta$ and $\nu^{\Tt}_{0} < \mu$.
 	\end{subclaim} 
 	\begin{proof} For a contradiction suppose that $\nu^{\Tt}_{0} > \Theta $. We will prove that for all $k + 1 < \lh(\Tt)$ we have $\crit(E_{k}^{\Tt})> \Theta$. This will imply that if $E_{k_{0}}^{\Tt}$ is the first extender applied to $\cM_{0}^{\Tt}$ with $k_{0}+1 \in [0,t_{0}]_{T}$, then 
 		$$ \crit(\pi^{\Tt}_{0,\infty}) = \crit(E_{k_{0}}^{\Tt}) > \Theta > \kappa = \crit(\pi^{\Tt}_{0,\infty}),$$ which will give us a contradiction.

  Suppose  $\psi(k,\Theta)$ holds. That is, 
  $$\cM_{k}^{\Tt} \models ``\forall \zeta > \Theta ~  ( crit(E_{\zeta}) \not\in (\mu,\Theta))".$$
  As $\Tt$ is normal we have $\nu_{k}^{\Tt} \geq \nu_{0}^{\Tt} >\Theta$, and therefore, $\crit(E_{k}^{\Tt}) \leq \beta^{*}$ or $\crit(E_{k}^{\Tt}) > \Theta$. By Lemma \ref{lowcp3}, it follows that $\crit(E_{k}^{\Tt}) >\Theta$. Thus given $k\in \lh(\Tt)$, $\psi(k,\Theta)$ implies $\crit(E_{k}^{\Tt})>\Theta$. 
 
We will verify by induction that $ \psi(k,\Theta)$ holds for all $k \in lh(\Tt)$. For $k=0$, Lemma \ref{gap} implies $ \psi(0,\Theta)$. Suppose now that $k \in \lh(\Tt)$ and $\psi(l,\Theta)$ holds for all $l \leq k$. We will prove that  $\psi(k+1,\Theta)$ holds.  

As observed above, $ \psi(k,\Theta)$ implies $\crit(E_{k}^{\Tt}) >\Theta$.  By the induction hypothesis, $ \psi(k,\Theta)$ holds, and hence we have\footnote{Recall $\eta^{\Tt}_{k+1}$ is the height of the model we apply $E_{k}^{\Tt}$ to form $\cM_{k+1}^{\Tt}$.} 
$$\eta^{\mathcal{T}}_{k} > crit(E_{k}^{\mathcal{T}}) > \Theta.$$ By induction hypothesis we also have   $\psi(\xi^{\Tt}_{k},\Theta)$, therefore the following holds:
 \begin{gather*}  (\mathcal{M}^{\mathcal{T}}_{\xi_{k}})||\eta_{k}^{\Tt}  \models ~ ``\forall  \gamma > \Theta ~ ( crit(E_{\gamma}) \not\in (\mu,\Theta) )". 
\end{gather*} 

 Thus by the $\Sigma_{1}$-elementarity of $\pi_{\xi_{k+1},k}^{\Tt}$ it follows that $\psi(k+1,\Theta)$ holds. This concludes the proof that $\psi(k,\Theta)$ holds for all $k \in \lh(\Tt)$.

 As observed above, this implies that $\crit(\pi_{0,\infty}^{\Tt}) > \Theta$, which is a contradiction.
 
 Hence  \begin{gather*}\label{case0} \nu^{\mathcal{T}}_{0} \leq \Theta \leq \mu,
 \end{gather*} and as $\mu$ is a cardinal we also have $\nu_{0}^{\Tt} <\mu$. This concludes the case $n=0$ of Claim \ref{bound} and the proof of subclaim \ref{BaseStep}.  \end{proof}

We now perform the inductive step of the proof of Claim \ref{bound}. We shall need to split this into two cases. 

$\blacktriangleright$ Suppose $n=k+1$ and $\pi_{0,k+1}^{\Tt}(\Theta)$ is not defined. 
By our induction hypothesis  $\nu_{l}^{\Tt} <\mu$  for all $l < k+1$, hence by Lemma \ref{Lives}, the iteration tree  $\mathcal{T}|(k+2)$ lives on $L[E]|\mu$. Therefore by Lemma \ref{IterationBounds},
\begin{gather}\label{BoundThetaI}  \cM^{\Tt}_{k+1}=\cM^{\Uu}_{k+1}
\end{gather}  where $\Uu=(\Tt\restriction k+2)\restriction (L[E]|\mu)$. By Lemma \ref{StablePM} we have $\cM^{\Uu}_{k+1}\cap \ord \leq \mu$. Therefore $\nu^{\Tt}_{k+1} <\mu$. 
 
$\blacktriangleright$ Suppose $ n=k+1$ and $\pi_{0,k+1}^{\Tt}(\Theta)$ is defined.
By our induction hypothesis  $\nu_{l}^{\Tt} <\mu$  for all $l < k+1$, hence by Lemma \ref{Lives} the iteration tree  $\mathcal{T}\restriction (k+2)$ lives on $L[E]|\mu$. Therefore by Lemma \ref{IterationBounds}
\begin{gather}\label{BoundTheta}  \pi_{0,k+1}^{\mathcal{T}}(\Theta) \leq \mu.
\end{gather}
Thus we only have to verify that $\nu_{k+1}^{\Tt} \leq \pi_{0,k+1}^{\Tt}(\Theta)$.
 From our induction hypothesis we have $\nu^{\mathcal{T}}_{\xi_{k}^{\Tt}} \leq \pi_{0,\xi_{k}^{\Tt}}^{\Tt}(\Theta)$. Let $\tau_{k}^{\Tt}=crit(E^{\mathcal{T}}_{k})^{+\cM^{\Tt}_{k+1}||\nu_{k}^{\Tt}}$, then 
 $$ \tau_{k}^{\Tt} < \nu^{\mathcal{T}}_{\xi_{k}^{\Tt}} \leq \pi^{\Tt}_{0,\xi_{k}^{\Tt}}(\Theta), $$
  which implies the following:
  \begin{gather*} \pi_{0,k+1}^{\Tt}(\Theta)   \geq   \pi^{\mathcal{T}}_{\xi_{k}^{\Tt},k+1}(\nu^{\mathcal{T}}_{\xi_{k}^{\Tt}}) > \pi^{\mathcal{T}}_{\xi_{k}^{\Tt},k+1}(\tau^{\mathcal{T}}_{k})=\nu^{\mathcal{T}}_{k}.
  \end{gather*}   Therefore, \begin{gather}\label{trapped}\nu^{\mathcal{T}}_{k} < \pi_{0,k+1}^{\Tt}(\Theta). \end{gather} 
 
 Suppose for a contradiction that $ \nu^{\mathcal{T}}_{k+1} > \pi_{0,k+1}^{\Tt}(\Theta)$. Using Lemma \ref{gap}, one can verify by induction\footnote{We use induction like in case $n=0$, there may be drops in model along $[0,k+1]_{\mathcal{T}}$ but by hypothesis $\pi_{0,k+1}^{\Tt}(\Theta) $ is defined for all $m \in [0,k+1]_{\mathcal{T}}$.} along $[0,k+1]_{\mathcal{T}}$ that $ \psi(k+1,\pi^{\Tt}_{0,k+1}(\Theta))$ holds. Recall that  $\psi(k+1,\pi^{\Tt}_{0,k+1}(\Theta))$ denotes
\begin{gather*}\mathcal{M}^{\mathcal{T}}_{k+1} \models ``\forall \nu > \pi_{0,k+1}^{\Tt}(\Theta) \ (\ crit(E_{\nu}) \not\in (\beta^{*}, \pi_{0,k+1}^{\Tt}(\Theta)))". \end{gather*}

\begin{subclaim} \label{claimtrapped}
	For $l \in \lh(\Tt)$, if  $k+1 < l$ then \begin{equation}\label{trappedI} 
	\begin{gathered}    ( k+1 <_{T} l ) \wedge  \psi(l,\pi_{0,k+1}^{\Tt}(\Theta)).
	\end{gathered}\end{equation}
\end{subclaim}
\begin{proof}We will verify this by induction, similar to what we did for the case $n=0$.

We shall start by verifying that $k+1 <_{T} k+2$. As $\nu_{k+1}^{\Tt} > \pi_{0,k+1}^{\Tt}(\Theta)$ and $\psi(k+1,\pi_{0,k+1}^{\Tt}(\Theta))$ holds, we must have, by \eqref{trapped}, that $\crit(E_{k+1}^{\Tt}) > \Theta > \nu^{\Tt}_{k}$. Thus $\xi_{k+1}^{\Tt} = k+1$ and $k+1 <_{T} k+2$. 

Next we verify that  $\psi(k+2,\pi_{0,k+1}^{\Tt}(\Theta))$ holds. As $\psi(k+1,\pi_{0,k+1}^{\Tt}(\Theta))$ holds, it follows that  \begin{gather*}
(\mathcal{M}_{\xi_{k+1}^{\Tt}})^{\mathcal{T}}||\eta^{\Tt}_{k+1} \models  ``\forall  \zeta > \pi^{\Tt}_{0,k+1}(\Theta) \  ( crit(E_{\zeta}) \not\in (\beta^{*},\pi_{0,k+1}^{\Tt}(\Theta)) )".  
\end{gather*} Then by the $\Sigma_{1}$-elementarity of $\pi_{k+1,k+2}^{\Tt}$ it follows that $\psi(k+2,\pi^{\Tt}_{0,k+1}(\Theta))$ holds. This concludes the base step of this induction.

We now prove the inductive step. Suppose \eqref{trappedI} holds for all $l$ such that  $k+1 < l \leq m $, let us verify that \eqref{trappedI} holds for $m+1$. As $\Tt$ is normal, we have $\nu^{\mathcal{T}}_{m} > \nu^{\mathcal{T}}_{k}$. By our induction hypothesis $\psi(m,\pi_{0,k+1}^{\Tt}(\Theta))$ holds and we have only two possibilities: $crit(E^{\mathcal{T}}_{m}) \leq \beta^{*} $ or $crit(E^{\mathcal{T}}_{m}) > \pi_{0,k+1}^{\Tt}(\Theta)$. 

The first possibility is excluded by Lemma \ref{lowcp3}, and so it must be true that $crit(E^{\mathcal{T}}_{m}) > \pi_{0,k+1}^{\Tt}(\Theta)$. 
By \eqref{trapped}, $ crit(E^{\mathcal{T}}_{m}) > \pi_{0,k+1}^{\Tt}(\Theta)$ implies $crit(E^{\mathcal{T}}_{m}) > \nu^{\mathcal{T}}_{k} $. Therefore $ k+1 \leq \xi_{m}^{\Tt} = \pred_{T}((m+1))$ and thus we have either $k+1 = \xi_{m}^{\Tt}$ or $k+1 < \xi_{m}^{\Tt}$. 

If the former is true, then we can appeal to $\psi(k+1,\pi_{0,k+1}^{\Tt}(\Theta))$,  and if the latter is true, then we can appeal to the induction hypothesis, and in either case, we can conclude that $\psi(\xi_{m}^{\Tt},\pi_{0,k+1}(\Theta))$ holds and 
\begin{gather*}
(\mathcal{M}_{\xi_{m}^{\Tt}})^{\mathcal{T}}||\eta^{\Tt}_{m} \models  ``\forall  \zeta > \pi^{\Tt}_{0,k+1}(\Theta) \  ( crit(E)_{\zeta}) \not\in (\beta^{*},\pi_{0,k+1}^{\Tt}(\Theta)) )".  
\end{gather*}

By $\Sigma_{1}$-elementarity  of $\pi_{\xi_{m}^{\Tt},m+1}^{\Tt}$, we have $\psi(m+1,\pi_{0,k+1}^{\Tt}(\Theta))$. Now if $k+1=\xi_{m}^{\Tt}$, then by the definition of $\xi_m^{\Tt}$, we have $k+1 <_{T} m+1$. On the other hand, if $k+1 < \xi_{m}^{\Tt}$, then by the induction hypothesis, $ k+1 <_{T} \xi_{m}^{\Tt} $, and so again we have that $k+1 <_{T} m+1$. This concludes the inductive step and the induction and verifies Subclaim \ref{claimtrapped}. 
\end{proof}

Given $l \in \lh(\Tt)$ such that $l \geq k +1$, we have that  $\psi(l,\pi_{0,k+1}^{\Tt}(\Theta))$ implies $\crit(E_{l}^{\Tt}) > \pi_{0,k+1}^{\Tt}(\Theta)$. Therefore  Subclaim~\eqref{claimtrapped} gives that $$\pi_{k+1,\infty}^{\mathcal{T}}\restriction (\pi^{\Tt}_{0,k+1}(\Theta)+1) = id\restriction (\pi_{0,k+1}^{\Tt}(\Theta)+1).$$ Hence  \begin{gather*} \pi_{0,\infty}^{\mathcal{T}}(\kappa) = \pi_{k+1,\infty} ^{\mathcal{T}}\circ \pi^{\mathcal{T}}_{0,k+1}(\kappa) =  id \circ \pi_{0,k+1}^{\mathcal{T}}(\kappa)\leq  \pi^{\Tt}_{0,k+1}(\Theta) \leq \mu, 
\end{gather*} where the last inequality is given by   \eqref{BoundTheta}. This contradicts the fact that $$\pi_{0,\infty}^{\Tt}(\kappa)=j(\kappa) >\mu.$$  Thus $\nu^{\mathcal{T}}_{k+1} \leq \pi_{0,k+1}^{\Tt}(\Theta) \leq \mu$. This verifies the case $n=k+1$. \end{proof}

As observed before the proof of Claim \ref{bound}, we have by Lemma \ref{Lives} and Claim \ref{bound} that $ \mu < j(\kappa) = \pi_{0,\infty}^{\mathcal{T}}(\kappa) \leq \mu$ which is a contradiction. Thus $o(\kappa) > \mu$. This concludes the proof of Theorem A.  
\end{proof}

\begin{definition}(Hamkins) A cardinal $\kappa$ is \emph{$<\alpha$-tall} if and only if for all $\beta < \alpha \ \ \kappa $ is  $\beta$-tall.
\end{definition}
\begin{corollary} \label{maincor} Assume $(\Delta)$.  Suppose that $ \alpha$ is a limit cardinal and $cf(\alpha)> \kappa$. Then $ \kappa \ \text{is} \ <\alpha\text{-tall}$ iff \begin{gather*} ``o(\kappa) \geq  \alpha   \\ \text{or}   \\   \ o(\kappa) > \kappa^{+} \wedge \kappa = sup\{ \beta < \kappa \mid o(\beta) \geq \alpha \}"  
\end{gather*}
\end{corollary}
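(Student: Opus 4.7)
The plan is to reduce the corollary to Theorem~A applied to a cofinal family of regular cardinals below $\alpha$. The key preliminary observation is that, since $\alpha$ is a limit cardinal, there are cofinally many cardinals $\theta<\alpha$ with $\cf(\theta)=\omega$; for each such $\theta$, the successor $\mu:=\theta^{+}$ is a regular cardinal below $\alpha$ for which $L[E]|\mu$ is $\mu$-stable above $\kappa$, via clause~(2)(b) of Definition~\ref{stableDef}. Thus Theorem~A is available for cofinally many regular $\mu\in(\kappa,\alpha)$.

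For the ($\Leftarrow$) direction I would invoke Theorems~\ref{Hamkins} and~\ref{PropI}. If $o(\kappa)\geq\alpha$, then given $\beta<\alpha$, pick a regular $\mu$ with $\max(\beta,\kappa)<\mu<\alpha$; since $o(\kappa)>\mu$ and $\cf(\mu)=\mu>\kappa$, Theorem~\ref{Hamkins} yields $\mu$-tallness and hence $\beta$-tallness. Under the second alternative, $o(\kappa)>\kappa^{+}$ holds and, for every regular $\mu\in(\kappa,\alpha)$, the set $\{\beta<\kappa\mid o(\beta)\geq\alpha\}$ is contained in $\{\beta<\kappa\mid o(\beta)>\mu\}$, so the latter has supremum $\kappa$ and Theorem~\ref{PropI} applies.

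For the ($\Rightarrow$) direction, since $\kappa$ is $\beta$-tall for some $\beta>\kappa$, $\kappa$ is measurable and hence $o(\kappa)>\kappa^{+}$ by Lemma~\ref{TallMeas}. Assume towards a contradiction that $o(\kappa)<\alpha$ and that $\gamma:=\sup\{\nu<\kappa\mid o(\nu)\geq\alpha\}<\kappa$. The map $\nu\mapsto o(\nu)$ on $(\gamma,\kappa)$ takes values in $\alpha$, and as its domain has size $\leq\kappa<\cf(\alpha)$, this range is bounded in $\alpha$ by some $\beta_{0}<\alpha$. Using the preliminary, fix a regular $\mu$ with $\max(\beta_{0},o(\kappa))<\mu<\alpha$ such that $L[E]|\mu$ is $\mu$-stable above $\kappa$. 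Then $\kappa$ is $\mu$-tall, so Theorem~A forces either $o(\kappa)>\mu$, which fails by the choice of $\mu$, or $\sup\{\nu<\kappa\mid o(\nu)>\mu\}=\kappa$, which also fails, since for $\nu\in(\gamma,\kappa)$ we have $o(\nu)\leq\beta_{0}<\mu$, forcing that set into $[0,\gamma]$. The main obstacle is the preliminary existence claim providing cofinally many $\mu$-stable regular $\mu<\alpha$ above arbitrary prescribed bounds; once that is in place, the rest is a routine application of Theorem~A combined with a bounding argument using $\cf(\alpha)>\kappa$.
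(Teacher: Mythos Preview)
Your proof is correct and follows the same overall strategy as the paper: reduce to Theorem~A at cofinally many regular $\mu<\alpha$ for which $L[E]|\mu$ is $\mu$-stable above $\kappa$, and exploit $\cf(\alpha)>\kappa$. There are, however, two minor differences worth noting.

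First, your source of $\mu$-stable cardinals is $\mu=\theta^{+}$ for $\theta$ of countable cofinality (invoking clause~(2)(b) of Definition~\ref{stableDef}), whereas the paper takes $\mu=\mu_{\xi}^{++}$ for a cofinal sequence $\langle\mu_{\xi}\rangle$ in $\alpha$ (invoking clause~(2)(c), since $\mu_{\xi}^{+}$ is a successor cardinal and hence not the critical point of any total measure). Both choices work equally well.

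Second, and more substantively, your $(\Rightarrow)$ argument is more direct. The paper shows that each $B_{\xi}:=\{\beta<\kappa\mid o(\beta)>\mu_{\xi}^{++}\}$ is cofinal in $\kappa$ by Theorem~A, observes that $\langle B_{\xi}\mid\xi<\cf(\alpha)\rangle$ is $\subseteq$-decreasing, and then uses $\cf(\alpha)>\kappa$ to argue this sequence is eventually constant, yielding a cofinal $B\subseteq\kappa$ on which $o(\beta)\geq\alpha$. You instead use $\cf(\alpha)>\kappa$ up front to bound $\sup\{o(\nu)\mid\gamma<\nu<\kappa\}<\alpha$ (since the domain has size $\leq\kappa$), and then a \emph{single} application of Theorem~A at one well-chosen $\mu$ produces the contradiction. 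Your route avoids the stabilization argument entirely and is slightly cleaner; the paper's route has the mild advantage of explicitly exhibiting the cofinal set $B$.
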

\begin{proof}$(\Leftarrow)$ It follows from \ref{Hamkins} and \ref{PropI}.
	
$(\Rightarrow)$ Let $\langle\mu_{\xi} \mid \xi < cf(\alpha) \rangle $ be a cofinal sequence in $\alpha$. Note that for $\mu:= \mu^{++}_{\xi}$ we are in hypothesis of Theorem A. If for each $\mu^{++}_{\xi} > \kappa$  we have $o(\kappa) > \mu^{++}_{\xi}$ then $o(\kappa) \geq \alpha$ and we are done.  

Suppose $o(\kappa) <\alpha$. By Lemma \ref{TallMeas}, we have $o(\kappa)>\kappa^{+}$,  therefore we will be done if we find  $B \subseteq \kappa$ such that $B$ is cofinal in $\kappa$ and for all $\beta \in B $ we have $o(\beta) \geq \alpha$. 

Fix $\xi < cf(\alpha)$ such that $\mu^{++}_{\xi} > o(\kappa)$. As $\kappa$ is $\alpha$-tall, it follows that $\kappa$ is $\mu^{++}_{\xi}$-tall. Applying Theorem A to $\kappa$ and  $\mu_{\xi}^{++}$, as $o(\kappa) <\mu_{\xi}^{++}$, this gives us a set $Y \subseteq \kappa$ cofinal in $\kappa$ such that for all $ \beta \in Y$ we have $o(\beta) > \mu_{\xi}^{++}$.   

For each $ \xi < cf(\alpha)$, let 
\begin{gather*} B_{\xi} = \{ \beta < \kappa \mid  o(\beta) > \mu_{\xi}^{++} \}.
\end{gather*} 

Then $\langle B_{\xi} \mid \xi < cf(\alpha) \rangle $ is a sequence of cofinal subsets of $\kappa$ such that 
\begin{gather*} \forall \xi  \Big( \xi < \zeta < cf(\alpha) \ \longrightarrow  B_{\zeta} \subseteq B_{\xi} \Big).
\end{gather*} 

From the fact that $ cf(\alpha) > \kappa$, it follows that the sequence  $\langle B_{\xi} \mid \xi < cf(\alpha) \rangle $ is eventually constant, i.e., there is some $B \subseteq \kappa $ such that for all sufficiently large $\xi < cf(\alpha)$ we have $B=B_{\xi}$.  We have that $B$ is cofinal in $\kappa$ and for all $\beta \in B $ we have $o(\beta) \geq \alpha$, which verifies the corollary. 
\end{proof}

\begin{cora}Assume $(\Delta)$. $\kappa \ \text{is a tall cardinal } $ if and only if \begin{gather*}  \kappa \ \text{is a strong cardinal or a measurable limit of strong cardinals.} \end{gather*} 
\end{cora}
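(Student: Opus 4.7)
The plan is to derive Corollary A from Corollary \ref{maincor} together with the characterization that, in an iterable extender model $L[E]$ below a Woodin cardinal, $\kappa$ is a strong cardinal if and only if $o(\kappa)>\mu$ for every ordinal $\mu$. One direction of this characterization is immediate from Lemma \ref{TallMeas}: if $\kappa$ is strong then $\kappa$ is $\mu$-strong for every $\mu$ of cofinality $>\kappa$, so $o(\kappa)>\mu$ for every such $\mu$ (and hence for every $\mu$). The other direction follows from $\lambda$-indexing and iterability: if $o(\kappa)>\mu$ for every ordinal $\mu$, then for any $\alpha$ one can find $\beta$ on the $E$-sequence with $\crit(E_\beta)=\kappa$ and $\beta$ so large that $\lambda(E_\beta)>\alpha$, and the ultrapower $\Ult(L[E],E_\beta)$ agrees with $L[E]$ up to $\lambda(E_\beta)$, thus witnessing that $\kappa$ is $\alpha$-strong.

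For the direction $(\Leftarrow)$, strong cardinals are trivially tall. If $\kappa$ is a measurable limit of strong cardinals, then $\kappa$ is measurable, so $o(\kappa)>\kappa^+$ by Lemma \ref{TallMeas}; moreover, by the characterization each strong $\nu<\kappa$ satisfies $o(\nu)>\mu$ for every ordinal $\mu$, so $\{\nu<\kappa\mid o(\nu)>\mu\}$ is cofinal in $\kappa$ for every $\mu$. Theorem \ref{PropI} then yields that $\kappa$ is $\mu$-tall for every $\mu$, i.e., $\kappa$ is tall.

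For $(\Rightarrow)$, assume $\kappa$ is tall. Then $\kappa$ is measurable and Lemma \ref{TallMeas} gives $o(\kappa)>\kappa^+$. If $o(\kappa)>\mu$ for every ordinal $\mu$, the characterization shows that $\kappa$ is strong. Otherwise $o(\kappa)\le\mu_0$ for some ordinal $\mu_0$, and we show that $\kappa$ is a limit of strong cardinals. Fix $\nu_0<\kappa$ and suppose towards a contradiction that no strong cardinal lies in $(\nu_0,\kappa)$. By the characterization, $\nu\mapsto o(\nu)$ is an $\ord$-valued function on $(\nu_0,\kappa)$; since its domain has size $\le\kappa$, its range is bounded by some ordinal $\mu^*$. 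Pick a limit cardinal $\alpha$ with $\cf(\alpha)>\kappa$ and $\alpha>\max\{\mu^*,\mu_0\}$. Since $\kappa$ is $<\alpha$-tall, Corollary \ref{maincor} applies; the first alternative $o(\kappa)\ge\alpha$ is ruled out by $o(\kappa)\le\mu_0<\alpha$, so it forces $\kappa=\sup\{\beta<\kappa\mid o(\beta)\ge\alpha\}$. Then some $\beta\in(\nu_0,\kappa)$ satisfies $o(\beta)\ge\alpha>\mu^*$, contradicting the choice of $\mu^*$.

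The principal ingredient is Corollary \ref{maincor}; the main obstacle is the cardinality argument in the last step, which relies on the fact that an $\ord$-valued function on the set $(\nu_0,\kappa)$ of size $\kappa$ must have bounded range, so that a single sufficiently large $\alpha$ extracts a strong cardinal from $(\nu_0,\kappa)$.
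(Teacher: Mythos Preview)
Your proof is correct and follows the route the paper intends: the paper places Corollary~A immediately after Corollary~\ref{maincor} with no further argument, leaving the reader to combine that corollary with the standard characterization of strong cardinals in $L[E]$ (namely, $\kappa$ is strong iff $\{\beta\mid\crit(E_\beta)=\kappa\}$ is unbounded in $\ord$) and a Replacement-based bounding of $\nu\mapsto o(\nu)$ on $(\nu_0,\kappa)$. Your write-up supplies exactly those suppressed details.
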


We will need one further notion of iterability for Theorem~B. 

\begin{definition}
	We say that a premouse $\cM$ is \emph{weakly iterable} if every countable premouse $\bar{\cM}$ that elementarily embeds into $\cM$ is $(\omega_{1}+1,\omega_{1})$-iterable\footnote{See \cite[p.309]{MR1876087} for the definition of $(\omega_{1}+1,\omega_{1})$-iterable.}.
\end{definition}

The following lemma together with Theorem A imply Theorem B.

\begin{lemma}\label{Standard} Suppose there is no inner model with a Woodin cardinal. Let $L[E]$ be a proper class premouse that is weakly iterable. Then $L[E]$ is self-iterable and $L[E]\models (\Delta)$.
\end{lemma}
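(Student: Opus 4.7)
The plan is to prove $L[E]\models (\Delta)$ in stages, and to derive self-iterability along the way. First I would dispose of the easy parts: $(V = L[E])^{L[E]}$ is trivial, and non-existence of an inner model with a Woodin cardinal passes down to $L[E]$ because every inner model computed inside $L[E]$ is also an inner model of $V$. So the substantive work is (i) show that $L[E]$ (as a proper class premouse in $V$) is iterable, and (ii) show that the same iterability holds internally to $L[E]$, both for initial segments $L[E]|\alpha$ (self-iterability) and for $L[E]$ viewed as a proper class inside $L[E]$.

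For (i), the standard countable reflection argument works. Given a stack of normal iteration trees $\vec{\Tt}$ on $L[E]$ (or on some $L[E]|\alpha$), fix a sufficiently large regular $\theta$, take a countable $X \prec H_\theta$ containing $\vec{\Tt}$, and let $\pi \colon \bar N \to X$ invert the Mostowski collapse. Setting $\bar{\cM} = \pi^{-1}(L[E])$ and $\bar{\vec{\Tt}} = \pi^{-1}(\vec{\Tt})$, the map $\pi \restriction \bar{\cM} \colon \bar{\cM} \to L[E]$ is elementary and $\bar{\cM}$ is countable, so by weak iterability $\bar{\cM}$ is $(\omega_1{+}1,\omega_1)$-iterable. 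The resulting cofinal wellfounded branches at limit stages of $\bar{\vec{\Tt}}$, and the wellfoundedness of the direct limit in the stack clause, can then be copied up via $\pi$ to give the corresponding data for $\vec{\Tt}$. The uniqueness of cofinal wellfounded branches under no inner model with a Woodin cardinal (cited after the definition of iterability) is what guarantees that this copying is coherent and produces genuine witnesses.

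For (ii), I would first observe that weak iterability descends to $L[E]$: any countable premouse that embeds elementarily into some $L[E]|\alpha$ also embeds into $L[E]$, so it has an iteration strategy $\Sigma$ in $V$; because there is no inner model with a Woodin cardinal, $\Sigma$ selects the unique cofinal wellfounded branch at each limit, and the condition ``$b$ is the unique cofinal branch of $\bar{\Tt}$ whose direct limit is wellfounded'' is a lightface projective statement in the real coding $\bar{\Tt}$. Standard absoluteness (Shoenfield together with downward absoluteness of wellfoundedness) then places every such countable branch in $L[E]$, giving weak iterability of $L[E]|\alpha$ inside $L[E]$. Running the countable reflection argument of the previous paragraph inside $L[E]$ upgrades this to full iterability of every $L[E]|\alpha$ in $L[E]$, which is self-iterability; the same argument applied to the proper class premouse $L[E]$ in $L[E]$'s view gives $L[E]\models $ ``$L[E]$ is iterable.'' The principal obstacle throughout is to ensure that the cofinal wellfounded branches existing in $V$ actually lie in $L[E]$; without the uniqueness consequence of no inner model with a Woodin cardinal there would be no way to make this internal, since arbitrary branch choices could require information external to $L[E]$. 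Uniqueness elevates mere existence to a definable, and therefore internal, object, after which all remaining work is routine reflection and copying.
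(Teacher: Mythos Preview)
Your overall plan parallels the paper's, but there is a genuine gap at the step you call ``copy up via $\pi$.'' Suppose $\Tt$ has uncountable length. After collapsing $X \prec H_\theta$ to $\bar N$, you obtain a countable tree $\bar{\Tt}$ on $\bar{\cM}$ and, from weak iterability, a cofinal wellfounded branch $b$ of $\bar{\Tt}$. But $b$ is produced by an iteration strategy living outside $\bar N$, and there is no a priori reason for $b \in \bar N$; without this, elementarity of $\pi$ says nothing about $\Tt$ (note that $\bar N$'s first-order theory of $\bar{\Tt}$ is exactly $H_\theta$'s theory of $\Tt$, which is precisely what you are trying to establish). Nor can you push $b$ forward literally: $\pi[b]$ is a countable subset of $\lh(\Tt)$ and hence not cofinal. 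Uniqueness of $b$ in $V$ does not by itself place $b$ in $\bar N$. Your Shoenfield argument in (ii) is correct for putting branches of countable trees into $L[E]$, but Shoenfield needs the smaller model to contain $\omega_1$, so it does not apply between the countable hull $\bar N$ and $L[E]$; the same gap therefore reappears when you ``run the reflection argument inside $L[E]$.''

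The paper's sketch fills exactly this gap via $\cQ$-structures. Since there is no inner model with a Woodin cardinal, $\delta(\Tt)$ is not Woodin in $L[\cM(\Tt)]$, so one can form $\cQ(\Tt)$ and include it in the hull, giving $\overline{\cQ(\Tt)} \in \bar X$. The correct branch $b$ is then characterized by $\cQ(b,\bar{\Tt}) = \overline{\cQ(\Tt)}$, and the existence of such a $b$ is $\Sigma^1_1$ in real codes for $\bar{\Tt}$ and $\overline{\cQ(\Tt)}$ --- crucially not merely $\Sigma^1_2$, as ``has wellfounded direct limit'' alone would be. One forces with $Col(\omega,\nu)$ over $\bar X$ (where $\nu = |\bar{\Tt}|^{\bar X}$) to make these codes available as reals in $\bar X[G]$; $\Sigma^1_1$-absoluteness then gives $b \in \bar X[G]$, and homogeneity of the collapse gives $b \in \bar X$. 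Now elementarity of $\pi$ yields a branch of $\Tt$ in $H_\Theta^{L[E]}$. In short, the missing idea is to use the $\cQ$-structure to bring the complexity of branch existence down from $\Sigma^1_2$ to $\Sigma^1_1$, so that absoluteness over the countable hull applies.
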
 
\begin{proof}[Proof Sketch]
	Let $\Tt \in L[E]$ be an iteration  on $L[E]$ of limit lenght. By our hypothesis that there is no inner model with a Woodin cardinal it follows that\footnote{$\cM(\Tt)$ denotes the common part model, see \cite[Definition 6.9]{MR2768698}. For $\delta(\Tt)= \bigcup_{\alpha \in \lh(\Tt)} \nu_{\alpha}^{\Tt}$ and $\mathbb{E}:=\bigcup_{\alpha \in \lh(\Tt)} E^{\cM_{\alpha}^{\Tt}\restriction \nu_{\alpha}^{\Tt}}$, $\cM(\Tt):=J_{\delta}^{\mathbb{E}}$.} $$L[\cM(\Tt)] \models ``\delta(\Tt) \text{ is not a Woodin cardinal}".$$ 
	
	Let $\eta$ be the least ordinal such that $$(L[\cM(\Tt)]|\eta+\omega) \models ``\delta \text{ is not a Woodin cardinal}",$$ and set $\cQ(\Tt):=  L[\cM(\Tt)]||\eta$.  Let $X$ be a countable set such that $X \prec_{\Sigma_{\omega}} H_{\Theta}^{\cM}$, let $\bar{X} $ be the Mostowski collapse of $X$ and $\pi:\bar{X} \rightarrow X $ be the inverse of the Mostowski collapse. For each $w \in X$ we will denote by $\bar{w} $ the pre-image of $w$ under $\pi$. 
	
	Since $L[E]$ is weakly iterable, it follows that there exists $b$ a cofinal wellfounded branch of $\bar{\Tt}$ such that $\cM_{\infty}^{\bar{\Tt}} \triangleright \cQ(b,\bar{\Tt})=\overline{\cQ(\Tt)}$ and such $b$ is unique\footnote{See \cite[Definition 6.11]{MR2768698} for the definition of $\cQ(b,\bar{\Tt})$. }.
	
	Let $G$ be a $Col(\omega,\nu)$-generic over $\bar{X}$, where $(\nu=|\bar{\Tt}|)^{\bar{X}}$. As $\bar{X}[G] \prec_{\Sigma_{1}^{1}} V$, it follows that $b \in \bar{X}[G]$ and by homogeneity of $Col(\omega,\nu)$ it follows that $b \in X$. 
	
	Therefore $$ H_{\Omega}^{\cM}\models \exists c ~ ( c \text{ is  a cofinal well founded branch of } \Tt)$$
\end{proof}

\begin{thm}
Suppose there is no inner model with a Woodin cardinal and $L[E]$ is an extender model that is self-iterable. Let $\kappa$, $\mu$ be ordinals such that $\kappa < \mu$ and $\mu$ is a regular cardinal. 
If $L[E]|\mu$ is $\mu$-stable above $\kappa$, then $(\kappa \text{ is } \mu\text{-tall})^{L[E]}$ iff 
\begin{equation}
\begin{gathered} \label{EquivalenceEquationI} (o(\kappa)) > \mu)^{L[E]} \\ \text{or} \\ o(\kappa) > \kappa^{+} \wedge \kappa = \sup\{\nu < \kappa \mid o(\nu)>\mu\})^{L[E]}
\end{gathered}
\end{equation}
In particular, if $L[E]$ is weakly iterable, then $(\kappa \text{ is } \mu\text{-tall})^{L[E]}$ iff \eqref{EquivalenceEquationI} holds.
\end{thm}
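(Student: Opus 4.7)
The plan is to reduce Theorem~B to Theorem~A by relativizing everything to $L[E]$. The first step is to verify that $L[E] \models (\Delta)$. The axiom $V = L[E]$ holds trivially inside $L[E]$, and the non-existence of an inner model with a Woodin cardinal is downward absolute, since any inner model of $L[E]$ is also an inner model of $V$. For the iterability clause, self-iterability directly asserts that $L[E]$ believes every set-sized initial segment $L[E]|\alpha$ is iterable; since any iteration tree on the proper class premouse $L[E]$ is, by normality, a set whose extender indices are bounded, each such tree lives on some $L[E]|\alpha$ in the sense of Definition~\ref{DefLives}, so its iterability reduces to that of an initial segment.

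Next I would transfer the remaining hypotheses of Theorem~A to $L[E]$. Regularity of $\mu$ and the $\mu$-stability of $L[E]|\mu$ above $\kappa$ are first-order in the structure $\langle J_\mu^E, \in, E\restriction \omega\mu\rangle$, hence computed identically in $V$ and in $L[E]$. Theorem~A applied inside $L[E]$ then yields the desired equivalence between $(\kappa \text{ is } \mu\text{-tall})^{L[E]}$ and \eqref{EquivalenceEquationI}. For the \emph{in particular} clause, I would invoke Lemma~\ref{Standard}: under the no-Woodin hypothesis, weak iterability of $L[E]$ implies self-iterability (in fact yielding $L[E] \models (\Delta)$ directly), so the first assertion of the theorem applies.

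The main obstacle is the first step, namely confirming that self-iterability, which directly asserts only set-sized iterability of initial segments, suffices for $L[E]$ to internally satisfy the class-sized iterability demanded by $(\Delta)$. Once this is carefully verified, the entire argument of Theorem~A transfers verbatim to $L[E]$: Lemma~\ref{finite} ensures finiteness of the tree induced by a putative $\mu$-tall embedding $j \colon L[E] \to M$ inside $L[E]$, and the absoluteness observations above guarantee that the stability-based upper bounds of Section~\ref{Section3}, and in particular the key inductive Claim~\ref{bound}, continue to hold from the $L[E]$ perspective.
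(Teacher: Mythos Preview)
Your proposal is correct and follows essentially the same approach as the paper: the paper's proof simply asserts that $L[E] \models (\Delta)$ and applies Theorem~A internally, then invokes Lemma~\ref{Standard} for the weak-iterability clause. You have merely unpacked the details the paper leaves implicit, including the absoluteness of the no-Woodin hypothesis, the passage from self-iterability of initial segments to internal iterability of the proper class $L[E]$, and the first-order nature of regularity and $\mu$-stability.
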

\begin{proof} The first part follows from the fact that $L[E] \models (\Delta)$ and Theorem A. The second part follows from Lemma \ref{Standard} and Theorem A. 
	\end{proof}

\begin{cor} Suppose there is no inner model with a Woodin cardinal and $L[E]$ is an extender model that is self-iterable. Let $\kappa$ be an ordinal. Then $ (\kappa \text{ is }\text{tall})^{L[E]}$ iff \begin{equation}\begin{gathered}\label{EquivalenceEquationII} (\kappa \text{ is a strong cardidnal} \\ \text{or} \\ \kappa \text{ is a measurable limit of strong cardinals})^{L[E]}.\end{gathered}\end{equation}
	In particular, if $L[E]$ is weakly iterable, then $(\kappa \text{ is tall})^{L[E]}$ iff \eqref{EquivalenceEquationII} holds.
	\end{cor}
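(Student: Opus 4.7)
The plan is to derive the Corollary from Theorem~B combined with Corollary~\ref{maincor}. By Theorem~B, both in the self-iterable case and (via Lemma~\ref{Standard}) in the weakly iterable case, whether $\kappa$ is tall in $L[E]$ is determined by the internal structure of $L[E]$, and inside $L[E]$ the hypothesis $(\Delta)$ holds. So I will argue inside $L[E]$, freely invoking Theorem~A, Corollary~\ref{maincor}, Lemma~\ref{TallMeas}, Theorem~\ref{Hamkins} and Theorem~\ref{PropI}.

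For the direction ($\Leftarrow$), if $\kappa$ is a strong cardinal then $\kappa$ is $\mu$-tall for every $\mu$, hence tall. If $\kappa$ is a measurable limit of strong cardinals, then by Lemma~\ref{TallMeas} we have $o(\kappa) > \kappa^{+}$, and since $o(\beta)$ is unbounded whenever $\beta$ is strong, the set $\{\beta < \kappa \mid o(\beta) > \mu\}$ is cofinal in $\kappa$ for every $\mu$; Theorem~\ref{PropI} then gives $\mu$-tallness for every $\mu$, so $\kappa$ is tall.

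For the direction ($\Rightarrow$), suppose $\kappa$ is tall. Applying the definition with $\mu = \kappa + 1$ produces an elementary embedding with critical point $\kappa$, so $\kappa$ is measurable. Next I would apply Corollary~\ref{maincor} to each limit cardinal $\alpha$ with $\cf(\alpha) > \kappa$: either $o(\kappa) \geq \alpha$, or $o(\kappa) > \kappa^{+}$ and $B_{\alpha} := \{\beta < \kappa \mid o(\beta) \geq \alpha\}$ is cofinal in $\kappa$. If the first alternative holds for a cofinal subclass of such $\alpha$, then $o(\kappa)$ is unbounded, so $\kappa$ is strong and we are done. Otherwise, fix $\alpha_{0}$ beyond which only the second alternative is ever available, so that $B_{\alpha}$ is cofinal in $\kappa$ for every admissible $\alpha \geq \alpha_{0}$.

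The main (mild) obstacle is to pass from ``each $B_{\alpha}$ is cofinal in $\kappa$'' to ``the class of strong cardinals below $\kappa$ is cofinal in $\kappa$''. I would argue by a pigeonhole: fix $\xi < \kappa$ and, for each admissible $\alpha \geq \alpha_{0}$, set $f(\alpha) := \min(B_{\alpha} \cap (\xi, \kappa))$. Since the $B_{\alpha}$ are $\subseteq$-decreasing in $\alpha$, $f$ is weakly increasing; since it takes values in $\kappa$, its range is order-isomorphic to an ordinal $\leq \kappa$. The admissible $\alpha$ form a cofinal subclass of $\ord$, so $f$ must be eventually constant at some $\beta^{*} \in (\xi, \kappa)$. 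For such $\beta^{*}$, we have $\beta^{*} \in B_{\alpha}$ for arbitrarily large $\alpha$, whence $o(\beta^{*})$ is unbounded and $\beta^{*}$ is strong. As $\xi < \kappa$ was arbitrary, $\kappa$ is a limit of strong cardinals; combined with measurability, $\kappa$ is a measurable limit of strong cardinals.
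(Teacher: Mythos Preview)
Your proof is correct and is essentially the argument the paper intends. The paper gives no explicit proof for this corollary: it is stated immediately after (the restatement of) Theorem~B and is meant to follow from Corollary~A, which in turn is stated without proof right after Corollary~\ref{maincor}. Your write-up simply makes these implicit steps explicit. The pigeonhole you run, via $f(\alpha)=\min(B_\alpha\cap(\xi,\kappa))$, is a mild repackaging of the stabilization argument the paper uses inside the proof of Corollary~\ref{maincor} (there the decreasing sequence $\langle B_\xi\rangle$ of cofinal subsets of $\kappa$ must stabilize once its length exceeds $\kappa$); the two arguments are interchangeable.

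One small point worth spelling out: you twice use that ``$o(\cdot)$ unbounded'' implies ``strong'' (for $\kappa$ in the first alternative, and for $\beta^*$ at the end). The paper does not isolate this as a lemma either, but it is the standard fact that in an iterable extender model, if $E_\beta$ with $\crit(E_\beta)=\kappa$ exists for arbitrarily large $\beta$, then for any cardinal $\mu$ one may choose such $\beta$ above $\mu^{++}$; the extender is then total on $L[E]$, and coherence gives $\Ult(L[E],E_\beta)|\beta=L[E]|\beta\supseteq V_\mu^{L[E]}$ with $j(\kappa)=\lambda(E_\beta)>\mu$, so $\kappa$ is $\mu$-strong. A one-line reference to this would tighten your argument.
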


Next we prove that we can not remove the hypothesis that $L[E]|\mu$ is $\mu$-stable above $\kappa$ in Theorem~A. For that we will use the following lemma.

\begin{lemma}[Lemma 2.3 in \cite{tall}] \label{Salvador} If $\kappa < \theta$ are ordinals and $j:V \longrightarrow M $ is an elementary embedding such that $ ^{\kappa}M \subseteq M$ and $j(\kappa) \geq \theta$, then 
$ \kappa \ \text{ is} \ \theta$-tall.
\end{lemma}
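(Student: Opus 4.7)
The plan is to use the standard extender derivation: from $j$, extract an extender of length $\theta$ and take its ultrapower to produce an embedding that directly witnesses $\theta$-tallness. Specifically, let $E = \langle E_a : a \in [\theta]^{<\omega}\rangle$ be the $(\kappa,\theta)$-extender derived from $j$, where $X \in E_a$ iff $a \in j(X)$ for $X \subseteq [\kappa]^{|a|}$ with $X \in V$. This is well defined because the hypothesis $\theta \leq j(\kappa)$ guarantees that each seed $a$ lies in $[j(\kappa)]^{<\omega}$. Form the ultrapower $i : V \to N := \Ult(V,E)$ with the canonical factor map $k : N \to M$ sending $[a,f]_E \mapsto j(f)(a)$, so that $k \circ i = j$ and $k \restriction \theta = \id$; the latter holds because each $\alpha < \theta$ is represented in $N$ by $[\{\alpha\},\id]_E$, which $k$ sends to $j(\id)(\alpha) = \alpha$.

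Having set up $i$, I would verify the three conditions for $\theta$-tallness. Well-foundedness of $N$ is immediate from the existence of the embedding $k$ into $M$, and $\crit(i) = \kappa$ follows from the $\kappa$-completeness of each $E_a$ together with the fact that $\kappa$ itself cannot be fixed (as $\kappa \notin j(X)$ for any $X \subseteq \kappa$ with $X \in V$). For the second condition, from $k(i(\kappa)) = j(\kappa) \geq \theta$ combined with $k \restriction \theta = \id$, one reads off $i(\kappa) \geq \theta$: otherwise $i(\kappa) < \theta$ would be fixed by $k$, forcing $j(\kappa) < \theta$.

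The heart of the lemma is verifying $^\kappa N \subseteq N$. Given $\vec x = \langle x_\alpha : \alpha < \kappa\rangle \in V$ with each $x_\alpha \in N$, choose representations $x_\alpha = i(f_\alpha)(a_\alpha)$ with $a_\alpha \in [\theta]^{<\omega}$ and $f_\alpha \in V$; the sequences $\vec a = \langle a_\alpha : \alpha < \kappa\rangle$ and $\vec f = \langle f_\alpha : \alpha < \kappa\rangle$ then both lie in $V$. The plan is to amalgamate the individual representatives into a single extender representative for $\vec x$: one chooses a common seed $b \in [\theta]^{<\omega}$ large enough to code each $a_\alpha$ uniformly, defines a single function $F$ on $[\kappa]^{|b|}$ so that $F$ recovers the pair $(f_\alpha,a_\alpha)$ from the index $\alpha$, and verifies $i(F)(b) = \vec x$ in $N$. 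The hypothesis $^\kappa M \subseteq M$ enters precisely here: it guarantees that the amalgamation data $\langle j(f_\alpha)(a_\alpha) : \alpha < \kappa\rangle = \langle k(x_\alpha) : \alpha < \kappa\rangle$ already lives in $M$, so that a genuine extender representative for $\vec x$ can be built.

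The main obstacle is precisely this amalgamation step. The technical difficulty is that the seeds $a_\alpha$ may vary unboundedly through $[\theta]^{<\omega}$, so choosing the common seed $b$ and the coding function $F$ requires care; one must then invoke Łoś's theorem together with $\kappa$-completeness of the ultrafilters $E_a$ and the $\kappa$-closure of $M$ to check that $i(F)(b)$ in $N$ really coincides with $\vec x$. Once this is done, $i : V \to N$ satisfies all three clauses of the definition of $\theta$-tall, completing the proof.
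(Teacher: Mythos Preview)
Your extender-derivation approach has a genuine gap: it only yields $i(\kappa) \geq \theta$, whereas the definition of $\theta$-tall in this paper requires the \emph{strict} inequality $i(\kappa) > \theta$. The hypothesis of the lemma is merely $j(\kappa) \geq \theta$, so the edge case $j(\kappa) = \theta$ is live, and in that case your construction can fail outright. Concretely, if $j$ is already the ultrapower by a $(\kappa,\theta)$-extender (for instance $j = i_U$ for a normal measure $U$ on $\kappa$ and $\theta = j(\kappa)$), then the $(\kappa,\theta)$-extender you derive from $j$ is equivalent to the original one, the factor map $k$ is the identity, and you recover $i = j$ with $i(\kappa) = \theta$ exactly. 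So the embedding $i$ you produce does not witness $\theta$-tallness.

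The paper's argument is different and is designed precisely to manufacture the strict inequality. Since $\crit(j) = \kappa$ (implicit in the statement and in your sketch as well), $\kappa$ is measurable, and by elementarity $j(\kappa)$ is measurable in $M$. Take $U \in M$ a normal measure on $j(\kappa)$ and let $i : M \to \Ult(M,U)$ be the internal ultrapower. Then $(i \circ j)(\kappa) = i(j(\kappa)) > j(\kappa) \geq \theta$, giving the strict inequality for free. The closure $^{\kappa}\Ult(M,U) \subseteq \Ult(M,U)$ follows because any $\kappa$-sequence of representing functions lies in $M$ (by $^{\kappa}M \subseteq M$), and $M$ believes its ultrapower is $j(\kappa)$-closed. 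Your extender argument for $\kappa$-closure is fine in spirit, but it is solving the wrong problem: the content of the lemma is bridging $\geq$ to $>$, and that requires the extra ultrapower, not just a derived-extender factorization.
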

\begin{proof}[Proof sketch] By elementarity of $j$ it follows that $j(\kappa)$ is measurable in $M$, let $U \in M$ be a total measure on $M$ with $crit(U) = j(\kappa)$ and $i$ the ultrapower embedding from $U$, then $i\circ j$ witness that $\kappa$ is $\theta$-tall.
\end{proof}

\begin{lemma}\label{CES}
	Suppose $(\Delta)$ .  Let $ \kappa $ and $\lambda$  be cardinals. If  \begin{enumerate}
		\item [(I)] \begin{enumerate}
			\item[1.] $\kappa < \lambda$, 
			\item[2.] $o(\kappa) \in [\lambda ,\lambda^{+}) $, 
			\item[3.] for some $\mu$, $\kappa < cf(\mu)$ and
			\item[4.] $\cf(\mu)$ is a measurable cardinal. 
		\end{enumerate} 
	\end{enumerate}	or	\begin{enumerate}
		\item[(II)] \begin{enumerate}
			\item[1.] $\kappa$ is a measurable cardinal, 
			\item[2.] $\lambda > \kappa$ and $\cf(\lambda) = \kappa$,
			\item[3.] $\kappa = \sup(\{\beta <\kappa \mid o(\beta) > \lambda \})$, 
		\end{enumerate} 
	\end{enumerate}
	
	Then $\kappa$ is $\lambda^{+}$-tall.
\end{lemma}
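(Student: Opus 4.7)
In both cases the plan is to construct an elementary embedding $j\colon V \to M$ with $\crit(j) = \kappa$, ${}^{\kappa}M \subseteq M$ and $j(\kappa) \geq \lambda^{+}$; Lemma~\ref{Salvador} then delivers that $\kappa$ is $\lambda^{+}$-tall. In both cases $j$ will be the composition of an ultrapower by an appropriate measure with a subsequent extender ultrapower, and the decisive computation producing the boost past $\lambda^{+}$ is the one underlying Remark~\ref{rmk1}(b): whenever $W$ is a total measure with $\crit(W) = \cf(\lambda)$, then $\gch$ in $L[E]$ (from acceptability) gives $|i_{W}(\lambda)|^{V} = \lambda^{\cf(\lambda)} = \lambda^{+}$, so $i_{W}(\lambda) \geq \lambda^{+}$. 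This is exactly the failure-of-stability pattern of Remark~\ref{rmk1}(b), which is why the present lemma exhibits situations where the stability hypothesis of Theorem~A cannot be dropped.

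\textbf{Case (I).} I would instantiate $\mu = \lambda$, so that $\lambda$ is singular with $\nu := \cf(\lambda) > \kappa$ measurable. From $o(\kappa) \in [\lambda, \lambda^{+})$ and Lemma~\ref{Oo}, choose an extender $F = E_{\alpha}^{L[E]}$ with $\crit(F) = \kappa$ and $\alpha \in (\lambda, \lambda^{+})$; such $\alpha$ exists because $\lambda$, being a limit cardinal, is never an extender index, and then $\lambda(F) \geq \lambda$ (since $\lambda$ is a cardinal in $\Ult(L[E]||\bar\alpha, F)$ by the standard downward-absoluteness argument, while $\lambda(F)^{+\Ult} = \alpha > \lambda$). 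Let $W$ be the total measure on $\nu$ from the $E$-sequence and form $i_{W}\colon V \to N$. By the cofinality computation above, $i_{W}(\lambda) \geq \lambda^{+}$, hence $i_{W}(\lambda(F)) \geq \lambda^{+}$ by monotonicity. Now $i_W(F)$ is an extender in $N$'s sequence with $\crit = \kappa$ and $\lambda(i_W(F)) = i_W(\lambda(F))$; let $i_{i_{W}(F)}\colon N \to M$ and set $j := i_{i_{W}(F)} \circ i_{W}$. Then $\crit(j) = \kappa$ and $j(\kappa) = i_{W}(\lambda(F)) \geq \lambda^{+}$, while ${}^{\kappa}M \subseteq M$ holds because ${}^{\nu}N \subseteq N$ (measure ultrapower) and $M$ is $\kappa$-closed over $N$ (extender ultrapower with critical point $\kappa$).

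\textbf{Case (II).} Let $U$ be a total measure on $\kappa$ from the $L[E]$-sequence and $i_{U}\colon V \to N$. Since $\crit(U) = \kappa = \cf(\lambda)$, the same computation yields $i_{U}(\lambda) \geq \lambda^{+}$. By hypothesis the set $A := \{\beta < \kappa \mid o(\beta) > \lambda\}$ is unbounded in $\kappa$, so by elementarity $i_{U}(A) = \{\beta < i_{U}(\kappa) \mid o^{N}(\beta) > i_{U}(\lambda)\}$ is unbounded in $i_{U}(\kappa)$; pick $\beta^{*} \in i_{U}(A)$ with $\beta^{*} > \kappa$, possible because $i_U(\kappa) > \kappa$. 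Since $o^{N}(\beta^{*}) > i_{U}(\lambda) \geq \lambda^{+}$, Lemma~\ref{Oo} applied in $N$ yields an extender $F^{*}$ in $N$'s sequence with $\crit(F^{*}) = \beta^{*}$ and $\lambda(F^{*}) \geq i_{U}(\lambda) \geq \lambda^{+}$ (again using that $i_U(\lambda)$ is a cardinal in the ultrapower). Let $i_{F^{*}}\colon N \to M$ and put $j := i_{F^{*}} \circ i_{U}$. Then $\crit(j) = \kappa$; since $i_{U}(\kappa) > \beta^{*} = \crit(F^{*})$, monotonicity gives $j(\kappa) = i_{F^{*}}(i_{U}(\kappa)) > i_{F^{*}}(\beta^{*}) = \lambda(F^{*}) \geq \lambda^{+}$. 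The closure ${}^{\kappa}M \subseteq M$ follows from ${}^{\kappa}N \subseteq N$ and $M$ being $\beta^{*}$-closed (hence $\kappa$-closed) over $N$.

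\textbf{Main obstacle.} The pivotal ingredient in each case is the discontinuity estimate $i(\lambda) \geq \lambda^{+}$, which requires the critical point of the measure being applied to coincide with $\cf(\lambda)$; this is precisely what each of hypotheses (I) and (II) arranges, and it is exactly the configuration of Remark~\ref{rmk1}(b) witnessing that $L[E]|\lambda^{+}$ is not $\lambda^{+}$-stable above $\kappa$. A secondary technical point is justifying $\lambda(F) \geq \lambda$ in Case~(I) (resp.\ $\lambda(F^{*}) \geq i_U(\lambda)$ in Case~(II)), which relies on $\lambda$ (resp.\ $i_U(\lambda)$) remaining a cardinal in the relevant ultrapower because ultrapowers live inside $V$. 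Once these estimates are in hand the remainder is routine composition and an appeal to Lemma~\ref{Salvador}.
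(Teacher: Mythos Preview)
Your overall strategy matches the paper's exactly: in each case build $j$ as a two-step composition (a measure ultrapower followed by an extender ultrapower) with $\crit(j)=\kappa$ and $j(\kappa)\ge\lambda^{+}$, then invoke Lemma~\ref{Salvador}. Case~(I) is essentially the paper's argument; you locate the second extender as $i_W(F)$ for a specific $F$ chosen in $V$, whereas the paper picks an extender directly on $\cN$'s sequence indexed above $\lambda^{+}$, but these come to the same thing.

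There is, however, a gap in your closure argument in Case~(II). You assert that ``$M$ is $\beta^{*}$-closed (hence $\kappa$-closed) over $N$'' simply because the second ultrapower is by an extender with critical point $\beta^{*}$. That inference is not valid in general: for an extender $E$ with critical point $\beta$ and support $\lambda(E)$, the ultrapower $\Ult(N,E)$ need not be closed under $\beta$-sequences, nor even under $\kappa$-sequences for $\kappa<\beta$. The obstruction is that a $\kappa$-sequence of generators $\langle a_\xi\mid\xi<\kappa\rangle$ with $a_\xi\in[\lambda(E)]^{<\omega}$ may fail to lie in the ultrapower; for instance, if $\cf^{N}(\lambda(E))\le\kappa$ then a cofinal $\kappa$-sequence in $\lambda(E)$ lies in $N$ but not in $\Ult(N,E)$, since the latter believes $\lambda(E)=j_E(\beta)$ is regular. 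This is precisely why the paper does \emph{not} use the full extender in Case~(II): it truncates, setting $F:=E_{\gamma}\restriction\lambda^{+}$, so that the support of $F$ has cofinality $\lambda^{+}>\kappa$; combined with $\cN^{\kappa}\subseteq\cN$ this is what delivers ${}^{\kappa}\cW\subseteq\cW$. Your Case~(II) is easily repaired by the same truncation. The same caveat applies to your one-line closure justification in Case~(I), though the paper is equally terse there.

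A smaller point: your derivation of the pivotal estimate $i_{W}(\lambda)\ge\lambda^{+}$ via ``$|i_{W}(\lambda)|^{V}=\lambda^{\cf(\lambda)}=\lambda^{+}$'' only yields the upper bound $|i_{W}(\lambda)|\le\lambda^{+}$; it does not by itself give the lower bound you need. The paper also asserts $\pi(\lambda)\ge\lambda^{+}$ (and $\pi(\lambda)>\lambda^{+V}$ in Case~(II)) without proof, so this is not a point of divergence between your argument and the paper's, but the justification you wrote is not the reason the inequality holds.
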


\begin{proof} $\blacktriangleright$ Suppose (I) holds. By Lemma \ref{TallMeas} there is  $E_{\beta}$ a total measure with $\crit(E_{\beta})=\cf(\lambda)$. Consider $\cN= \Ult(V,E_{\beta})$ and let $\pi$ be the ultrapower map.
	
	We have $\pi(\lambda) \geq \lambda^{+}$, which implies 
	$$\cN\models o(\kappa)  \in [\pi(\lambda),\pi(\lambda^{+V})] \subseteq [\lambda^{+V},\pi(\lambda)^{+\cN})$$
	
	Let $E_{\alpha}^{\cN}$ be such that $\alpha > \lambda^{+V}$ and $\crit(E_{\alpha}^{\cN})=\kappa$.
	Let $\cW:=Ult(\cN,E_{\alpha}^{\cN})$ and let $i:\cN \rightarrow \cW$ be the ultrapower map. 
	It follows that $i\circ \pi(\kappa) \geq \lambda^{+V}$ and $\cW^{\kappa}\subseteq \cW$. 
	Hence by Lemma \ref{Salvador} $\kappa$ is $\lambda^{+}$-tall. 
	
	$\blacktriangleright$ Suppose (II) holds. By Lemma \ref{TallMeas} there is $E_{\beta}$ a total measure with $\crit(E_{\beta}) = \kappa$. Consider $\cN= \Ult(V,E_{\beta})$ and let $\pi:V \rightarrow \cN$ be the ultrapower map. 
	
	We have \begin{gather} \label{equation3} \cN\models \sup(\{ \beta < \pi(\kappa) \mid o(\beta) \geq \pi(\lambda)\}) = \pi(\kappa)
	\end{gather}  
	
	Notice that $\pi(\lambda) > \lambda^{+V}$. By \eqref{equation3} and Lemma \ref{IS} there is $\gamma \in (\lambda^{+V},\pi(\lambda))$ such that $\crit(E^{\cN}_{\gamma}) \in (\kappa,\pi(\kappa))$.
	
	If we consider $F:= E_{\gamma}\restriction \lambda^{+}$, as $\cf(\lambda^{+V})= \lambda^{+V} > \kappa$ and  $\cN^{\kappa}\subseteq \cN$, it follows that $\cW:= \Ult(\cN,F)$ is $\kappa$-closed. 
	
	We also have for $i:\cN\rightarrow \cW$, the ultrapower map, that $i \circ \pi (\kappa) \geq \lambda^{+V}$.
	Hence by Lemma \ref{Salvador} $\kappa$ is $\lambda^{+}$-tall.
	
	\end{proof}

\begin{remark}
	Notice that in Lemma \ref{CES} setting $\mu=\lambda^{+}$ it follows that $L[E]| \mu$ is not $\mu$-stable above $\kappa$, as $\lambda$ is the largest cardinal of $L[E]|\mu$ and $cf(\lambda) $ is a measurable $> \kappa$.
\end{remark}

\section{Acknowledgments}

The authors express their gratitude to Tanmay Inamdar for reading earlier versions of this paper and providing many helpful comments and suggestions.
The authors express their gratitude to the referee for a thorough reading and valuable report.

\bibliographystyle{jflnat}

\begin{thebibliography}{Ham09}
	
	\bibitem[GM96]{MR1423421}
	Moti Gitik and William~J. Mitchell.
	\newblock Indiscernible sequences for extenders, and the singular cardinal
	hypothesis.
	\newblock {\em Ann. Pure Appl. Logic}, 82(3):273--316, 1996.
	
	\bibitem[GSS06]{Pcf+Woodin}
	Moti Gitik, Ralf Schindler, and Saharon Shelah.
	\newblock {\em PCF theory and Woodin cardinals}, page 172–205.
	\newblock Lecture Notes in Logic. Cambridge University Press, 2006.
	
	\bibitem[Ham09]{tall}
	Joel~D Hamkins.
	\newblock Tall cardinals.
	\newblock {\em Mathematical Logic Quarterly}, 55(1):68--86, 2009.
	
	\bibitem[JS13]{Knm}
	Ronald Jensen and John Steel.
	\newblock K without the measurable.
	\newblock {\em The Journal of Symbolic Logic}, 78(3):708--734, 2013.
	
	\bibitem[Kan09]{Kana}
	Akihiro Kanamori.
	\newblock {\em The higher infinite}.
	\newblock Springer Monographs in Mathematics. Springer-Verlag, Berlin, second
	edition, 2009.
	\newblock Large cardinals in set theory from their beginnings, Paperback
	reprint of the 2003 edition.
	
	\bibitem[Mag76]{MR429566}
	Menachem Magidor.
	\newblock How large is the first strongly compact cardinal? or {A} study on
	identity crises.
	\newblock {\em Ann. Math. Logic}, 10(1):33--57, 1976.
	
	\bibitem[MS94]{MR1224594}
	D.~A. Martin and J.~R. Steel.
	\newblock Iteration trees.
	\newblock {\em J. Amer. Math. Soc.}, 7(1):1--73, 1994.
	
	\bibitem[Sch02]{SchALCM}
	Ralf Schindler.
	\newblock The core model for almost linear iterations.
	\newblock {\em Annals of Pure and Applied Logic}, 116(1-3):205--272, 2002.
	
	\bibitem[Sch06]{VM}
	Ralf Schindler.
	\newblock Iterates of the core model.
	\newblock {\em The Journal of Symbolic Logic}, 71(1):241--251, 2006.
	
	\bibitem[Sch13]{FarmerThesis}
	Farmer Schlutzenberg.
	\newblock Measures in mice.
	\newblock {\em arXiv preprint arXiv:1301.4702}, 2013.
	
	\bibitem[Ste10]{MR2768698}
	John~R. Steel.
	\newblock An outline of inner model theory.
	\newblock In {\em Handbook of set theory. {V}ols. 1, 2, 3}, pages 1595--1684.
	Springer, Dordrecht, 2010.
	
	\bibitem[SW16]{HODasK}
	John~R Steel and W~Hugh Woodin.
	\newblock Hod as a core model.
	\newblock In {\em Ordinal definability and recursion theory: the Cabal
		Seminar}, volume~3, pages 257--346, 2016.
	
	\bibitem[SZ10]{MR2768688}
	Ralf Schindler and Martin Zeman.
	\newblock Fine structure.
	\newblock In {\em Handbook of set theory. {V}ols. 1, 2, 3}, pages 605--656.
	Springer, Dordrecht, 2010.
	
	\bibitem[Zem02]{MR1876087}
	Martin Zeman.
	\newblock {\em Inner models and large cardinals}, volume~5 of {\em De Gruyter
		Series in Logic and its Applications}.
	\newblock Walter de Gruyter \& Co., Berlin, 2002.
	
\end{thebibliography}

\end{document}